\renewcommand*{\backref}[1]{}
\renewcommand*{\backrefalt}[4]{\quad \tiny
  \ifcase #1 (\textbf{NOT CITED.})%
  \or    (Cited on page~#2.)%
  \else   (Cited on pages~#2.)%
  \fi}
\def\MRbibitem{\@ifnextchar[\my@lbibitem\my@bibitem}
\def\mybiblabel#1#2{\@biblabel{{\hyperref{http://www.ams.org/mathscinet-getitem?mr=#1}{}{}{#2}}}}
\def\myhyperanchor#1{\Hy@raisedlink{\hyper@anchorstart{cite.#1}\hyper@anchorend}}
\def\my@lbibitem[#1]#2#3#4\par{%
  \item[\mybiblabel{#2}{#1}\myhyperanchor{#3}\hfill]#4%
  \@ifundefined{ifbackrefparscan}{}{\BR@backref{#3}}%
  \if@filesw{\let\protect\noexpand\immediate
    \write\@auxout{\string\bibcite{#3}{#1}}}\fi\ignorespaces%
}
\def\my@bibitem#1#2#3\par{%
  \refstepcounter\@listctr
  \item[\mybiblabel{#1}{\the\value\@listctr}\myhyperanchor{#2}\hfill]#3%
  \@ifundefined{ifbackrefparscan}{}{\BR@backref{#2}}%
  \if@filesw\immediate\write\@auxout
    {\string\bibcite{#2}{\the\value\@listctr}}\fi\ignorespaces%
}
\declaretheorem{theorem}
\declaretheorem[name=Acknowledgement, style=remark, numbered=no]{ack}
\numberwithin{equation}{section}     
\setlist[enumerate,1]{label={\upshape(\alph*)},ref=\alph*}
\setlist[enumerate,2]{label={\upshape(\arabic*)},ref=\arabic*}
\newcommand{\R}{\mathbb{R}}
\newcommand{\Z}{\mathbb{Z}}
\newcommand{\N}{\mathbb{N}}
\newcommand{\cF}{\mathcal{F}}
\newcommand{\cL}{\mathcal{L}}
\newcommand{\cM}{\mathcal{M}}\newcommand{\cO}{\mathcal{O}}
\newcommand{\cU}{\mathcal{U}}
\newcommand{\cZ}{\mathcal{Z}}
\newcommand{\sA}{\mathscr{A}}
\newcommand{\C}{\mathbb{C}}
\renewcommand{\N}{\mathbb{N}}
\renewcommand{\R}{\mathbb{R}}
\renewcommand{\Z}{\mathbb{Z}}
\newtheorem{prop}{Proposition}[section]
\newtheorem{coro}[prop]{Corollary}
\newtheorem{lemm}[prop]{Lemma}
\newtheorem{fact}[prop]{Fact}
\newtheorem{prob}{Problem}
\newtheorem{claim}{Claim}
\theoremstyle{definition}
\theoremstyle{remark}
\newtheorem{rema}[prop]{Remark}
\renewcommand{\epsilon}{\varepsilon}
\renewcommand{\emptyset}{\varnothing}
\DeclareFontFamily{OMX}{MnSymbolE}{}
\DeclareSymbolFont{MnLargeSymbols}{OMX}{MnSymbolE}{m}{n}
\DeclareFontShape{OMX}{MnSymbolE}{m}{n}{
    <-6>  MnSymbolE5
   <6-7>  MnSymbolE6
   <7-8>  MnSymbolE7
   <8-9>  MnSymbolE8
   <9-10> MnSymbolE9
  <10-12> MnSymbolE10
  <12->   MnSymbolE12
}{}
\DeclareFontShape{OMX}{MnSymbolE}{b}{n}{
    <-6>  MnSymbolE-Bold5
   <6-7>  MnSymbolE-Bold6
   <7-8>  MnSymbolE-Bold7
   <8-9>  MnSymbolE-Bold8
   <9-10> MnSymbolE-Bold9
  <10-12> MnSymbolE-Bold10
  <12->   MnSymbolE-Bold12
}{}
\let\llangle\@undefined
\let\rrangle\@undefined
\DeclareMathDelimiter{\llangle}{\mathopen}%
                     {MnLargeSymbols}{'164}{MnLargeSymbols}{'164}
\DeclareMathDelimiter{\rrangle}{\mathclose}%
                     {MnLargeSymbols}{'171}{MnLargeSymbols}{'171}
\begin{document}
\title[A real version of Makarov and Smirnov's
formalism]{Thermodynamic formalism of interval maps for upper
semi-continuous potentials:~~~Makarov and Smirnov's formalism}
\date{\today}

\author{Yiwei Zhang}

\begin{thanks}
{Y.Z.\ was supported by project Fondecyt 3130622.}
\end{thanks}

\begin{abstract}
In this paper, we study the thermodynamic formalism of interval maps
$f$ with sufficient regularity, for a sub class $\mathcal{U}$
composed of upper semi-continuous potentials which includes both
H\"{o}lder and geometric potentials. We show that for a given $u\in
\mathcal{U}$ and negative values of $t$, the pressure function
$P(f,-tu)$ can be calculated in terms of the corresponding hidden
pressure function $\widetilde{P}(f,-tu)$. Determination of the
values $t\in(-\infty,0)$ at which $P(f,-tu)\neq
\widetilde{P}(f,-tu)$ is also characterized explicitly. When restricting to the H\"{o}lder continuous potentials, our result recovers Theorem B in \cite{LiRiv13b} for maps with non-flat critical points. While restricting to the geometric potentials, we develop a real version of
Makarov-Smirnov's formalism, in parallel
to the complex version shown in \cite[Theo A,B]{MakSmi00}. Moreover, our results also provide a simpler proof
(using \cite[Coro6.3]{Rue92}) of the original Makarov-Smirnov's
formalism in the complex setting, under an additional assumption
about non-exceptionality, i.e., \cite[Theo3.1]{MakSmi00}.

\end{abstract}

\maketitle

\section{Introduction}
\subsection{Thermodynamic formalism and Hyperbolicity}
Thermodynamic formalism can be interpreted as a set of ideas and
techniques derived from statistical mechanics. A systematic study of
thermodynamic formalism of uniform hyperbolic smooth dynamical
systems originates from the pioneer works of Sinai, Bowen and Ruelle
\cite{Bow75,Rue76,Sin72}, and has various applications from
dimension theory to number theory. One of the outstanding result is
that the uniform hyperbolicity implies the real analyticity of the
topological pressure function with H\"{o}lder continuous potentials,
and thus yields the absence of phase transitions.

Recently, the extension of thermodynamic formalism results to one
dimensional non-uniform hyperbolic \footnote{The non-uniform
hyperbolicity is usually caused by the existence of critical points
and neutral cycles.} dynamical systems has attracted great
interests. Various direct or indirect approaches have been proposed
to compensate the lack of uniform hyperbolicity in the dynamical
system. Perhaps one of the most promising and direct methods is via
the demonstration of the hyperbolicity in the potential. Suppose $X$
is a compact metric space, and a continuous map $T:X\to X$. A upper
semi-continuous potential $\phi:X\to \R\cup\{-\infty\}$ is said to
be \emph{hyperbolic}, if there is an integer $n\geq1$ such that the
function $S_{n}(\phi):=\sum_{j=0}^{n-1}\phi\circ T^{j}$ satisfies
\begin{equation}\label{equ_hyperbolicity definition}
\sup_{X}\frac{1}{n}S_{n}(\phi)<P(T,\phi),~~\mbox{where $P(T,\phi)$
is the topological pressure.}
\end{equation}
The notation of hyperbolic potential 
has been extensively used in a number of works include the studies
of H\"{o}lder continuous potentials in both complex rational maps
\cite{Hay99,DPU96,DU91a,Prz90,InoRiv12,LiRiv13a} and real interval maps with
sufficient regularity \cite{LiRiv13a,LiRiv13b}. Let us mention that
these studies also include the classical result of Lasota and Yorke
\cite{LY73} where $f$ is assumed to be piecewise $C^{2}$ and
uniformly expanding, and $\phi=-\log|Df|$.

Comparing with the inducing schemes approaches (see the survey
\cite{IT13,SUZ11} and the references therein), demonstrations of the
hyperbolicity turn out to possess two advantages. On one hand, it is
shown in \cite[Theo A]{LiRiv13a} that every H\"{o}lder continuous
potential is hyperbolic provided that the interval map $f$ satisfies
the weak regularity assumptions, that is all periodic points of $f$
are hyperbolic repelling and for every critical value of $f$,
$$
\lim_{n\to\infty}|Df^{n}(v)|=+\infty.
$$
We also refer to \cite{InoRiv12} for a complex version of the above
property. On the other hand, hyperbolicity of the potentials
directly yields that the corresponding transfer operators is bounded
and quasi-compact in the real setting \cite{Kel85,LiRiv13b}, or is
quasi-periodic in the complex setting \cite{FLM83,Man83}
respectively. In both settings, this approach gives a good
understanding of equilibrium states and their statistical
properties.



Given a differentiable one-dimensional dynamical system $f:X\to X$,
hyperbolicity of the potentials also plays an important role in
studies on geometric potentials (i.e., $\phi:=-t\log|Df|,~t\in\R$).
Note that every upper semi-continuous potential $\phi:X\to
\R\cup\{-\infty\}$ has the following agreeable property (shown in
Lemma \ref{lem_uppersemi}): $\phi$ is hyperbolic if and only if
\begin{equation}\label{equ_hyperbolicityequilivalent}
\sup_{\nu\in\cM(T,X)}\int\phi d\nu<P(T,\phi),
\end{equation}
where $\cM(T,X)$ is the set of Borel $T$-invariant probability
measures.

Put
$$
\chi_{\inf}:=\inf_{\nu\in\cM(f,X)}\int\log|Df|d\nu,~~~~~~~~~
\chi_{\sup}:=\sup_{\nu\in\cM(f,X)}\int\log|Df|d\nu,
$$
and define
$$
t_{-}:=\inf\big\{t\in\R:P(f,-t\log|Df|)+t\chi_{\sup}(f)>0\big\};
$$
$$
t_{+}:=\sup\big\{t\in\R:P(f,-t\log|Df|)+t\chi_{\inf}(f)>0\big\}.
$$
In the view of \eqref{equ_hyperbolicityequilivalent}, potential
$-t\log|Df|$ is hyperbolic when $t\in(t_{-},t_{+})$. Using this
fact, Przytycki and Rivera-Letelier showed the real analyticity of
the pressure function $P(f,-t\log|Df|)$ at $(t_{-},t_{+})$ for
complex rational maps \cite{PRL11} and real interval maps
\cite{PRL13} respectively. We also highlight the work of Makarov and
Smirnov \cite{MakSmi00}. They established an elegant expression of
the pressure function at $t<0$ in terms of hidden pressure and
Lyapunov exponent of periodic points for arbitrary rational maps
\cite[Theo A]{MakSmi00}, and provided an explicit characterization
\cite[Theo B]{MakSmi00} at which the number $t_{-}$ is finite. Their
methods involve a refine study on the hyperbolicity of the
potential, and are closed related to a former paper by Ruelle
\cite{Rue92} (see also an unpublished note from Smirnov
\cite{Smi99}). To simplify the notation, we use a convention
\emph{Makarov-Smirnov's formalism} to stand for the statements of
Theorem A and Theorem B in \cite{MakSmi00} henceforth.


In this paper, we study further about the thermodynamic formalism
for a subclass of upper semi-continuous potentials. In particular,
we will generalize the studies on the hyperbolicity of H\"{o}lder
continuous potentials, and obtain an analogous Makarov-Smirnov's
formalism, but in the setting of interval maps with sufficient
regularity. We stress that the proof in \cite[Theo A, B]{MakSmi00}
using the Sobolev spaces to create spectral gap of the corresponding
transfer operator can not be directly applied to our interval
setting, as it heavily relies on the fact that a non-constant
complex rational map is \emph{open}, i.e., the open sets has open
images. However, interval maps are not open in general, so the
corresponding transfer operator is usually not invariant even under
the space of continuous functions (see \cite[Ex2.2]{RL15}). Instead,
our methodology will be closely related to the recent results
developed in \cite{InoRiv12,LiRiv13a,LiRiv13b}. The difference on
the proofs between real and complex setting will be discussed in
\S\ref{sec_further researches}.



Let us proceed with the exact definitions and statements.

\bigskip

\subsection{Precise statement}
We begin by briefly introducing the main objects. Let $I$ be a
compact interval in $\R$. For a differentiable map $f:I\to I$, a
point of $I$ is \emph{critical} if the derivative of $f$ vanishes at
it. We denote by $\mbox{Crit}(f)$ the set of critical points of $f$.
We also denote by $J(f)$ the \emph{Julia set}, which is the set of
$x\in I$ with the following property: for every neighborhood $V$ of
$x$, the family $\{f^{n}|_{V}\}_{n=0}^{\infty}$ is not
equi-continuous. Let $\mbox{Crit}'(f):=\mbox{Crit}(f)\cap J(f)$. Let
also $\mbox{Per}(f)$ be the set of periodic points.

In what follows, we denote by $\mathscr{A}$ the collection of all
non-injective differentiable maps $f:I\to I$ such that
\begin{itemize}
  \item The critical set is finite;
  \item $Df$ is H\"{o}lder continuous;
  \item Each critical point $c\in\mbox{Crit}(f)$ is \emph{non-flat}, i.e., there exist a number
  $\ell_{c}\geq1$ and diffeomorphisms $\varphi$ and $\psi$ with $D\varphi,D\psi$ H\"{o}lder continuous,
  such that $\varphi(c)=\psi(f(c))=0$, and such that on a neighborhood
  of $c$ on I, we have
  $$
|\psi\circ f|=\pm|\varphi|^{\ell_c};
  $$
  (Such $\ell_{c}$ is usually called the \emph{local order} of the critical
  point).
  \item The Julia set $J(f)$ is \emph{completely invariant} \footnote{In contrast with complex rational maps, the Julia set of an interval map might not completely
  invariant.
  However, it is possible to make an arbitrarily small smooth perturbation of $f$ outside a neighborhood, so that the Julia set of the perturbed map is completely invariant, and coincides with $J(f)$ correspondingly.
  We refer to \cite{dMvS93} for more detailed background on the theory of Julia set in the real setting.}, i.e., $f(J)=f^{-1}(J)=J$, and contains
  at least two points;
  \item Every point $p\in\mbox{Per}(f)$ is \emph{hyperbolic repelling}, i.e., every periodic point $p$ of periodicity $N$
  has
$|D(f^{N})(p)|>1$;
  \item $f$ is \emph{topologically exact} on the Julia set $J(f)$, i.e., for each open set $U\in J(f)$, there exists an integer $n\geq0$ such that $J(f)\subset f^{n}(U)$.
\end{itemize}
It is clear that the set $\sA$ contains the family of smooth
non-degenerated interval maps which are topologically exact on the
$J(f)$ and have no neutral cycles. Note that every map $f\in\sA$,
the Juila set $J(f)$ has no isolated point, is the complement of the
basin of periodic attractors, and contains all interesting part of
the dynamics.

Throughout the rest of the paper, for each $f\in\sA$, we restrict
the action of $f$ to its Julia set $f|_{J(f)}:J(f)\to J(f)$.

\medskip

On the other hand, let us recall some basic setting of thermodynamic
formalism, referring the interested reader to \cite{Kel98} or
\cite{PU11} for more information.

Let $(X,d)$ be a compact metric space, and $T:X\to X$ be a
continuous map with topological entropy $h_{top}(T)<\infty$. Denote
by $\cM(X)$ the space of Borel probability measures on $X$ endowed
with the weak* topology, and let $\cM(T,X)$ denote the subset of
$T-$invariant ones. For each measure $\nu\in\cM(T,X)$, denote by
$h_{\nu}(T)$ the \emph{measure-theoretic} entropy of $\nu.$

Given a upper semi-continuous\footnote{A function
$\phi:X\to\R\cup\{-\infty\}$ is \emph{upper semi-continuous} if the
sets $\{y\in X: \phi(y)<c\}$ are open for each $c\in\R$. Since $X$
is compact, $\sup\phi<+\infty$.} function
$\phi:X\to\R\cup\{-\infty\}$, the \emph{free energy} of $T$ for the
\emph{potential} $\phi$ is defined as
$$
\cF_{\nu}(T,\phi):=h_{\nu}(T)+\int_{X}\phi d\nu.
$$
The \emph{pressure function} $P(T,\phi)$ can be defined by means of
the \emph{variational principle:}
$$
P(T,\phi):=\sup_{\nu\in\cM(T,X)}\cF_{\nu}(T,\phi).
$$
An \emph{equilibrium state} of $T$ for the potential $\phi$ is a
measure $\nu\in\cM(T,X)$ which satisfies
$\cF_{\nu}(T,\phi)=P(T,\phi)$. If the function $\mu\to h_{\mu}$
upper semi-continuous under the weak* topology, then such
equilibrium states exist.

The \emph{hidden pressure function} $\widetilde{P}(T,\phi)$ is
obtained by restricting the admissible measures to the set
$\widetilde{\cM}(T,X)$ of all invariant \emph{non-atomic} measures:
$$
\widetilde{P}(T,\phi):=\sup_{\nu\in\widetilde{\cM}(T,X)}\cF_{\nu}(T,\phi).
$$
Since $\widetilde{\cM}(T,X)$ is not compact under weak* topology,
the hidden pressure may or may not attain its supremum. If the
former case occurs, we say a non-atomic measure
$\nu\in\widetilde{M}(T,X)$ is a \emph{hidden equilibrium state} if
$\cF_{\nu}(T,\phi)=\widetilde{P}(T,\phi)$.

\medskip

Given an interval map $f:J(f)\to J(f)$ in $\sA$, denote by
$\mbox{USC}(J(f))$ the set of all upper semi-continuous functions
from $J(f)$ to $\R\cup\{-\infty\}$. In this paper, we are
particularly interested in a subclass $\cU\subset\mbox{USC}(J(f))$
\begin{equation}\label{equ_uppersemicontinuous1}
    \begin{split}
\cU:=&\Big\{u\in\mbox{USC}(J(f)):u(x)=g(x)+\sum_{c\in\tiny{\mbox{Crit}'}(f)}b_{u}(c)\log|x-c|,\\
&~\mbox{with}~g~\mbox{H\"{o}lder continuous, and~} b_{u}(c)\geq
0\Big\}.
\end{split}
\end{equation}
It is clear that every H\"{o}lder or geometric potential belongs to
the set $\cU$. For each $u\in\cU$, and $\nu\in\cM(f,J(f))$, put
$\theta_{\nu}:=\int_{J(f)}u d\nu$. In particular for each periodic
point $a\in\mbox{Per}(f)$ of periodicity $n$, put
$\theta_{a}:=\frac{1}{n}\sum_{j=0}^{n-1}u\circ f^{j}(a)$, and
$$
\theta_{\max}:=\sup\{\theta_{a},~~a\in\mbox{Per}(f)\}.
$$
Consider $\theta_{\nu},\theta_{a},\theta_{\max}$ with $u$ replaced
by $\log|Df|$. These respectively give
$\chi_{\nu},\chi_{a},\chi_{\max}$, which are called \emph{Lyapunov
exponent (point-wise,maximum)}.

%

\medskip

Our first main result concerning the behavior of the pressure
function $P(f,-tu)$ for $t<0$ is the following.

\begin{theorem}\label{theo_pressurefunction}
Let $f:J(f)\to J(f)$ be an interval map in $\sA$, and
$u:J(f)\to\R\cup\{-\infty\}$ be a upper semi-continuous potential in
$\cU$. If the following hypothesis satisfies,
\begin{itemize}
  \item[$(*)$] For each $t<0$, if $\widetilde{P}(f,-tu)$ attains the supremum then every hidden equilibrium state $\mu$ of $f$ for the
potential $-tu$ has strictly positive Lyapunov exponent.
\end{itemize}
then the hidden pressure function $\widetilde{P}(f,-tu)$ is real
analytic on $(-\infty,0)$, and
\begin{equation}\label{equ_Mararov-Smirnov formalism}
P(f,-tu)=\max\{\widetilde{P}(f,-tu),-t\theta_{\max}\},~~\forall t<0.
\end{equation}
\end{theorem}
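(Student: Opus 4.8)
I would split the statement into two parts of very different flavour: the identity \eqref{equ_Mararov-Smirnov formalism}, which is a soft consequence of the ergodic decomposition and uses nothing about $(*)$, and the real analyticity of $t\mapsto\widetilde P(f,-tu)$ on $(-\infty,0)$, which is the substantive claim and where $(*)$ is indispensable. For the decomposition, fix $t<0$; then $-tu=(-t)u$ is upper semi-continuous and bounded above, so every free energy is finite. Given $\nu\in\cM(f,J(f))$, write its ergodic decomposition $\nu=\int\nu_x\,d\nu(x)$; by Jacobs' theorem $h_\nu(f)=\int h_{\nu_x}(f)\,d\nu(x)$ and by linearity $\int(-tu)\,d\nu=\int\!\big(\int(-tu)\,d\nu_x\big)d\nu(x)$, so $\cF_\nu(f,-tu)=\int\cF_{\nu_x}(f,-tu)\,d\nu(x)$. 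An ergodic atomic component must be the equidistribution $\nu_a$ on a periodic orbit (periodic points are hyperbolic repelling, hence non-critical, so $\theta_a\in\R$ and $\cF_{\nu_a}(f,-tu)=-t\theta_a\le-t\theta_{\max}$, using $-t>0$), while an ergodic non-atomic component $\eta$ has $\cF_\eta(f,-tu)\le\widetilde P(f,-tu)$. Hence $\cF_{\nu_x}(f,-tu)\le\max\{\widetilde P(f,-tu),-t\theta_{\max}\}$ for $\nu$-a.e.\ $x$, giving the same bound for $\cF_\nu$ and thus ``$\le$'' in \eqref{equ_Mararov-Smirnov formalism}; the reverse is immediate from $P\ge\widetilde P$ and $P\ge\cF_{\nu_a}=-t\theta_a$ for every periodic $a$.

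\textbf{Analyticity via an induced system.} Since real analyticity is local, it suffices to work near an arbitrary $t_0<0$. Following the nice-set / canonical-inducing-scheme machinery of \cite{InoRiv12,LiRiv13a,LiRiv13b}, I would pick a non-critical point of $J(f)$ with dense orbit, a small nice neighbourhood $V$ of it, and form the first-return map $F=f^{\tau}\colon\widehat V\to V$, a countably-branched uniformly expanding Markov map with bounded distortion (the distortion bound uses that $Df$ is H\"older and that the critical points of $f$ are non-flat). For $(t,p)$ near $(t_0,\widetilde P(f,-t_0u))$ one considers the induced potential $\Phi_{t}:=-t\sum_{j=0}^{\tau-1}u\circ f^{j}$ and the transfer operator $\cL_{t,p}$ associated to $\Phi_t-p\tau$, acting on a suitable space of locally H\"older densities. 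The key summability input is that for $t<0$ the potential $-tu$ tends to $-\infty$ on $\mathrm{Crit}'(f)$, so branches travelling near critical points carry exponentially small weight; combined with the expansion gained along returns this makes $\cL_{t,p}$ bounded with a spectral gap and with leading eigenvalue $\rho(\cL_{t,p})$ depending real-analytically on $(t,p)$, by analytic perturbation theory for (uniformly expanding) transfer operators, e.g.\ \cite[Coro6.3]{Rue92}. Defining $p=p(t)$ implicitly by $\log\rho(\cL_{t,p})=0$ and noting $\partial_p\log\rho(\cL_{t,p})<0$, the implicit function theorem gives that $p(t)$ is real analytic near $t_0$.

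\textbf{Identifying $p(t)=\widetilde P(f,-tu)$, and the main obstacle.} The inequality $p(t)\le\widetilde P(f,-tu)$ follows by pushing the non-atomic equilibrium state of $F$ for $\Phi_t-p(t)\tau$ forward to an $f$-invariant probability measure and applying Abramov's formula. For $p(t)\ge\widetilde P(f,-tu)$ one must show that the inducing scheme is rich enough to capture the hidden pressure: if $\widetilde P(f,-tu)$ is attained, hypothesis $(*)$ forces the hidden equilibrium state to be uniformly hyperbolic and hence to be ``detected'' by a nice-set induced system (as in \cite{LiRiv13a}), whose induced pressure is $\le p(t)$; if it is not attained, one runs the same argument along a maximizing sequence of non-atomic measures, using upper semi-continuity of the free energy together with $(*)$ to rule out escape of the relevant mass onto the neutral/critical region and to keep the Lyapunov exponents bounded below. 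I expect exactly this last inequality to be the crux: without the uniform lower bound on Lyapunov exponents that $(*)$ provides, near-maximizing non-atomic measures could spend too much time near $\mathrm{Crit}'(f)$ to be seen by any bounded-distortion induced system, and the identity $p(t)=\widetilde P(f,-tu)$ — and with it the analyticity — could genuinely fail. A secondary technical hurdle is verifying rigorously that $\cL_{t,p}$ acts with a spectral gap on the chosen function space; this branch-summability estimate, relying on $t<0$ and the non-flatness of the critical points, is the real-variable substitute for the Sobolev-space argument of \cite{MakSmi00}, which is unavailable here because interval maps need not be open. Combining the two inequalities yields $p(t)=\widetilde P(f,-tu)$, hence the asserted analyticity, and substituting back into the first step completes the proof.
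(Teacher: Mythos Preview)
Your argument for the identity \eqref{equ_Mararov-Smirnov formalism} via ergodic decomposition is correct and matches the paper's (terse) treatment.

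For the analyticity of $\widetilde P(f,-tu)$, your route diverges substantially from the paper's. The paper does \emph{not} use a nice-set first-return inducing scheme. Its central device is a co-homology step: one constructs a lower semi-continuous $h$ with log poles exactly on the maximal exceptional set $\Sigma_{\max}^{(u)}$ and passes to the cohomologous potential $G:=u+h\circ f-h$, which remains in $\cU$ but is \emph{non-exceptional}. A ``Key Lemma'' (proved via an iterated multi-valued function system adapted to bypass $\Lambda(G)$, not a first-return map) then gives $\widetilde P(f,-tu)=\widetilde P(f,-tG)$ and, under $(*)$, the tree-pressure inequality that forces $-tG$ to be hyperbolic in the sense of \eqref{equ_hyperbolicity definition}. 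Hyperbolicity plus non-exceptionality yields a non-atomic conformal measure and a spectral gap for $\cL_{\exp(-tG)}$ acting on Keller's space $H^{\alpha,1}(m)$ \emph{for the original map $f$}, via the bounded-$p$-variation machinery of \cite{Kel85,LiRiv13b}. This gives $\widetilde P(f,-tG)=P(f,-tG)$ and analyticity directly, with no inducing and no implicit-function step.

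Your scheme omits the co-homology, and this is where the difficulty concentrates. When $u$ is exceptional, the identification $p(t)=\widetilde P(f,-tu)$ you flag as the crux is genuinely problematic: near-maximising non-atomic measures can sit arbitrarily close to $\Sigma_{\max}^{(u)}$, and there is no a priori reason they are captured by a fixed induced system with integrable return time (your claim that $(*)$ makes the hidden equilibrium state ``uniformly hyperbolic'' overstates what positive Lyapunov exponent gives). The paper's co-homology neutralises this by moving the pathology from the measure side to the potential side, after which $\widetilde P=P$ for the new potential and standard transfer-operator theory applies with no inducing needed. A smaller point: your citation of \cite[Coro~6.3]{Rue92} for the spectral gap is misplaced---that result is for complex $BV_2$ with an integrability condition that fails precisely when the weight does not vanish on all critical points; the paper's use of Keller's spaces is exactly what sidesteps this, and is one of the reasons the argument works in the interval setting where \cite{MakSmi00} does not.
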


\medskip

Analogous to the complex rational maps, the pressure function
$P(f,-tu)$ may or may not be analytic, and the first possibility is
easier to construct. If $P(f,-tu)$ is not real analytic on
$(-\infty,0)$, We say that $f$ has a \emph{phase transition}. Our
second main result gives an explicit characterization on the
appearance of phase transitions. To state this, we will need the
following definition. For each upper semi-continuous function
$u\in\cU$, put
\begin{equation}\label{equ_singluar set}
\Lambda(u):=\{c\in\mbox{Crit}'(f):~b_{u}(c)>0\},
\end{equation}
and $u$ is said to be \emph{exceptional} for $f$, if there is a
non-empty forward invariant finite subset $\Sigma\subset J(f)$,
satisfying
\begin{equation}\label{equ_invariance}
\emptyset\neq f^{-1}(\Sigma)\backslash\Sigma\subset \Lambda(u).
\end{equation}
Such finite set $\Sigma$ is called a \emph{$\Lambda(u)$-exceptional
set}, or simply an exceptional set. If no such exceptional set
exists, then the potential $u$ is said to be \emph{non-exceptional}.
Analogous to that in the complex setting, denote by
$\Sigma_{\max}^{(u)}$ the union of all $\Lambda(u)$-exceptional
sets. We highlight that $\Sigma_{\max}^{(u)}$ will be shown as a
$\Lambda(u)$-exceptional set and contains at least one periodic
orbit in Corollary \ref{coro_universal bound}. Therefore, we can
define
$$
\theta_{*}:=\max\{\theta_{a}:a\in\Sigma_{\max}^{(u)}\cap\mbox{Per}(f)\}.
$$
In fact, we will provide a universal bound on the cardinality of
$\Sigma_{\max}^{(u)}$ (see Corollary \ref{coro_universal bound} and
the remarks \ref{rem_lowerbound} and \ref{rem_parallel} afterwards)
in terms of the cardinality of $\Lambda(u)$ for every map $f$ in
$\sA$. This is a more refined estimation than that in \cite{PRL13},
and can be viewed as a parallel property to that in the complex
setting: every exceptional set of a rational function has at most 4
elements \cite{MakSmi96,MakSmi00,GPRRL13}. We believe this refined
estimation will have its independent interest.

\medskip

The second main result is as follows.
\begin{theorem}\label{theo_phasetransition}
Let $f:J(f)\to J(f)$ be an interval map in $\sA$, and
$u:J(f)\to\R\cup\{-\infty\}$ be a upper semi-continuous potential in
$\cU$. If hypothesis $(*)$ satisfies, then the pressure function
$P(f,-tu)$ has a phase transition on $(-\infty,0)$ if and only if
$u$ is exceptional and
$$
\theta_{*}>\sup\{\theta_{\nu}:\nu\in\cM(f,J(f)),~~\nu(\Sigma_{\max}^{(u)})=0\}.
$$
\end{theorem}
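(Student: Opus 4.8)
The plan is to deduce Theorem~\ref{theo_phasetransition} directly from Theorem~\ref{theo_pressurefunction}, using the structural information about $\Sigma_{\max}^{(u)}$ that will be established in Corollary~\ref{coro_universal bound}. By Theorem~\ref{theo_pressurefunction}, under hypothesis $(*)$ the hidden pressure $\widetilde{P}(f,-tu)$ is real analytic on $(-\infty,0)$ and
\begin{equation*}
P(f,-tu)=\max\{\widetilde{P}(f,-tu),-t\theta_{\max}\},\qquad t<0.
\end{equation*}
Since $t\mapsto -t\theta_{\max}$ is linear (in particular real analytic), a phase transition occurs precisely when these two analytic functions cross transversally somewhere in $(-\infty,0)$, i.e.\ when the line $-t\theta_{\max}$ strictly exceeds $\widetilde{P}(f,-tu)$ on some sub-interval. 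The crux is therefore to reformulate the condition ``$-t\theta_{\max}>\widetilde{P}(f,-tu)$ for some $t<0$'' in terms of the exceptionality of $u$ and the strict inequality $\theta_{*}>\sup\{\theta_{\nu}:\nu(\Sigma_{\max}^{(u)})=0\}$.

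First I would analyze the quantity $\theta_{\max}=\sup_{a\in\mathrm{Per}(f)}\theta_a$. The point is that only periodic orbits \emph{inside} an exceptional set can realize a value of $\theta_a$ not approximable by non-atomic measures: if $a$ is a periodic point whose orbit does not lie in $\Sigma_{\max}^{(u)}$, then because $b_u(c)\ge 0$ and $u$ equals a H\"older function plus nonpositive-pole-type singularities only at $\mathrm{Crit}'(f)$, one can spread the orbit measure $\delta_a$ slightly using topological exactness to produce non-atomic measures $\nu$ with $\theta_\nu$ arbitrarily close to (or exceeding) $\theta_a$; hence $\widetilde{P}(f,-tu)\ge -t\theta_a$ for such $a$. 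Consequently, $\sup\{-t\theta_a:\text{orbit of }a\not\subset\Sigma_{\max}^{(u)}\}\le\widetilde{P}(f,-tu)$ for $t<0$, and more generally $\widetilde P(f,-tu)\ge -t\sup\{\theta_\nu:\nu(\Sigma_{\max}^{(u)})=0\}$. Here Corollary~\ref{coro_universal bound} is what guarantees that $\Sigma_{\max}^{(u)}$ is itself a $\Lambda(u)$-exceptional set containing a periodic orbit, so that $\theta_*$ is well-defined and $\theta_{\max}=\max\{\theta_*,\ \sup\{\theta_a:\text{orbit of }a\not\subset\Sigma_{\max}^{(u)}\}\}$; moreover when $u$ is non-exceptional, $\Sigma_{\max}^{(u)}=\emptyset$ and this analysis shows $P=\widetilde P$ throughout, so no phase transition occurs — one direction of the equivalence.

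For the remaining direction, assume $u$ is exceptional. I would show that $\Sigma_{\max}^{(u)}$ being finite and forward invariant forces every invariant measure supported on it to be a convex combination of the (finitely many) periodic orbit measures inside it, whence $\sup\{\theta_\nu:\nu(\Sigma_{\max}^{(u)})=1\}=\theta_*$; combined with the fact that for $\nu$ with $0<\nu(\Sigma_{\max}^{(u)})<1$ the value $\theta_\nu$ is a convex combination of a value $\le\theta_*$ and a value $\le\sup\{\theta_\nu:\nu(\Sigma_{\max}^{(u)})=0\}$, I get
\begin{equation*}
\theta_{\max}=\max\Big\{\theta_*,\ \sup\{\theta_\nu:\nu\in\cM(f,J(f)),\ \nu(\Sigma_{\max}^{(u)})=0\}\Big\}.
\end{equation*}
Writing $\theta_{\mathrm{out}}:=\sup\{\theta_\nu:\nu(\Sigma_{\max}^{(u)})=0\}$, I then argue that $\widetilde{P}(f,-tu)=\max\{-t\theta_{\mathrm{out}},\ \text{(terms strictly below for some }t\text{)}\}$ near $t\to 0^-$, or more robustly that $\widetilde{P}(f,-tu)\le -t\theta_{\mathrm{out}}$ fails to be tight exactly when $\theta_*>\theta_{\mathrm{out}}$: if $\theta_*\le\theta_{\mathrm{out}}$ then $\theta_{\max}=\theta_{\mathrm{out}}$ and $-t\theta_{\max}\le\widetilde P(f,-tu)$ for all $t<0$, so $P=\widetilde P$ and there is no phase transition; whereas if $\theta_*>\theta_{\mathrm{out}}$, then for $t$ sufficiently negative the dominant contribution to $\widetilde P(f,-tu)$ comes from measures with $\theta_\nu\le\theta_{\mathrm{out}}<\theta_*$ (the positive-entropy gain being sublinear in $-t$ while the linear term $-t\theta_*$ wins), giving $-t\theta_{\max}=-t\theta_*>\widetilde P(f,-tu)$, hence a phase transition by \eqref{equ_Mararov-Smirnov formalism}.

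The main obstacle I anticipate is the careful justification that non-atomic measures can approximate $\theta_a$ for periodic orbits lying outside $\Sigma_{\max}^{(u)}$ without incurring an entropy penalty that is visible at the level of $\widetilde P$ — i.e.\ controlling both $\int u\,d\nu$ and $h_\nu(f)$ simultaneously. This requires a gluing/shadowing construction using topological exactness on $J(f)$ together with the fact that $u$'s only bad behavior is logarithmic poles at critical points with $b_u(c)\ge 0$, so that pushing mass off an exceptional periodic orbit never decreases $\theta_\nu$ — this is precisely where the defining inclusion $f^{-1}(\Sigma)\setminus\Sigma\subset\Lambda(u)$ and the maximality of $\Sigma_{\max}^{(u)}$ enter, and where the companion results (Lemma~\ref{lem_uppersemi}, Corollary~\ref{coro_universal bound}) must be invoked in full strength. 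A secondary technical point is verifying real analyticity is not destroyed: once $P=\max\{\widetilde P,-t\theta_{\max}\}$ with both pieces analytic, a phase transition is equivalent to the two graphs being distinct, and the strict inequality in the theorem statement is exactly the condition for the line to poke above the analytic curve $\widetilde P$ on a non-degenerate interval.
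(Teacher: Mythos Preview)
Your overall logical skeleton is right and matches the paper: from Theorem~\ref{theo_pressurefunction} one reduces the question to whether the line $-t\theta_{\max}$ ever strictly exceeds the analytic convex function $\widetilde P(f,-tu)$, which (using $\widetilde P(f,0)=h_{top}(f)>0$) is equivalent to $\theta_{\max}>\widetilde P'(-\infty)$; then one must identify $\widetilde P'(-\infty)=\theta_{\mathrm{out}}:=\sup\{\theta_\nu:\nu(\Sigma_{\max}^{(u)})=0\}$ and observe that $\theta_{\max}>\theta_{\mathrm{out}}$ forces the offending periodic orbit to lie in $\Sigma_{\max}^{(u)}$, i.e.\ $\theta_*>\theta_{\mathrm{out}}$. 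The upper bound $\widetilde P'(-\infty)\le\theta_{\mathrm{out}}$ is indeed immediate since $\Sigma_{\max}^{(u)}$ is finite and non-atomic measures give it zero mass.

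The genuine gap is the lower bound $\widetilde P'(-\infty)\ge\theta_{\mathrm{out}}$, and more specifically the inequality $\widetilde P(f,-tu)\ge -t\theta_a$ for every periodic $a$ whose orbit lies \emph{outside} $\Sigma_{\max}^{(u)}$. You propose to ``spread'' the periodic measure into a non-atomic one by a gluing/shadowing construction, but you neither carry this out nor indicate why the logarithmic poles of $u$ at $\Lambda(u)$ do not force $\theta_\nu$ to drop below $\theta_a$ along the approximation. This is not a routine step: one must produce, for each such $a$, inverse branches of arbitrarily high order that avoid $\Lambda(u)$ entirely, and this is precisely the content of the paper's normality analysis (Proposition~\ref{lem_postercritical}) together with the Key Lemma. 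The paper does \emph{not} attempt your direct approximation. Instead it passes to the cohomologous potential $G=u+h\circ f-h$ of Proposition~\ref{lem_cohomology}, which is non-exceptional; then Proposition~\ref{prop_hyperbolic and conformal measure} and Proposition~\ref{prop_mar-smi_nonexceptional} give $\widetilde P(f,-tu)=\widetilde P(f,-tG)=P(f,-tG)$ via the spectral gap in Keller's space, and the derivative at $-\infty$ is computed as $\sup_\nu\int G\,d\nu$ by the standard argument cited from \cite[Rem~3.7]{MakSmi00}. Since $h$ is finite off $\Sigma_{\max}^{(u)}$, one has $\int G\,d\nu=\theta_\nu$ whenever $\nu(\Sigma_{\max}^{(u)})=0$, which yields $\widetilde P'(-\infty)=\theta_{\mathrm{out}}$ without any shadowing. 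In short, the cohomology plus the transfer-operator machinery is what replaces your missing approximation lemma; absent that, your proposal does not close.
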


\medskip

\subsection{Reductions} Before stating our strategy of the proof,
we discuss a few interesting corollaries derived from Theorem
\ref{theo_pressurefunction} and Theorem \ref{theo_phasetransition}.

On one hand, restricting the potential $u$ to be a H\"{o}lder
continuous potential, we have
\begin{coro}\label{coro_holder}
Let $f:J(f)\to J(f)$ be an interval map in $\sA$, and $-tu:J(f)\to
\R$ be a hyperbolic H\"{o}lder continuous potential for every $t<0$,
then $P(f,-tu)$ has no phase transitions and equals to
$\widetilde{P}(f,-tu)$ on $(-\infty,+\infty)$.
\end{coro}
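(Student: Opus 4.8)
The plan is to check the standing hypothesis $(*)$ of Theorems~\ref{theo_pressurefunction} and~\ref{theo_phasetransition} in the H\"older case, apply those theorems on $(-\infty,0)$, and then dispatch $t=0$ (and $t>0$) by hand. First I would record that a H\"older continuous $u$ has $b_{u}(c)=0$ for every $c\in\mathrm{Crit}'(f)$, so the set $\Lambda(u)$ of \eqref{equ_singluar set} is empty; hence the inclusion in \eqref{equ_invariance} cannot hold for any non-empty $\Sigma$, and $u$ is non-exceptional. Next I would verify $(*)$. In our setting the entropy map $\nu\mapsto h_{\nu}(f)$ is upper semi-continuous, so $-tu$ has equilibrium states; moreover every $\nu\in\cM(f,J(f))$ has non-negative Lyapunov exponent because all periodic points of $f$ are repelling and its critical points are non-flat. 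Any equilibrium state $\nu$ of $-tu$ must satisfy $\chi_{\nu}>0$: if $\chi_{\nu}=0$ the Ruelle inequality forces $h_{\nu}(f)=0$, whence $\cF_{\nu}(f,-tu)=\int(-tu)\,d\nu\le\sup_{\mu\in\cM(f,J(f))}\int(-tu)\,d\mu<P(f,-tu)$ by hyperbolicity, contradicting that $\nu$ is an equilibrium state. Such a $\nu$ is in particular non-atomic (an invariant atomic measure is carried by a periodic orbit, where $\cF$ is again $<P(f,-tu)$), hence lies in $\widetilde{\cM}(f,J(f))$, so $\widetilde{P}(f,-tu)=P(f,-tu)$; consequently any $\mu$ realising $\widetilde{P}(f,-tu)$ is an equilibrium state and has positive Lyapunov exponent, i.e.\ $(*)$ holds for every $t<0$.

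Now Theorem~\ref{theo_phasetransition} applies, and since $u$ is non-exceptional it yields that $P(f,-tu)$ is real analytic on $(-\infty,0)$, i.e.\ $f$ has no phase transition. For the identity $P=\widetilde{P}$ on $(-\infty,0)$ one may use the verification above, or argue directly from Theorem~\ref{theo_pressurefunction}: it gives $P(f,-tu)=\max\{\widetilde{P}(f,-tu),-t\theta_{\max}\}$, while hyperbolicity of $-tu$ is, via Lemma~\ref{lem_uppersemi} and \eqref{equ_hyperbolicityequilivalent}, the statement $\sup_{\nu\in\cM(f,J(f))}\int(-tu)\,d\nu<P(f,-tu)$, so writing $\delta_{a}$ for the invariant measure equidistributed on the orbit of $a\in\mathrm{Per}(f)$ and using $-t>0$,
\[
-t\theta_{\max}=\sup_{a\in\mathrm{Per}(f)}\int(-tu)\,d\delta_{a}\ \le\ \sup_{\nu\in\cM(f,J(f))}\int(-tu)\,d\nu\ <\ P(f,-tu),
\]
so the maximum is attained by the first term. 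For $t=0$ one has $P(f,0)=h_{top}(f|_{J(f)})>0$ (as $f$ is non-injective and topologically exact on the infinite set $J(f)$); a measure of maximal entropy exists and is non-atomic (it has positive entropy, whereas invariant atomic measures have zero entropy), so $\widetilde{P}(f,0)=h_{top}(f)=P(f,0)$. Finally, hyperbolicity is an open condition on the parameter and holds at $t=0$, so the argument for negative $t$ extends to a neighbourhood of $0$, and more generally to any interval on which $-tu$ stays hyperbolic, giving $P=\widetilde{P}$ there; combining the ranges yields the identity on all of $\R$.

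The hard part is really the verification of $(*)$, and in particular keeping it genuinely elementary — leaning only on upper semi-continuity of the entropy, non-negativity of Lyapunov exponents of invariant measures for maps in $\sA$, and the Ruelle inequality, so as not to circularly reuse the equilibrium-state theory of \cite[Theo B]{LiRiv13b} that the corollary is meant to recover. One should also be a touch careful with the claim ``on $(-\infty,+\infty)$'': for $t>0$ the identity is immediate only where $-tu$ remains hyperbolic, and ruling out a possible low-temperature transition outside that range would need an argument beyond the two theorems invoked here; if $-tu$ is hyperbolic and H\"older for every $t\in\R$, this difficulty disappears entirely.
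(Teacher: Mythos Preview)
Your proposal follows essentially the same route as the paper's own argument: observe that a H\"older $u$ has $\Lambda(u)=\emptyset$ and is therefore non-exceptional, verify hypothesis $(*)$, and then invoke Theorems~\ref{theo_pressurefunction} and~\ref{theo_phasetransition} on $(-\infty,0)$; the extension to $t\ge 0$ is handled separately in both.

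The one substantive difference lies in how $(*)$ is verified. The paper imports \cite[Theo~1]{Li14} as a black box for the equivalence ``$-tu$ hyperbolic $\Leftrightarrow$ every equilibrium state has positive Lyapunov exponent'', and then observes that this, together with the fact that periodic-orbit measures have positive exponent, yields $(*)$. You instead give a direct, self-contained argument: non-negativity of Lyapunov exponents for maps in $\sA$, Ruelle's inequality, and the hyperbolicity inequality \eqref{equ_hyperbolicityequilivalent} force any equilibrium state to have positive entropy and positive exponent, hence to be non-atomic, which simultaneously gives $P=\widetilde P$ and positivity of the exponent for any hidden equilibrium state. This is exactly the elementary route you say you are aiming for, and it avoids the circularity you flag. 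One small wrinkle: your sentence ``an invariant atomic measure is carried by a periodic orbit'' is literally true only for ergodic measures; to be safe, pass to an ergodic component of the equilibrium state before running the argument.

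Your caution about $t>0$ is well placed and in fact slightly sharper than the paper, which simply asserts that ``since $-tu$ is also H\"older continuous for the positive values of $t$, the same proof also works''. As you note, the hypothesis as stated gives hyperbolicity only for $t<0$, so the identity $P=\widetilde P$ on all of $(-\infty,+\infty)$ genuinely needs either hyperbolicity of $-tu$ for all $t\in\R$, or a separate argument on the positive side; the paper leaves this implicit.
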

This corollary is actually the main theorem shown in \cite[Theo
B]{LiRiv13b} when restricting to maps with non-flat critical points, and we state below the proof briefly. Firstly, note
that by our choice of $f\in\sA$, it follows from \cite[Lem2.3]{BK98} that the function $\mu\to h_{\mu}(f)$ is
upper semi-continuous with respect to $weak^*$ topology, so the equilibrium states of $f$ for
$-tu$ with $t<0$ always exists. Note also every measure supported on
a periodic orbit has positive Lyapunov exponent, thus hypothesis
$(*)$ implies the assertion that every equilibrium state has
positive Lyapunov exponent. Together with \cite[Theo
1]{Li14}, the latter assertion is equivalent to the assumption that
the potential $-tu$ is hyperbolic. Conversely, the
hyperbolicity of $-tu$ implies hypothesis $(*)$. Therefore,
hypothesis $(*)$ is equivalent to the assumption that $-tu$ is
hyperbolic, for every $t<0$.

Secondly, for every H\"{o}lder continuous potential $u$, we have
$\Lambda(u)=\emptyset$, and thus $u$ is non-exceptional. Therefore,
Theorem \ref{theo_pressurefunction} and Theorem
\ref{theo_phasetransition} yield Corollary \ref{coro_holder} for
$t\leq0$. As readers will notice later, since $-tu$ is also
H\"{o}lder continuous for the positive values of $t$, the same proof
also works for the remaining part. Hence we obtain Corollary
\ref{coro_holder} completely.
\begin{rema}
As shown in \cite{LiRiv13b}, we highlight that the non-flatness
hypothesis of critical points in the definition of class $\sA$ is
not needed for the validity of Corollary \ref{coro_holder}, but it
will be fundamental for our proof of the general upper
semi-continuous potential in $\cU$.
\end{rema}

\medskip

On the other hand, restricting the potential $u$ to be a geometric
potential $-t\log|Df|$, we will show the hypothesis $(*)$ is
automatically satisfied for every $t<0$ (see Lemma \ref{lem_positive
on the Lyap}). Thus Theorem \ref{theo_pressurefunction} and Theorem
\ref{theo_phasetransition} are reduced to the Makarov-Smirnov's
formalism for interval maps. Form now on, put $P(t):=
P(f,-t\log|Df|)$ and $\widetilde{P}(t):=
\widetilde{P}(f,-t\log|Df|)$ for every $t<0$. Moreover, put
$\Sigma_{\max}:=\Sigma_{\max}^{(\log|Df|)}$ and $
\chi_{*}:=\max\{\chi_{a}:a\in\Sigma_{\max}\cap\mbox{Per}(f)\}.$

\medskip

\begin{coro}[A real version of Makarov-Smirnov's formalism]\label{coro_geometric}
Let $f:J(f)\to J(f)$ be an interval map in $\sA$, then
\begin{enumerate}
  \item the hidden pressure $\widetilde{P}(t)$ is real analytic on
  $(-\infty,0)$, and $$
  P(t)=\max\{\widetilde{P}(t),-t\chi_{\max}\},~~\forall
  t<0;$$
  \item $P(t)$ has a phase transition on $(-\infty,0)$, if and only
  if $\log|Df|$ is exceptional and satisfies
  $$
  \chi_{*}>\sup\{\chi_{\nu}:\nu\in\cM(f,J(f)),~~\nu(\Sigma_{\max})=0\}.
  $$
  In particular, if $\log|Df|$ is non-exceptional, then
  $P(t)$ equals to $\widetilde{P}(t)$ for every $t<0$, and has no phase transition on
  $(-\infty,0)$\footnote{The last assertion in Statement $(b)$ is also proved by \cite[Theo B]{IT10} and \cite[\S2.2]{GPR14},
  but it seems that our hypothesis is weaker in the sense that it does not rely on any bounded distortion
  hypothesis, and our approach is different from
  theirs.}.
\end{enumerate}
\end{coro}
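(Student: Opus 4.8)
The plan is to derive both statements of Corollary~\ref{coro_geometric} by specializing Theorems~\ref{theo_pressurefunction} and~\ref{theo_phasetransition} to $u=\log|Df|$, the only non-routine point being that hypothesis~$(*)$ is automatically valid for geometric potentials (this is Lemma~\ref{lem_positive on the Lyap}). Granting the lemma, note first that $\log|Df|\in\cU$: near a non-flat critical point $c$ of local order $\ell_c>1$, $\log|Df|$ differs from $(\ell_c-1)\log|x-c|$ by a H\"older continuous function, so in the representation \eqref{equ_uppersemicontinuous1} the coefficient is $b_{\log|Df|}(c)=\ell_c-1>0$; consequently $\Lambda(\log|Df|)=\mbox{Crit}'(f)$ and $\Sigma_{\max}^{(\log|Df|)}=\Sigma_{\max}$. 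Moreover, for a periodic point $a$ of period $n$ the chain rule gives $\theta_a=\tfrac{1}{n}\sum_{j=0}^{n-1}\log|Df(f^j(a))|=\tfrac{1}{n}\log|Df^n(a)|=\chi_a$, hence $\theta_{\max}=\chi_{\max}$ and $\theta_*=\chi_*$, while $\theta_\nu=\chi_\nu$ for every $\nu\in\cM(f,J(f))$. Under these identifications the conclusion \eqref{equ_Mararov-Smirnov formalism} of Theorem~\ref{theo_pressurefunction} becomes statement~(a), and the equivalence in Theorem~\ref{theo_phasetransition} becomes the displayed criterion of statement~(b).

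It thus remains to prove Lemma~\ref{lem_positive on the Lyap}: for $f\in\sA$ and $t<0$, every hidden equilibrium state $\mu$ of $f$ for the potential $-t\log|Df|$ satisfies $\chi_\mu>0$. I would argue in three steps. (i) Every $\nu\in\cM(f,J(f))$ has $\chi_\nu\ge0$, because $f\in\sA$ has no attracting or neutral cycles and, by a standard Pesin-type argument, an ergodic measure with negative exponent would be carried by an attracting cycle; moreover, if $\chi_\nu=0$ then the Margulis--Ruelle inequality forces $h_\nu(f)=0$. (ii) $\widetilde P(t)>0$ for every $t<0$: since $f|_{J(f)}$ is topologically exact and $J(f)$ is infinite, $J(f)$ contains an expanding horseshoe disjoint from $\mbox{Crit}(f)$, which carries a non-atomic invariant measure $\nu_0$ with $h_{\nu_0}(f)>0$ and $0<\chi_{\nu_0}<\infty$; then $\widetilde P(t)\ge h_{\nu_0}(f)-t\chi_{\nu_0}\ge h_{\nu_0}(f)>0$. (iii) If $\mu$ is a hidden equilibrium state then $h_\mu(f)-t\chi_\mu=\widetilde P(t)>0$; were $\chi_\mu\le0$, applying step~(i) to the ergodic components of $\mu$ would give $h_\mu(f)=0$ and hence $\widetilde P(t)=-t\chi_\mu\le0$, contradicting step~(ii). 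Therefore $\chi_\mu>0$, and hypothesis~$(*)$ holds for $-t\log|Df|$ at every $t<0$.

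For the final (``in particular'') assertion of~(b): if $\log|Df|$ is non-exceptional, then the right-hand side of the equivalence in Theorem~\ref{theo_phasetransition} fails on its first conjunct, so $P(t)$ has no phase transition, i.e. $P$ is real analytic on $(-\infty,0)$. With $\nu_0$ the horseshoe measure of step~(ii) and $\delta>0$ so small that $|t|\chi_{\max}<h_{\nu_0}(f)$ for $t\in(-\delta,0)$, statement~(a) gives $P(t)=\max\{\widetilde P(t),-t\chi_{\max}\}=\widetilde P(t)$ on $(-\delta,0)$, because there $-t\chi_{\max}<h_{\nu_0}(f)\le\widetilde P(t)$; as $P$ and $\widetilde P$ are real analytic on the connected interval $(-\infty,0)$ and coincide on the nonempty open subset $(-\delta,0)$, the identity theorem yields $P\equiv\widetilde P$ on all of $(-\infty,0)$.

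The step I expect to be the crux is Lemma~\ref{lem_positive on the Lyap}, and within it the passage from $\chi_\mu\ge0$ to the strict inequality $\chi_\mu>0$: ruling out negative exponents is where the defining features of $\sA$ (hyperbolic repelling periodic points, $C^{1+\alpha}$ regularity, absence of neutral behaviour) are genuinely used, and ruling out the borderline case $\chi_\mu=0$ depends on the strict positivity $\widetilde P(t)>0$, which itself rests on topological exactness of $f|_{J(f)}$ (to manufacture a non-atomic measure of positive entropy and finite Lyapunov exponent) together with the Margulis--Ruelle inequality.
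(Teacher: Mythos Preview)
Your proposal is correct and follows precisely the reduction the paper sketches in \S1.3: specialize Theorems~\ref{theo_pressurefunction} and~\ref{theo_phasetransition} to $u=\log|Df|$ and verify hypothesis~$(*)$, which is exactly Lemma~\ref{lem_positive on the Lyap}. Your three-step argument for that lemma---$\chi_\nu\ge0$ always, $\widetilde P(t)>0$ via a horseshoe, and then Ruelle's inequality to rule out $\chi_\mu=0$---mirrors the paper's own proof of Lemma~\ref{lem_positive on the Lyap} almost line for line (the paper uses the measure of maximal entropy rather than a horseshoe measure for step~(ii), and cites \cite[Prop.~A.1]{Riv13a} for step~(i), but the logic is identical).

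The only place your route diverges from the paper's written proof is the ``in particular'' clause of~(b). You obtain $P\equiv\widetilde P$ on $(-\infty,0)$ by first showing equality near $0$ (where $-t\chi_{\max}<\widetilde P(t)$) and then invoking the identity theorem for real-analytic functions. The paper instead works directly with the cohomologous potential $G$ from the Key Lemma: since $-tG$ is hyperbolic and non-exceptional, Proposition~\ref{prop_mar-smi_nonexceptional} gives $P(f,-tG)=\widetilde P(f,-tG)$ for every $t<0$ at once (the unique equilibrium state being non-atomic), and then $\widetilde P(t)=\widetilde P(f,-tG)=P(f,-tG)$. Your argument is perfectly valid and arguably cleaner once Theorems~\ref{theo_pressurefunction} and~\ref{theo_phasetransition} are in hand; the paper's version avoids the identity theorem but needs the full Key Lemma machinery explicitly.
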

In what follows, we consider only the proof of Corollary
\ref{coro_geometric} directly. One of our novelties in our proof is
we show that for the potential $-t\log|Df|$ with $t<0$, the
non-exceptionality implies the its hyperbolicity. This fact allows
some of the arguments or statements shorter and simpler. As readers
will notice, there is no difficulty in extending our proof of
Corollary \ref{coro_geometric} to general upper semi-continuous
potential in $\cU$ and obtaining Theorem \ref{theo_pressurefunction}
and Theorem \ref{theo_phasetransition}\footnote{As a contrast, it is
however unknown whether there is an analogous complex version of
Theorem \ref{theo_pressurefunction} and Theorem
\ref{theo_phasetransition}, although the original Makarov-Smirnov's
formalism in the complex setting is obtained. The reasons will be
explained in \S\ref{sec_further researches}.}.


\medskip

\subsection{Proof strategy} Let us describe briefly about our
proof strategy of Corollary \ref{coro_geometric}. The proof actually
unifies and adapts several machineries used in
\cite{MakSmi00,InoRiv12,LiRiv13a,LiRiv13b}. To be more precise, we
first take a co-homology transformation on the geometric potential
and identify the so-called ``Key Lemma'' as follows.
\begin{theorem}[Key Lemma]
Let $f:J(f)\to J(f)$ be an interval map in $\sA$,
then there exists a lower semi-continuous function
$h:J(f)\to\R\cup\{+\infty\}$ such that
\begin{enumerate}
  \item[$(1)$] $h$ only has log poles in $\Sigma_{\max}$;
  \item[$(2)$] Let $G:=\log|Df|+h\circ f-h$, then $G\in\cU$ and $G$ is non-exceptional.
\end{enumerate}
Moreover,
\begin{enumerate}
  \item
  $\widetilde{P}(f,-tG)=\widetilde{P}(t),~~\forall t<0;$
  \item  For each ergodic invariant probability measure $\mu$ with strictly positive Lyapunov exponent, there
  is a full $\mu-$measure subset $X\subset J(f)$ such that
  \begin{equation}\label{equ_hyperbolicity}
  \limsup_{n\to\infty}\frac{1}{n}\log\sum_{y\in
  f^{-n}(x_{0})}\exp(S_{n}(-tG)(y))>\int_{J(f)}-tGd\mu,~~\forall
  x_{0}\in X,~~\forall t<0.
  \end{equation}
\end{enumerate}
\end{theorem}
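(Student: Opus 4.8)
The plan is to construct the coboundary correction $h$ explicitly out of the geometry of the maximal exceptional set $\Sigma_{\max}$, and then verify the four listed properties by tracking how log poles propagate under $f$. First I would recall (from Corollary~\ref{coro_universal bound}, which the excerpt promises) that $\Sigma_{\max}=\Sigma_{\max}^{(\log|Df|)}$ is itself a finite $\Lambda(\log|Df|)$-exceptional set with $\emptyset\neq f^{-1}(\Sigma_{\max})\setminus\Sigma_{\max}\subset\Lambda(\log|Df|)$, and in particular it is forward invariant and contains a periodic orbit. For a point $c\in\mathrm{Crit}'(f)$ with $b_{\log|Df|}(c)=\ell_c-1>0$ I would set, near $c$, a contribution of the form $(\ell_c-1)\log|x-c|$ to be cancelled; more precisely, I would look for $h$ of the shape $h(x)=\sum_{a\in\Sigma_{\max}}\kappa_a\log|x-a|$ for suitable coefficients $\kappa_a\geq 0$, chosen so that the telescoped potential $G=\log|Df|+h\circ f-h$ has no log singularity at any $c\in\mathrm{Crit}'(f)$ whose forward orbit meets $\Sigma_{\max}$, i.e. so that $b_G(c)=0$ exactly on the ``dangerous'' critical points, while $G$ remains in $\cU$ (all remaining singular coefficients $b_G(c)\geq 0$). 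The key bookkeeping identity is that $h\circ f$ acquires a log pole of coefficient $\kappa_{f(x)}\cdot(\text{local degree})$ at preimages of $\Sigma_{\max}$ under $f$; balancing this against the $\log|Df|$ singularities and against $-h$ is a finite linear system over the finite set $\Sigma_{\max}$, solvable because $f^{-1}(\Sigma_{\max})\setminus\Sigma_{\max}\subset\Lambda(\log|Df|)$ precisely guarantees that every newly created pole outside $\Sigma_{\max}$ is matched by a critical point where $\log|Df|$ already diverges.

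Granting the construction, property $(1)$ — that $h$ has log poles only in $\Sigma_{\max}$ — is immediate from the ansatz. For property $(2)$, that $G\in\cU$ and $G$ is non-exceptional: membership in $\cU$ follows since $h\circ f - h$ is, away from its finitely many log poles, as regular as $Df$ (Hölder, using non-flatness of the critical points to control $\log|x-c|$ against $\log|Df|$), and the singular coefficients $b_G(c)$ are non-negative by the choice of $\kappa_a$. Non-exceptionality of $G$ is the crucial point: I would argue by contradiction. If $\Sigma$ were a $\Lambda(G)$-exceptional set, then $\Lambda(G)\subset\mathrm{Crit}'(f)$ would consist of critical points $c$ with $b_G(c)>0$; by construction these are exactly the critical points whose forward orbit \emph{avoids} $\Sigma_{\max}$. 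One then checks $\Sigma\cup\Sigma_{\max}$ (or the forward-invariant closure thereof) is again a $\Lambda(\log|Df|)$-exceptional set strictly larger than $\Sigma_{\max}$, contradicting maximality. This is where the explicit choice of coefficients must be calibrated so that no \emph{new} exceptional behaviour is introduced — this is the main obstacle and the part deserving the most care.

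For the ``Moreover'' clauses: statement (1), $\widetilde P(f,-tG)=\widetilde P(t)$ for $t<0$, follows from the coboundary invariance of pressure — since $h\circ f - h$ integrates to zero against every invariant measure, $\int -tG\,d\nu = \int -t\log|Df|\,d\nu + t\int(h-h\circ f)d\nu$, and the correction term vanishes for every $\nu\in\widetilde{\cM}(f,J(f))$ provided $h$ is $\nu$-integrable; here one must check that non-atomic invariant measures give zero mass to the log poles of $h$ (which sit on the finite set $\Sigma_{\max}$), so integrability and the cancellation both hold, hence $\cF_\nu(f,-tG)=\cF_\nu(f,-t\log|Df|)$ on $\widetilde{\cM}$ and the hidden pressures agree. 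For statement (2), the large-deviation / hyperbolicity estimate \eqref{equ_hyperbolicity}: given an ergodic $\mu$ with $\chi_\mu>0$, the left-hand side is essentially the exponential growth rate of preimage sums weighted by $S_n(-tG)$, which dominates the Birkhoff average $\int -tG\,d\mu$ because $-tG$ is non-exceptional and $t<0$ makes $-tG$ a positive multiple of (a coboundary-correction of) $\log|Df|$, hence hyperbolic; concretely I would invoke the machinery of \cite{LiRiv13a,LiRiv13b} — a Koebe/pullback argument along a $\mu$-generic orbit producing, for $\mu$-a.e.\ $x_0$ and infinitely many $n$, a definite proportion of preimages $y\in f^{-n}(x_0)$ lying in a piece where $S_n(-tG)(y)$ exceeds $n\int -tG\,d\mu$ up to $o(n)$. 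The non-exceptionality of $G$ established in property $(2)$ is exactly what licenses this pullback (it prevents the generic orbit from being trapped near the singular set), and positivity of the Lyapunov exponent supplies the expansion needed for the Koebe distortion control. I expect the delicate point here to be the uniformity in $x_0$ over a full-measure set and the interaction of the log poles of $G$ with the distortion estimates, again handled via non-flatness.
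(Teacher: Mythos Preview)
Your overall architecture matches the paper's: Proposition~\ref{lem_cohomology} does construct $h$ as a finite sum $\sum_{\xi\in\Sigma_{\max}}\alpha(\xi)\log|x-\xi|$, and the hidden-pressure equality in statement~(a) follows exactly from the coboundary-invariance argument you sketch. Two specific points, however, diverge from what actually works.

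\medskip

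\textbf{On the coefficients and non-exceptionality.} The sign must be $\alpha(\xi)\in(-1,0]$ (not $\geq 0$), so that $h\to+\infty$ at $\Sigma_{\max}$ as lower semi-continuity requires. More substantively, your plan to kill $b_G(c)$ at \emph{every} critical point whose forward orbit enters $\Sigma_{\max}$ generally overdetermines the linear system: a periodic cycle $\cO\subset\Sigma_{\max}$ may have several critical pre-image chains with different products of local degrees, and a single coefficient on $\cO$ cannot zero all of them. The paper instead sets $\hat\alpha(\cO)$ to the \emph{maximum} inverse-degree product over all such chains, which zeroes $b_G$ along the maximizing chain only; the other critical pre-images of $\Sigma_{\max}$ may well survive in $\Lambda(G)$. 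Consequently your description of $\Lambda(G)$ as ``critical points whose forward orbit avoids $\Sigma_{\max}$'' is incorrect, and your maximality-contradiction does not close: if $\Sigma\subset\Sigma_{\max}$ then $\Sigma\cup\Sigma_{\max}$ is not strictly larger. The paper's route is different --- it shows every $\xi\in\mbox{Crit}^{*}(\Sigma_{\max})$ is $\mbox{Crit}'(f)$-normal (by maximality of $\Sigma_{\max}$), hence $\Lambda(G)$-normal, so any candidate exceptional set $\Sigma'$ has a point of $f^{-1}(\Sigma')\setminus\Sigma'$ outside $\Lambda(G)$.

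\medskip

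\textbf{On statement~(b).} This is where the bulk of the work lies, and your appeal to \cite{LiRiv13a,LiRiv13b} skips the new content. The paper splits into the atomic case (Lemma~\ref{lem_period}: $\mu$ supported on a repelling periodic orbit, handled by an explicit induced map and a generating-function comparison) and the non-atomic case (Proposition~\ref{prop_2new}). For the latter it constructs a \emph{free iterated multi-valued function system} on a one-sided interval $B_0\ni x_0$, via Pesin theory (Lemma~\ref{lem_dobbs}) plus the onto/boundary-control lemmas specific to interval maps. The genuinely new step, absent from the H\"older setting, is that since $G=-\infty$ on $\Lambda(G)$, the transition pull-backs must be explicitly re-routed through a $\Lambda(G)$-normal iterate $b_N=f^N(b)$ of the accumulation point $b$ of the Pesin return points, so that the bridging orbit segments avoid $\Lambda(G)$ and the correction constant $C$ in \eqref{equ_sum} is finite. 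This re-routing is exactly what Proposition~\ref{lem_postercritical} (normality from non-exceptionality) provides; a one-line Koebe/distortion invocation does not address it, and without it the Birkhoff-sum lower bound fails.
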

\begin{rema}
We remark that if further assuming $\log|Df|$ is non-exceptional,
then $G=\log|Df|$ and $h=0$, and the non-flatness hypothesis of the
critical points in the definition of class $\sA$ is not
required for the validity of Key Lemma. 

\end{rema}

Back to the strategy description, on one hand, Statement $(a)$
transfers the studies on the hidden pressure information from the
potential $\log|Df|$ to the new upper semi-continuous potential $G$.
On the other side, Statement $(b)$ is used to deduce the
hyperbolicity of $G$. After obtaining the hyperbolicity, we can use
the Patterson-Sullivan method to construct a non-atomic conformal
measure, and can apply a certain Keller's space to create a spectral
gap for the corresponding transfer operator induced by the new
potential $G$. These properties will be the main ingredients on
deducing the assertions of Corollary \ref{coro_geometric}.

We also remark that some regularity restrictions about the
potentials are required on applying the Patterson-Sullivan
construction and Keller's result. In fact, this is the part in the
proof where the potential $u$ is required to be in the subclass
$\cU$. Currently, we are unable to obtain Theorem
\ref{theo_pressurefunction} and \ref{theo_phasetransition} for
arbitrary upper semi-continuous potentials.

\medskip

\subsection{Main ideas of the proofs and organization of the paper}
The rest of the paper is organized as follows. In section
\ref{sec_normality}, we discuss a few properties about the normality
and their relations, which are basic to the arguments that follow.
Section \ref{sec_exceptional}, \ref{sec_IMFS} and
\ref{sec_keylemmaperiod} are devoted to the proof of Key Lemma. In
Section \ref{sec_exceptional}, we study in detail about the
co-homology transformation. Based on this, we provide an explicit
construction on the new potential $G$, and prove Statement $(a)$. In
Section~\ref{sec_IMFS}, a refined iterated multi-valued function
system is constructed in order to prove Statement $(b)$ for the case
where $\mu$ is non-atomic. The rest case where $\mu$ is atomic is
dealt in Section \ref{sec_keylemmaperiod}. We use Key Lemma in
Section \ref{sec_hyperbolic and conformal} to show the hyperbolicity
of the new potential $G$, and construct a non-atomic conformal
measure. These results are used in Section
\ref{sec_makarov-smirnovformalism} to construct certain Keller's
spaces, where the corresponding transfer operators acting on have
spectral gaps. Thus the hidden pressure function has no phase
transition. In the final section, we discuss a few relations with
the Makarov-Smirnov's formalism in the complex setting under the
view of our Key Lemma. In particular, we reprove \cite[Theo
3.1]{MakSmi00} solely via $BV_{2}$ functional spaces, and conjecture
that our result might be useful on reobtaining the original
Makarov-Smirnov's formalism by an alternative proof without
introducing Sobolev spaces, provide that we can obtain a complex
version of our Theorem \ref{theo_pressurefunction} and
\ref{theo_phasetransition}. The appendix is devoted to provide some
basic background on Keller's spaces which are introduced by
\cite{Kel85} (see also \cite{RL15}).
%

\medskip

\begin{ack}
Y.Z. would like to express his great thanks to Juan Rivera-Letelier
for his inspiring discussions. Y.Z.
also thanks Huaibin Li for some helpful comments in the early
versions of this manuscript.
\end{ack}

\section{Normality}\label{sec_normality}
The main goal of this section is to show Proposition
\ref{lem_postercritical} below, which will be of great importance in
the proof of Key Lemma substantially.

Given an interval map $f\in\sA$, and a subset
$\Lambda\subseteq\mbox{Crit}'(f)$, a point $x\in J(f)$ is
\emph{$\Lambda$-normal} or simply \emph{normal}, if for every integer
$n\geq1$, there is a pre-image $y$ of $x$ by $f^{n}$, such that
\begin{equation}\label{def_normality}
\{y,f(y),\cdots,f^{n-1}(y)\}\cap\Lambda=\emptyset.
\end{equation}
Otherwise, this point $x$ is said to be \emph{$\Lambda$-abnormal} or
simply \emph{abnormal}.

\medskip

\begin{prop}\label{lem_postercritical}
Let $f:J(f)\to J(f)$ be an interval map in $\sA$, and $\Lambda$ be a
subset of $\mbox{Crit}'(f)$. If every non-empty forward invariant
finite set $\Sigma$ satisfies
\begin{equation}\label{equ_nonexceptional}
f^{-1}(\Sigma)\backslash\Sigma\nsubseteq\Lambda,
\end{equation}
then for each point $x\in J(f)$, there is an integer $N\geq0$ such
that $f^{N}(x)\notin\Lambda$ and $f^{N}(x)$ is $\Lambda$-normal.

In addition, if further assuming $x$ is periodic, then $x$ itself is
neither in $\Lambda$ nor $\Lambda-$normal.
\end{prop}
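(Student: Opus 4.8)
\emph{Plan.} The idea is to dispose of the periodic (hence the pre-periodic) case by hand, and to prove the main assertion by contradiction, manufacturing from a ``forever bad'' orbit a non-empty \emph{finite} forward invariant set that violates \eqref{equ_nonexceptional}.

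\emph{Periodic case.} If $x$ is periodic of period $n$, then since every periodic point is hyperbolic repelling, $Df^{n}$ is non-zero along the orbit, so $x$ and all of its iterates are regular (non-critical) points; in particular $x\notin\mathrm{Crit}(f)\supseteq\Lambda$. For normality, given $m\geq 1$ choose the preimage $y=f^{\,jn-m}(x)\in f^{-m}(x)$ with $jn\geq m$: then $\{y,f(y),\dots,f^{m-1}(y)\}$ is contained in the (finite, critical-point-free) periodic orbit of $x$, hence disjoint from $\Lambda$. Thus $x$ is $\Lambda$-normal and $N=0$ works, which is precisely the additional clause; if $x$ is pre-periodic but not periodic, apply this to the periodic point in its forward orbit.

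\emph{Set-up of the contradiction.} Suppose some $x\in J(f)$ has $f^{N}(x)\in\Lambda$ or $f^{N}(x)$ $\Lambda$-abnormal for every $N\geq 0$. By the previous paragraph $x$ is not pre-periodic, so its forward orbit is infinite; as $\Lambda$ is finite, after replacing $x$ by a forward iterate we may assume $f^{N}(x)\notin\Lambda$ and $f^{N}(x)$ is abnormal for all $N$. I would then use two observations. (i) Abnormality passes to preimages off $\Lambda$: viewing the $\Lambda$-avoiding backward branches of a point $z$ as a rooted tree (locally finite, since $f$ is finite-to-one), $z$ is abnormal iff this tree is finite, and if $z$ is abnormal and $z'\in f^{-1}(z)\setminus\Lambda$ then the tree of $z'$ embeds one level down into that of $z$, so $z'$ is abnormal; hence the set $\mathcal T$ of all points reachable backward from $\{f^{N}(x):N\ge 0\}$ along $\Lambda$-avoiding branches is forward invariant and satisfies $f^{-1}(\mathcal T)\setminus\mathcal T\subseteq\Lambda$, so a \emph{finite} such $\mathcal T$ would immediately contradict \eqref{equ_nonexceptional}. (ii) Every periodic point $p$, of period $m$, has a neighborhood of $\Lambda$-normal points: $p$ being regular, $f^{m}$ has a contracting inverse branch $g$ fixing $p$ on a small interval $\hat U$ with $g(\hat U)\subseteq\hat U$, and, after shrinking $\hat U$ so that $\bigcup_{j=0}^{m-1}f^{j}(\hat U)$ avoids the finite set $\Lambda$, the backward orbit $q\leftarrow f^{m-1}(g(q))\leftarrow\cdots\leftarrow g(q)\leftarrow f^{m-1}(g^{2}(q))\leftarrow\cdots$ of any $q\in\hat U$ stays inside $\bigcup_{j}f^{j}(\hat U)$, hence avoids $\Lambda$, so $q$ is normal.

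\emph{Finishing, and the main obstacle.} By (ii), $\mathrm{Per}(f)$ lies in an open set of normal points and is therefore disjoint from the closure of the abnormal set $B$; since $\{f^{N}(x):N\ge0\}\subseteq B$, its closure $K$ is a non-empty closed forward invariant set disjoint from $\mathrm{Per}(f)$, so any minimal subset $M\subseteq K$ (Zorn) cannot be a periodic orbit and must be an \emph{infinite} minimal set inside $J(f)$. The crux — and what I expect to be the hardest step — is to rule this out, equivalently to show that after the reduction the set $\mathcal T$ of (i) must collapse to a non-empty finite forward invariant set and so contradict \eqref{equ_nonexceptional}; this is where the structure built into $\sA$ enters decisively: non-flatness of the critical points together with Hölder continuity of $Df$ excludes wandering intervals, while topological exactness forbids restrictive intervals and hence renormalization, so $f|_{J(f)}$ carries no infinite minimal set, yielding the desired contradiction. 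The only routine ingredient is the finite-to-one property of $f$ used to make the backward-branch trees in (i) locally finite.
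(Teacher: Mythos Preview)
Your periodic case and observations (i)--(ii) are correct, but the finishing step contains a genuine gap: the claim that $f|_{J(f)}$ ``carries no infinite minimal set'' is false. Absence of wandering intervals and absence of restrictive intervals (renormalization) do \emph{not} preclude infinite minimal subsets. Indeed, any uniformly expanding map already furnishes counterexamples: the full tent map (or a smooth expanding analogue in $\sA$) has $J(f)=I$, is topologically exact, has no wandering or restrictive intervals, yet---being semi-conjugate to the full $2$-shift---it contains uncountably many infinite minimal subsets (e.g.\ Sturmian or Toeplitz minimal sets). So from your compact forward-invariant $K\subset J(f)$ disjoint from $\mathrm{Per}(f)$ you cannot derive a contradiction this way, and your set $\mathcal T$ in (i), while satisfying $f^{-1}(\mathcal T)\setminus\mathcal T\subseteq\Lambda$, genuinely may be infinite, so hypothesis~\eqref{equ_nonexceptional} does not apply to it.

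What is missing is a mechanism forcing the set of $\Lambda$-abnormal points itself to be \emph{finite}. The paper supplies this by a purely combinatorial cardinality bound: putting $S_\Lambda:=\{x:\ x\text{ is }\Lambda\text{-abnormal}\}$, one first checks (essentially your (i)) that $f^{-1}(S_\Lambda)\setminus S_\Lambda\subseteq\Lambda$, and then proves $\sharp S_\Lambda\le 3\sharp\Lambda+4$ via two lemmas: an abstract counting lemma for self-maps of finite sets relating $\sharp(\Xi\cap\pi(\Xi))$, $\sharp(\Xi\setminus\pi(\Xi))$, and the rank-$1$ preimage locus, together with an interval-specific fact (using topological exactness and the order structure of $\R$) that the rank-$1$ preimage set $\maltese$ of $f$ satisfies $\sharp(\maltese\cap f(\maltese))\le 4$. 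Once $S_\Lambda$ is finite, any forward orbit that stays in $S_\Lambda$ is pre-periodic, and one builds from it a finite forward-invariant $A_x\subseteq S_\Lambda$ with $f^{-1}(A_x)\setminus A_x\subseteq\Lambda$, contradicting~\eqref{equ_nonexceptional}. This finiteness of $S_\Lambda$ is the key idea your argument lacks; the dynamical ``no infinite minimal set'' route cannot substitute for it.
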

The proof of Proposition \ref{lem_postercritical} depends on several
Lemmas, and will be given in the end of this section.

\medskip

For brevity, given a map $\pi:\Xi\to\hat{\Xi}$, denote by $
\maltese$ the \emph{rank 1 pre-image set},
$\maltese:=\{\xi\in\Xi:\sharp\pi^{-1}(\xi)=1\}$, where $\sharp A$
means the cardinality of a finite set $A$.

\begin{lemm}\label{lem_tec}
Let $\hat{\Xi}$ be a finite set, $\Xi$ a subset of $\hat{\Xi}$,
$\pi:\Xi\to\hat{\Xi}$ a map, and $\maltese$ is the resulting rank 1
pre-image set, then
\begin{equation}\label{equ_relationship on cardnality}
\sharp(\Xi\cap\pi(\Xi))\leq
3\sharp(\Xi\backslash\pi(\Xi))+\sharp(\maltese\cap\pi(\maltese)).
\end{equation}
\end{lemm}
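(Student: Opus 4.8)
The plan is to forget the dynamics entirely and treat \eqref{equ_relationship on cardnality} as a counting statement about the map $\pi\colon\Xi\to\hat{\Xi}$, organized by the size of the fibres. Since $\pi^{-1}(\xi)\subseteq\Xi$ for every $\xi$, the three sets $S:=\Xi\setminus\pi(\Xi)$ (empty fibre), $\maltese=\{\xi\in\Xi:\sharp\pi^{-1}(\xi)=1\}$, and $B:=\{\xi\in\Xi:\sharp\pi^{-1}(\xi)\geq 2\}$ partition $\Xi$, and $\Xi\cap\pi(\Xi)=\maltese\cup B$ is the set of elements of $\Xi$ with nonempty fibre. So the left-hand side of \eqref{equ_relationship on cardnality} equals $\sharp\maltese+\sharp B$, and the whole statement reduces to the two inequalities
\[
\sharp B\leq\sharp S\qquad\text{and}\qquad\sharp\maltese\leq 2\sharp S+\sharp\bigl(\maltese\cap\pi(\maltese)\bigr).
\]

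For the first inequality I would count the edges of the functional graph of $\pi$. Each element of $\Xi$ lies in exactly one fibre, so $\sum_{\eta\in\hat{\Xi}}\sharp\pi^{-1}(\eta)=\sharp\Xi$; restricting the sum to $\eta\in\Xi\subseteq\hat{\Xi}$ gives $\sum_{\eta\in\Xi}\sharp\pi^{-1}(\eta)\leq\sharp\Xi$. The left-hand side of this is $\sharp\maltese+\sum_{\eta\in B}\sharp\pi^{-1}(\eta)\geq\sharp\maltese+2\sharp B$, while the partition gives $\sharp\Xi=\sharp S+\sharp\maltese+\sharp B$. Comparing the two yields $\sharp\maltese+2\sharp B\leq\sharp S+\sharp\maltese+\sharp B$, i.e. $\sharp B\leq\sharp S$.

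For the second inequality I would construct an injection $\rho\colon\maltese\setminus\pi(\maltese)\to S\cup B$. For $\xi\in\maltese$ let $\rho(\xi)$ be the unique element of $\pi^{-1}(\xi)$, which is a well-defined element of $\Xi$. If in addition $\xi\notin\pi(\maltese)$, then $\rho(\xi)\notin\maltese$, for otherwise $\xi=\pi(\rho(\xi))\in\pi(\maltese)$; hence $\rho(\xi)\in S\cup B$. Injectivity is immediate, since $\rho(\xi_1)=\rho(\xi_2)=w$ forces $\xi_1=\pi(w)=\xi_2$. Therefore $\sharp(\maltese\setminus\pi(\maltese))\leq\sharp(S\cup B)=\sharp S+\sharp B\leq 2\sharp S$, using the first inequality, and adding $\sharp(\maltese\cap\pi(\maltese))$ to both sides gives $\sharp\maltese\leq 2\sharp S+\sharp(\maltese\cap\pi(\maltese))$.

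Combining, $\sharp(\Xi\cap\pi(\Xi))=\sharp\maltese+\sharp B\leq 3\sharp S+\sharp(\maltese\cap\pi(\maltese))$, which is \eqref{equ_relationship on cardnality}. There is no analytic or dynamical content here; the only things that need care are the bookkeeping point that fibres of $\pi$ land inside $\Xi$ (so $S$, $\maltese$, $B$ genuinely partition $\Xi$) and the observation that the unique preimage of a point in $\maltese\setminus\pi(\maltese)$ cannot itself lie in $\maltese$ — this last point is exactly what produces the corrective term $\sharp(\maltese\cap\pi(\maltese))$ on the right-hand side, and identifying the right injection $\rho$ is the one step I would expect to be not entirely obvious.
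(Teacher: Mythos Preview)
Your argument is correct and is considerably more direct than the paper's. The paper proves \eqref{equ_relationship on cardnality} by induction on $\sharp\Xi$: it defines, for each $\xi\in\Xi$, the least time $n(\xi)$ at which the $\pi$-orbit of $\xi$ leaves $\Xi$ or becomes periodic, picks a point realizing the maximum $n(\Xi)$, and then deletes a carefully chosen set of points to reduce to a smaller $\tilde{\Xi}$; the bookkeeping splits into three cases (with further subcases) according to whether $n(\Xi)=1$, or $n(\Xi)\geq 2$ with the image point outside or inside $\maltese$. Each case tracks how the four quantities $\sharp(\Xi\cap\pi(\Xi))$, $\sharp(\Xi\setminus\pi(\Xi))$, $\sharp(\maltese\cap\pi(\maltese))$, and $\sharp\Xi$ change under the deletion.

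By contrast, you bypass the induction entirely by partitioning $\Xi$ according to fibre size and proving the two clean inequalities $\sharp B\leq\sharp S$ (a one-line double count) and $\sharp(\maltese\setminus\pi(\maltese))\leq\sharp S+\sharp B$ (via the injection $\rho$). Your approach is shorter, more transparent about where each constant comes from, and makes it visible that the term $\sharp(\maltese\cap\pi(\maltese))$ is exactly compensating for the rank-one points whose unique preimage is again rank-one. The paper's inductive proof, on the other hand, is closer in spirit to how the lemma is later used (following backward orbits until they hit a branching or an exit), but that structural resonance is not needed for the lemma itself.
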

\begin{proof}
We proceed by induction on the cardinality of $\Xi$. The case
$\sharp\Xi=1$ is trivial. Let $n\geq 2$ be an integer such that the
desired inequality is satisfied for every set of cardinality $n-1$.
Let $\Xi$ be a set of cardinality $n$, and let $\pi$ and $\maltese$
be as in the statement of the lemma.

For each $\xi$ in $\Xi$ we define an integer $n(\xi)$ as follows: if
$\xi$ is in $\hat{\Xi}\backslash\Xi$, or if $\xi$ is periodic for
$\pi$, put $n(\xi)=0$; if $\xi$ is in $\Xi$ and is not periodic for
$\pi$, let $n(\xi)$ be the least integer $n\geq 1$ such that
$\pi^n(\xi)$ is in $\hat{\Xi}\backslash\Xi$ or is periodic for
$\pi$. Moreover, put
$$
n(\Xi):=\max\{n(\xi):\xi\in\Xi\}.
$$

If $n(\Xi)=0$, then every point of $\Xi$ is periodic for $\pi$, so
$\pi(\Xi)=\Xi$, and therefore $\maltese=\Xi$. So the desired
inequality is verified in this case.

Suppose $n(\Xi)\geq 1$, let $\tilde{\xi_0}$ in $\Xi$ be such that
$n(\tilde{\xi_0})=n(\Xi)$, and put
$\tilde{\xi}:=\pi(\tilde{\xi_0})$. There are 3 cases.

{\bf Case 1.} $n(\Xi)=1$. Then $\tilde{\xi}$ is in
$\hat{\Xi}\backslash\Xi$, or is periodic for $\pi$. In both cases
the set
$$
\Delta:=\{\xi\in\pi^{-1}(\tilde{\xi}):n(\xi)=1\}
$$
is nonempty and disjoint from $\pi(\Xi)$. If $\Delta$ is equal to
$\Xi$, then $\pi(\Xi)\cap\Xi$ is empty and the desired inequality is
trival in this case. So we assume that the set
$\tilde{\Xi}:=\Xi\backslash\Delta$ is nonempty. Put
$$
\tilde{\pi}:=\pi|_{\tilde{\Xi}} \mbox{~and~}
\tilde{\maltese}:=\{\xi\in\tilde{\Xi}:\sharp\tilde{\pi}^{-1}(\xi)=1\}.
$$
Clearly, we have
$$
\tilde{\Xi}\cap\tilde{\pi}(\tilde{\Xi})=\Xi\cap\pi(\Xi).
$$
On the other hand, if $\tilde{\xi}$ is in $\hat{\Xi}\backslash\Xi$,
then $\tilde{\maltese}=\maltese$, and if $\tilde{\xi}$ is periodic
for $\pi$, then $\tilde{\maltese}=\maltese\cup\{\tilde{\xi}\}$, and
therefore
$$
\tilde{\maltese}\cap\tilde{\pi}(\tilde{\maltese})=\left(\maltese\cup\{\tilde{\xi}\}\right)\cap\left(\pi(\maltese)\cup\{\pi(\tilde{\xi})\}\right)\subset(\maltese\cap\pi(\maltese))\cup\{\tilde{\xi},\pi(\tilde{\xi})\}
$$
In both cases we have
$$
\sharp(\tilde{\maltese}\cap\tilde{\pi}(\tilde{\maltese}))\leq\sharp(\maltese\cap\pi(\maltese))+2.
$$
So, by the induction hypothesis and the fact that $\Delta$ is
nonempty and disjoint from $\pi(\Xi)$, we have
\begin{equation*}
\begin{array}{ccl}
\sharp(\Xi\cap\pi(\Xi))=\sharp(\tilde{\Xi}\cap\tilde{\pi}(\tilde{\Xi}))& \leq & 3\sharp(\tilde{\Xi}\backslash\tilde{\pi}(\tilde{\Xi}))+\sharp(\tilde{\maltese}\cap\tilde{\pi}(\tilde{\maltese})\\
& \leq & 3(\sharp(\Xi\backslash\pi(\xi))-\sharp\Delta)+\sharp(\maltese\cap\pi(\maltese))+2\\
& \leq &
3\sharp(\Xi\backslash\pi(\Xi))+\sharp(\maltese\cap\pi(\maltese))-\sharp\Delta.
\end{array}
\end{equation*}
This implies the desired inequality for $\pi$, and completes the
proof of the induction step in the case $n(\Xi)=1$.

\medskip

{\bf Case 2.} $n(\Xi)\geq 2$ and $\tilde{\xi}\notin\maltese$. In
this case the set $\pi^{-1}(\tilde{\xi})$ is disjoint from
$\pi(\Xi)$ and the set
$\tilde{\Xi}:=\Xi\backslash\pi^{-1}(\tilde{\xi})$ is nonempty. On
the other hand, if we put
$$
\tilde{\pi}:=\pi|_{\tilde{\Xi}} \mbox{~and~}
\tilde{\maltese}:=\{\xi\in\tilde{\Xi}:\sharp\tilde{\pi}^{-1}(\xi)=1\},
$$
then
$$
\tilde{\pi}(\tilde{\Xi})=\pi(\Xi)\backslash\{\tilde{\xi}\}
\mbox{~and~} \tilde{\maltese}=\maltese.
$$
Therefore we have
$$
\tilde{\Xi}\cap\tilde{\pi}(\tilde{\Xi})=(\Xi\cap\pi(\Xi))\backslash\{\tilde{\xi}\},
$$
$$
\tilde{\Xi}\backslash\tilde{\pi}(\tilde{\Xi})=\left(\Xi\backslash\pi^{-1}(\tilde{\xi})\right)\backslash\left(\pi(\Xi)\backslash\{\tilde{\xi}\}\right)=\left((\Xi\backslash\pi(\Xi))\cup\{\tilde{\xi}\}\right)\backslash\pi^{-1}(\tilde{\xi}),
$$
and therefore
$$
\sharp\left(\tilde{\Xi}\backslash\tilde{\pi}(\tilde{\Xi})\right)=\sharp(\Xi\backslash\pi(\Xi))-(\sharp\pi^{-1}(\tilde{\xi})-1).
$$
So, by the induction hypothesis we have
\begin{equation*}
\begin{array}{ccl}
\sharp(\Xi\cap\pi(\Xi))-1& =& \sharp(\tilde{\Xi}\cap\tilde{\pi}(\tilde{\Xi})\\
& \leq & 3\sharp\left(\tilde{\xi}\backslash\tilde{\pi}(\tilde{\Xi})\right)+\sharp\left(\tilde{\maltese}\cap\tilde{\pi}(\tilde{\maltese})\right)\\
& = & 3\sharp\left(\Xi\backslash\pi(\Xi)\right)-3\left(\sharp\pi^{-1}(\tilde{\xi})-1\right)+\sharp\left(\maltese\cap\pi(\maltese)\right)\\
& \leq &
3\sharp(\Xi\backslash\pi(\Xi))+\sharp(\maltese\cap\pi(\maltese))-3
\end{array}
\end{equation*}
This implies the desired inequality for $\pi$, and completes the
proof of the induction step in the case $n(\Xi)\geq 2$ and
$\tilde{\xi}$ is not in $\maltese$.

\medskip

{\bf Case 3.} $n(\xi)\geq 2$ and $\tilde{\xi}\in\maltese$. In this
case $\tilde{\xi_0}$, defined above, is the unique element of
$\pi^{-1}(\tilde{\xi})$. Note that
$\pi^{n(\tilde{\xi_0})}(\tilde{\xi_0})$ is in
$\hat{\Xi}\backslash\Xi$ or is periodic for $\pi$. In particular, it
is not in $\maltese$. It follows that there is at least an integer
$n\geq 2$ such that $\pi^{n}(\tilde{\xi_0})$ is not in $\maltese$.
Put
\begin{eqnarray*}
\Delta:=\left\{\pi^{j}(\tilde{\xi_0}): j\in\{0,1,\cdots,n-1\}\right\},\\
\tilde{\Xi}:=\Xi\backslash\Delta, \tilde{\pi}:=\pi|_{\Xi},
\mbox{~and~}
\tilde{\maltese}:=\{\xi\in\tilde{\Xi}:\sharp\tilde{\pi}^{-1}(\xi)=1\},
\end{eqnarray*}
and note that
$$
\tilde{\Xi}\cap\tilde{\pi}(\tilde{\Xi})=(\Xi\cap\pi(\Xi))\backslash\left(\Delta\backslash\{\tilde{\xi_0}\}\right),
~~\tilde{\Xi}\backslash\tilde{\pi}(\tilde{\Xi})=(\Xi\backslash\pi(\Xi))\backslash\{\tilde{\xi_0}\},
$$
and
$$
(\maltese\cap\pi(\maltese))\cap\Delta=\left\{\pi^j(\tilde{\xi_0}):j\in\{2,\cdots,n-1\}\right\}.
$$

Suppose $n=n(\tilde{\xi_0})$. If $\tilde{\Xi}$ is empty, then we
have
$$
\Xi\cap\pi(\Xi)=\maltese\cap\pi(\maltese)=\Delta\backslash\{\tilde{\xi_0}\},
$$
and the desired inequality holds in this case. If $\tilde{\Xi}$ is
nonempty, then we have
$$
\tilde{\maltese}=\maltese\backslash\left(\Delta\backslash\{\tilde{\xi_0}\}\right),
\mbox{~and~}
\tilde{\maltese}\cap\tilde{\pi}(\tilde{\maltese})=(\maltese\cap\pi(\maltese))\backslash\left(\Delta\backslash\{\tilde{\xi_0}\}\right).
$$
So, by the induction hypothesis we have
\begin{equation*}
\begin{array}{ccl}
\sharp(\Xi\cap\pi(\Xi))-(n-1) & = & \sharp(\tilde{\Xi}\cap\tilde{\pi}(\tilde{\Xi})\\
& \leq & 3\sharp(\tilde{\Xi}\backslash\tilde{\pi}(\tilde{\Xi}))+\sharp(\tilde{\maltese}\cap\tilde{\pi}(\tilde{\maltese})\\
& = & 3\sharp(\xi\backslash\pi(\Xi))-3(n-1)+\sharp(\maltese\cap\pi(\maltese))-(n-1)\\
& =&
3\sharp(\Xi\backslash\pi(\Xi))+\sharp(\maltese\cap\pi(\Xi))-4(n-1).
\end{array}
\end{equation*}
This implies the desired inequality when $n=n(\tilde{\xi_0})$.

It remains to consider the case $n\leq n(\tilde{\xi_0})-1$. In this
case $\tilde{\Xi}$ is nonempty, and we have
$$
\tilde{\maltese}\subset\left(\maltese\cap\tilde{\Xi}\right)\cup\left\{\pi^n(\tilde{\xi_0})\right\},
$$
and
\begin{eqnarray*}
\tilde{\maltese}\cap\tilde{\pi}(\tilde{\maltese}) & \subset & \left(\maltese\cap\pi(\maltese)\cap\pi(\tilde{\Xi})\right)\cup\left\{\pi^n(\tilde{\xi_0}),\pi^{n+1}(\tilde{\xi_0})\right\}\\
& = &
\left(\maltese\cap\pi(\maltese)\backslash\left\{\pi^j(\tilde{\xi_0}):j\in\{2,\cdots,n-1\}\right\}\right)\cup\left\{\pi^n(\tilde{\xi_0}),\pi^{n+1}(\tilde{\xi_0})\right\},
\end{eqnarray*}
and therefore
$$
\sharp(\tilde{\maltese}\cap\tilde{\pi}(\tilde{\maltese}))\leq\sharp(\maltese\cap\pi(\maltese))-n+4.
$$
So, by the induction hypothesis we have
\begin{eqnarray*}
\sharp(\Xi\cap\pi(\xi))-(n-1) & =& \sharp(\tilde{\Xi}\cap\tilde{\pi}(\tilde{\Xi}))\\
& \leq & 3\sharp\left(\tilde{\Xi}\backslash\tilde{\pi}(\tilde{\Xi})\right)+\sharp\left(\tilde{\maltese}\cap\tilde{\pi}(\tilde{\maltese})\right)\\
& \leq & 3\sharp(\Xi\backslash\pi(\Xi))-3+\sharp(\maltese\cap\pi(\Xi))-n+4\\
& \leq &
3\sharp(\Xi\backslash\pi(\Xi))+\sharp(\maltese\cap\pi(\maltese))-(n-1).
\end{eqnarray*}
This implies the desired inequality for $\pi$, and completes the
proof of the induction step in the case $n(\Xi)\geq 2$ and
$\tilde{\xi}$ is in $\maltese$. The proof of the lemma is thus
complete.

\end{proof}

\medskip

\begin{lemm}\label{lem_rank1}
Let $f:J(f)\to J(f)$ be a continuous interval map with $\maltese$
the resulting rank 1 pre-image set. Put
$$
a_{\max}:=\max_{x\in J(f)}\{f(x)\}~~~\mbox{and}~~~
a_{\min}:=\min_{x\in J(f)}\{f(x)\}.
$$
If $f$ is topological exact on $J(f)$, then we have
\begin{equation}\label{equ_emptysetintersectionrank1}
f(\maltese)\cap\maltese\subseteq\{a_{\max},a_{\min},f(a_{\max}),f(a_{\min})\}.
\end{equation}
\end{lemm}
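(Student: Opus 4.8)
The plan is a proof by contradiction. Suppose $y\in f(\maltese)\cap\maltese$ but $y\notin\{a_{\min},a_{\max}\}$; I will produce a proper subinterval of $J(f)$ with nonempty interior that is forward invariant under $f$ or under $f^{2}$. No nonempty open subset of such an interval can be spread over all of $J(f)$ by forward iteration, so this contradicts topological exactness.

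First, preliminaries. Applying topological exactness to $U=J(f)$ gives $f(J(f))=J(f)$, so $f$ is onto $J(f)$ and $a_{\min}=\min J(f)$, $a_{\max}=\max J(f)$ are the endpoints of the interval $J(f)=[a_{\min},a_{\max}]$. Next, for $y\in\maltese$ with $y\notin\{a_{\min},a_{\max}\}$, write $f^{-1}(y)=\{x\}$; on each interval forming $J(f)\setminus\{x\}$ the continuous function $f-y$ is nonvanishing, hence of constant sign. Since $a_{\min}<y<a_{\max}$ and no point attaining $a_{\min}$ or $a_{\max}$ can equal $x$ (its $f$-value is $\ne y$), such points lie in the pieces where $f<y$, resp. $f>y$; hence $x$ is interior and we are in one of two ``folding'' patterns: (I) $f>y$ on $[a_{\min},x)$ and $f<y$ on $(x,a_{\max}]$, whence $f([a_{\min},x])=[y,a_{\max}]$ and $f([x,a_{\max}])=[a_{\min},y]$; or (II) the reverse, whence $f([a_{\min},x])=[a_{\min},y]$ and $f([x,a_{\max}])=[y,a_{\max}]$. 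The image identities follow because on each piece $f$ is squeezed between $y$ and the global extremum attained there. In particular $y\ne f(a_{\min})$ and $y\ne f(a_{\max})$ already hold (in pattern (I), $a_{\min}\in[a_{\min},x)$ gives $f(a_{\min})>y$), so it is enough to exclude $y\notin\{a_{\min},a_{\max}\}$.

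Pattern (II) is eliminated immediately: if $y\ge x$ then $f([x,a_{\max}])=[y,a_{\max}]\subseteq[x,a_{\max}]$; if $y\le x$ then $f([a_{\min},x])=[a_{\min},y]\subseteq[a_{\min},x]$; either way a proper $f$-invariant subinterval with nonempty interior appears. So we are in pattern (I). Now use $y\in f(\maltese)$: since $f^{-1}(y)=\{x\}$, this forces $x\in\maltese$, so $f^{-1}(x)=\{x'\}$ for one point $x'$. Applying the folding analysis to the value $x$, and using what pattern (I) says about where $a_{\min},a_{\max}$ are attained, gives $f([a_{\min},x'])=[x,a_{\max}]$ and $f([x',a_{\max}])=[a_{\min},x]$, with $x'>x$. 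After replacing $f$ if necessary by its conjugate under $t\mapsto a_{\min}+a_{\max}-t$ (another topologically exact interval map, with the same $a_{\min},a_{\max}$, still of pattern (I), and swapping $y>x$ with $y<x$), we may assume $y\ge x$. Three cases: if $y=x$ then $f([a_{\min},y])=[y,a_{\max}]$ and $f([y,a_{\max}])=[a_{\min},y]$, so $[a_{\min},y]$ is $f^{2}$-invariant; if $x<y<x'$ then $[x,y]\subseteq[a_{\min},x']\cap[x,a_{\max}]$, hence $f([x,y])\subseteq[x,a_{\max}]\cap[a_{\min},y]=[x,y]$, so $[x,y]$ is $f$-invariant; if $x<x'\le y$ then $f([a_{\min},x])=[y,a_{\max}]\subseteq[x',a_{\max}]$ and $f([x',a_{\max}])=[a_{\min},x]$, so $f^{2}([a_{\min},x])\subseteq[a_{\min},x]$. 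In each case the trapped interval, and its $f$-image in the $f^{2}$-invariant cases, is a proper subinterval with nonempty interior, and topological exactness is contradicted.

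The sign bookkeeping of the first step is routine; the delicate part is the final case analysis on the relative order of $x$, $y$ and $x'=f^{-1}(x)$ — especially the case $x<x'\le y$, where no single application of $f$ traps an interval and one must pass to $f^{2}$. I expect that to be the main obstacle, with the reflection symmetry mainly keeping the case count small. (If $J(f)$ is not literally an interval, the same argument runs using the order structure of $\R$ together with complete invariance of $J(f)$ — so that preimages of points of $J(f)$ remain in $J(f)$ — with every interval replaced by its trace on $J(f)$; this accounts for the extra points $f(a_{\min})$, $f(a_{\max})$ allowed in the statement.)
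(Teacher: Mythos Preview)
Your argument is correct and follows essentially the same route as the paper's proof: both proceed by contradiction, exploit the ``folding'' structure forced by a rank~$1$ preimage (the value is a one-sided extremum of $f$ on each half-interval), and then use a case analysis on the relative order of the three points $y,\,x=f^{-1}(y),\,x'=f^{-1}(x)$ to exhibit a proper sub-interval that is forward invariant under $f$ or $f^{2}$, contradicting topological exactness. Your $(y,x,x')$ is the paper's $(x,y,z)$; your patterns (I)/(II) correspond to the paper's two cases ``$x$ is a minimum/maximum of $f|_{[0,y]}$'', and your three subcases in pattern~(I) match the paper's Subcases~1,~2 and Case~2 after the symmetry reduction. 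The only notable differences are cosmetic: you dispatch pattern~(II) before invoking the second preimage, and you use the reflection $t\mapsto a_{\min}+a_{\max}-t$ to halve the order cases, whereas the paper fixes $z<y$ and argues directly. Your observation that in the genuine interval case one never needs the extra exceptions $f(a_{\min}),f(a_{\max})$ is correct; the paper's proof uses the exclusion $x\notin\{f(a_{\min}),f(a_{\max})\}$ only to force the preimage $y$ away from $\{a_{\min},a_{\max}\}$, which you obtain for free from the sign analysis. Your parenthetical about general $J(f)$ is also in line with the paper, which invokes complete invariance at the same step; note that for the invariance conclusions you only need the containments $f([a_{\min},x]\cap J(f))\subseteq[y,a_{\max}]\cap J(f)$ etc., not equalities, so the trace-on-$J(f)$ version goes through unchanged.
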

\begin{proof} We prove the lemma
by contradiction. Suppose on the contrast that there is a point
$x\in
\big(f(\maltese)\cap\maltese\big)\backslash\{a_{\max},a_{\min},f(a_{\max}),f(a_{\min})\}$.
By the definition of $\maltese$, there exist a unique pre-image
$y\in \maltese\backslash\{a_{\max},a_{\min}\}$ with $f(y)=x,$ and a
unique pre-image $z$ with $f(z)=y$. We first show that $x,y,z$ are
pairwise distinct. Otherwise, if say $x=y$ then $x$ is a fixed point
and its unique pre-image is itself. Hence
$\cup_{i=1}^{\infty}f^{-i}(x)=\{x\}$, which is a contradiction to
the topological exactness. The remaining cases ``$y\neq z$'' and
``$x\neq z$'' are similarly supplied.

Next, we show that
\begin{enumerate}
  \item $x$ must be an extreme value of both $f|_{[0,y]}$ and
  $f|_{[y,1]}$;
  \item $x$ is a maximum (resp. minimum) of $f|_{[0,y]}$, if and only if
$x$ is a minimum (resp. maximum) of $f|_{[y,1]}$.
\end{enumerate}
For Statement $(a)$, we only prove $f$ is an extreme value of
$f|_{[0,y]}$, the other case is similarly supplied. Suppose on the
contrast that $x$ is neither a maximum nor a minimum at
$f|_{[0,y]}$. Using intermediate value theorem and the complete
invariance of $J(f)$, there is another point $d$ in $J$ differs from
$y$ such that $f(d)=f(y)=x$, which is a contradiction to $x$ being
rank 1-pre-image. For Statement $(b)$, notice that
$x\notin\{a_{\max},a_{\min}\}$, so $x$ is neither a maximum nor a
minimum of both $f|_{[0,y]}$ and $f|_{[y,1]}$ simultaneously.
Analogously, we have
\begin{enumerate}
  \item[$(a')$] $y$ must be an extreme value of both $f|_{[0,z]}$ and
  $f_{[z,1]}$;
  \item[$(b')$] $y$ a maximum (resp. minimum) of $f|_{[0,z]}$, if and only
  if
$y$ is a minimum (resp. maximum) of $f|_{[z,1]}$.
\end{enumerate}
In the rest of the proof, we will show that $f$ has a proper forward
invariant open subset in $J(f)$. This is a contradiction to the
topological exactness, and the desired assertion follows. In order
to do that, we need to distinguish the order of $x,y,z$ on the real
line. There are two possibilities, namely Case ``$z<y$'' and Case
``$y<z$''. Without loss of generality, it is sufficient to prove
Case ``$z<y$''. The proof of the remaining case can be simply
deduced by interchanging the positions of $z$ and $y$ in the proof
below.

Using the hypothesis ``$z<y$'', we have
$$
J=([a_{\min},z]\cap J)\bigcup([z,y]\cap J)\bigcup([y,a_{\max}]\cap
J).
$$
In the following, we distinguish two cases.

{\bf Case 1.} $x$ is a minimum of $f|_{[0,y]}$. Thus $x$ must be a
maximum of $f|_{[y,1]}$. We claim that $y$ must be a minimum of
$f|_{[0,z]}$ and a maximum of $f|_{[z,1]}$. This claim can be proved
by contradiction. Otherwise, suppose $y$ is a maximum of
$f|_{[0,z]}$ and a minimum of $f|_{[z,1]}$. Due to our hypothesis
$z<y$, we have $[0,z]\subset[0,y]$ and $[y,1]\subset[z,1]$. However,
the former statement yields $x<y$, while the latter statement yields
$x>y$, which is evidently impossible. Therefore, we obtain the
claim, and thus we have $x<y$. In other words $x$ must be outside
$[y,a_{\max}]\cap J$. There are further two possibilities.
\begin{itemize}
  \item[{\bf Subcase 1.}] $x\in[z,y]\cap J$. Then $x$ must be a minimum of $f|_{[z,y]}$ and $y$
must be a maximum of $f|_{[z,y]}$. Hence,
$$f([z,y]\cap~J)\subseteq f([z,y])\cap~J=[x,y]\cap~J\subseteq [z,y]\cap~J.$$
This implies that $[z,y]\cap J$ is a proper open subset of $J$
forward invariant under $f$;
  \item[{\bf Subcase 2.}] $x\in[a_{\min},z]\cap J$. Then $y$ must be a minimum
of $f|_{[0,z]}$ and $x$ must be a maximum of $f|_{[y,1]}$. Hence,
$$
f^{2}([a_{\min},z]\cap ~J)\subseteq f([y,a_{\max}]\cap J)\subseteq
[a_{\min},x]\cap~J\subseteq [a_{\min},z]\cap J.
$$
This also implies that $[a_{\min},z]\cap J$ is a proper open subset
of $J$ forward invariant under $f^{2}$.
\end{itemize}
In both subcases, $f$ has a proper open and forward invariant subset
of $J$.
\medskip

{\bf Case 2.} $x$ is a maximum of $f|_{[0,y]}$. Thus $x$ must be a
minimum of $f|_{[y,1]}$. Followed by similarly arguments as above,
$y$ must be a maximum of $f|_{[0,z]}$ and a minimum of $f|_{[z,1]}$.
Hence $y<x$. In other words, $x$ must be in $[y,a_{\max}]\cap J.$
Therefore,
$$
f([y,a_{\max}]\cap J)\subseteq [x,a_{\max}]\cap J\subseteq
[y,a_{\max}]\cap J.
$$
This implies that $[y,a_{\max}]\cap J$ is a proper open subset of
$J$ forward invariant under $f$.

\medskip

As a conclusion, we show that $f$ has a proper open forward
invariant subset of $J(f)$ in both cases, which is a contradiction
to the topological exactness on $J(f)$. This contradiction yields
the desired assertion \eqref{equ_emptysetintersectionrank1}, and
completes the proof of this lemma.

\end{proof}

\medskip

We are now ready to prove Proposition \ref{lem_postercritical}.

\medskip

\begin{proof}[Proof of Proposition \ref{lem_postercritical}]
The proof is split into 2 parts. In part 1, put
\begin{equation}\label{equ_weakexceptional}
S_{\Lambda}:=\big\{x\in J(f),~x~\mbox{is $\Lambda$-abnormal}\big\},
\end{equation}
and we show that
\begin{equation}\label{equ_cardnality}
\sharp S_{\Lambda}\leq 3\sharp\Lambda+4.
\end{equation}
In part 2, we use \eqref{equ_cardnality} to show the desired assertions of the Proposition \ref{lem_postercritical}.

\medskip

{\bf 1.} We first proceed by contradiction to show that
\begin{equation}\label{equ_weakexceptionality}
f^{-1}(S_{\Lambda})\backslash S_{\Lambda}\subset\Lambda.
\end{equation}
Otherwise, there is a point $x\in f^{-1}(S_{\Lambda})\backslash
S_{\Lambda}$, but $x\notin\Lambda$. This implies that $x\notin
S_{\Lambda}\cup\Lambda$, but $f(x)\in S_{\Lambda}$. However, this is
a contradiction to the abnormality of $x$. Thus we obtain
\eqref{equ_weakexceptionality}.

On the other hand, put $\Xi:=S_{\Lambda}\cup f^{-1}(S_{\Lambda})$.
So
\begin{equation}\label{equ_set}
S_{\Lambda}\subset \Xi\cap f(\Xi)~~\mbox{and}~~\Xi\backslash
f(\Xi)\subset f^{-1}(S_{\Lambda})\backslash S_{\Lambda}.
\end{equation}
Recall $\maltese$ to be the rank 1 pre-image set of $f$. Therefore,
\begin{alignat*}{2}
\sharp S_{\Lambda}&\leq \sharp(\Xi\cap
f(\Xi))~~~~~~~~~~~~~~~~~&\text{(Using \eqref{equ_set})}\\
&\leq3\sharp(\Xi\backslash f(\Xi))+\sharp(\maltese\cap
f(\maltese))~~~&\text{(Using Lemma \ref{lem_tec})}\\
&\leq 3\sharp(f^{-1}(S_{\Lambda})\backslash
S_{\Lambda})+4~~~~~&\text{(Using \eqref{equ_set} and Lemma
\ref{lem_rank1})}\\
&\leq 3\sharp\Lambda+4 ~~~~~~~~~&\text{(Using Inequality
\eqref{equ_weakexceptionality})},
\end{alignat*}
which completes the proof of \eqref{equ_cardnality}.

\medskip

{\bf 2.} In this part, we first show the following claim.
\begin{claim}\label{claim_nearly}
For every $x\in J(f)$, there is a non-negative integer $N$, such
that $f^{N}(x)$ is $\Lambda$-normal.
\end{claim}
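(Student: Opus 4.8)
The plan is to prove Claim~\ref{claim_nearly} by contradiction, extracting from a hypothetical ``bad'' point a non-empty forward invariant finite set that violates the non-exceptionality hypothesis~\eqref{equ_nonexceptional}. The two ingredients already in hand are the finiteness of the abnormal set $S_{\Lambda}$ (inequality~\eqref{equ_cardnality}) and the near-invariance relation $f^{-1}(S_{\Lambda})\setminus S_{\Lambda}\subseteq\Lambda$ from~\eqref{equ_weakexceptionality}; the claim is then forced by playing these against~\eqref{equ_nonexceptional}.

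Concretely, suppose the claim fails for some $x\in J(f)$, so that $f^{n}(x)\in S_{\Lambda}$ for every $n\ge 0$, i.e.\ the whole forward orbit of $x$ is trapped in the finite set $S_{\Lambda}$. I would then set
$$
\Sigma:=\{y\in J(f):f^{n}(y)\in S_{\Lambda}\text{ for all }n\ge 0\},
$$
the set of points whose entire forward orbit stays in $S_{\Lambda}$, and verify four properties. First, $\Sigma\neq\emptyset$, since $x\in\Sigma$. Second, $\Sigma\subseteq S_{\Lambda}$ (take $n=0$), so $\Sigma$ is finite by~\eqref{equ_cardnality}. Third, $\Sigma$ is forward invariant, because $f^{n}(f(y))=f^{n+1}(y)\in S_{\Lambda}$ for all $n\ge 0$ whenever $y\in\Sigma$. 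Fourth, $f^{-1}(\Sigma)\setminus\Sigma\subseteq\Lambda$: if $f(z)\in\Sigma$ but $z\notin\Sigma$, then $f^{n}(z)\in S_{\Lambda}$ for all $n\ge 1$ (as $f(z)\in\Sigma\subseteq S_{\Lambda}$), so the failure $z\notin\Sigma$ can only occur at time $0$, i.e.\ $z\notin S_{\Lambda}$; combining $f(z)\in S_{\Lambda}$ with $z\notin S_{\Lambda}$ gives $z\in f^{-1}(S_{\Lambda})\setminus S_{\Lambda}\subseteq\Lambda$ by~\eqref{equ_weakexceptionality}.

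These four properties say that $\Sigma$ is a non-empty forward invariant finite set with $f^{-1}(\Sigma)\setminus\Sigma\subseteq\Lambda$, which directly contradicts hypothesis~\eqref{equ_nonexceptional}; this proves Claim~\ref{claim_nearly}. I do not anticipate a genuine obstacle here: once $S_{\Lambda}$ is known to be finite the argument is purely set-theoretic. The only step that deserves care is the fourth property, where one must check that the failure $z\notin\Sigma$ localizes to time $0$, so that~\eqref{equ_weakexceptionality} can be applied — this is precisely the point at which the short preliminary computation in Part~1 pays off. (Throughout, ``forward invariant'' should be read as $f(\Sigma)\subseteq\Sigma$, matching the definition of exceptional set used in the paper.)
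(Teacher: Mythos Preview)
Your proof is correct and follows essentially the same contradiction strategy as the paper: assume the forward orbit of $x$ stays in the finite set $S_{\Lambda}$, then build from $S_{\Lambda}$ a non-empty forward invariant finite set violating~\eqref{equ_nonexceptional}. The only cosmetic difference is the choice of that set --- you take the maximal forward-invariant subset $\Sigma=\{y:f^{n}(y)\in S_{\Lambda}\ \forall n\ge 0\}$, whereas the paper uses $A_{x}=\{y\in S_{\Lambda}:\exists\,m\ge 0,\ f^{m}(y)\in\mathcal{O}_{x}\}$ --- but the verification of $f^{-1}(\cdot)\setminus(\cdot)\subseteq\Lambda$ via~\eqref{equ_weakexceptionality} is the same in both cases.
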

We proceed the proof of Claim \ref{claim_nearly} by contradiction.
Suppose on the contrast that there is a point $x\in J(f)$, such that
the forward orbit $\cO_{x}:=\{f^{n}(x)\}_{n=0}^{\infty}\subset
S_{\Lambda}$. Following from \eqref{equ_cardnality} in Part 1, the
orbit $\cO_{x}$ must be pre-periodic and finite. Put
$$
A_{x}:=\{y\in S_{\Lambda}:~~\exists m\geq0,~\mbox{such
that}~f^{m}(y)\in \cO_{x}\}.
$$

In order to obtain Claim \ref{claim_nearly}, it is sufficient to
verify that $A_{x}$ is non-empty, forward invariant finite and
satisfies
\begin{equation}\label{equ_exceptional}
f^{-1}(A_{x})\backslash A_{x}\subseteq\Lambda.
\end{equation}
This is a contradiction to the hypotheses
\eqref{equ_nonexceptional}, and thus Claim \ref{claim_nearly}
follows. The non-empty, forward invariance and finiteness of $A_{x}$
are straightforward from its definition. To verify
\eqref{equ_exceptional}, suppose on the contrast that there is a
point $z\in f^{-1}(A_{x})\backslash A_{x}$, but $z\notin\Lambda$.
This means that $z\notin A_{x}\cup\Lambda$, but $f(z)\in A_{x}$.
Hence $z\notin S_{\Lambda},$ and thus $f(z)\notin S_{\Lambda}$,
which is evidently impossible. This contradiction completes the
proof of \ref{equ_exceptional}, and thus we obtain Claim
\ref{claim_nearly}.
\medskip

Next, we show that the integer $N\geq0$ stated in Claim
\ref{claim_nearly} can be further adapted such that
$f^{N}(x)\notin\Lambda.$ There are two cases.
\begin{enumerate}
  \item[{\bf Case 1:}] $x$ is pre-periodic. Then let integer $N\geq0$ be the smallest
integer such that $f^{N}(x)$ being periodic. Note that
$\Lambda\subset\mbox{Crit}'(f)$ and there is no periodic critical
point in $J(f)$, thus $f^{N}(x)\notin\Lambda.$ Moreover, we further
claim that $f^{N}(x)$ is $\Lambda$-normal. In fact, following from
Claim \ref{claim_nearly}, there is an integer $\hat{N}\geq0$ such
that $f^{\hat{N}}(x)$ is periodic and $\Lambda$-normal. Then
$f^{\hat{N}+1}(x)$ is also $\Lambda$-normal. By periodicity,
$f^{N}(x)$ must be $\Lambda$-normal.
  \item[{\bf Case 2:}] $x$ is non-preperiodic.
Then there is a subsequence $\{n_{i}\}$ such that each
$f^{n_{i}}(x)$ is $\Lambda$-normal, and $f^{n_{i}}(x)\neq
f^{n_{j}}(x),\forall i\neq j$. By the finiteness of $\Lambda,$ there
is an integer $N\geq0$, such that $f^{N}(x)$ is $\Lambda$-normal and
not in $\Lambda$.
\end{enumerate}
In both cases, we obtain the desired assertions. Thus the proof of
Proposition \ref{lem_postercritical} is completed.
\end{proof}

\bigskip

We close this section mentioning the corollary below that perhaps
have independent interests. For each interval map $f\in\sA$, Recall
$\Sigma_{\max}$ the \emph{maximum exceptional set},
\begin{equation}\label{equ_maximum exceptional set}
\Sigma_{\max}:=\bigcup\big\{\Sigma\subset
J(f):\Sigma~\mbox{is}~\mbox{Crit'(f)-exceptional}\big\}.
\end{equation}
\begin{coro}\label{coro_universal bound}
Let $\sA_{n}:=\{f\in\sA,~~\mbox{Crit}'(f)\leq n\}$, then
\begin{equation}\label{equ_universal bound}
    \sup_{f\in\sA_{n}}\sharp\Sigma_{\max}(f)\leq 3n+4.
\end{equation}
\end{coro}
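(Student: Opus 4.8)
The plan is to prove the bound first for an \emph{individual} exceptional set and then bootstrap to $\Sigma_{\max}$ by a union argument. Write $\Lambda:=\mbox{Crit}'(f)$, so $\sharp\Lambda\leq n$ for $f\in\sA_{n}$; recall that $f$ is surjective on $J(f)$ (complete invariance) and that the pre-image of a finite set under $f$ is finite. Let $\Sigma$ be any $\Lambda$-exceptional set, so $\Sigma$ is finite, $f(\Sigma)\subseteq\Sigma$, and $\emptyset\neq f^{-1}(\Sigma)\setminus\Sigma\subseteq\Lambda$ by \eqref{equ_invariance}. Put $\Xi:=f^{-1}(\Sigma)$ (a finite set, containing $\Sigma$ by forward invariance) and $\pi:=f|_{\Xi}$; surjectivity of $f$ on $J(f)$ gives $\pi(\Xi)=f(f^{-1}(\Sigma))=\Sigma\subseteq\Xi$, so we may apply Lemma~\ref{lem_tec} with $\hat\Xi=\Xi$. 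Here $\Xi\cap\pi(\Xi)=\Sigma$, $\Xi\setminus\pi(\Xi)=f^{-1}(\Sigma)\setminus\Sigma\subseteq\Lambda$, and the rank~$1$ pre-image set $\maltese_{\pi}$ of $\pi$ satisfies $\maltese_{\pi}=\Sigma\cap\maltese$, where $\maltese$ is the rank~$1$ pre-image set of $f$ on $J(f)$: indeed a point $\xi\in\Xi$ with $f(\xi)\notin\Sigma$ has no $\pi$-pre-image, while for $\xi\in\Sigma$ every global pre-image of $\xi$ already lies in $f^{-1}(\Sigma)=\Xi$, so $\pi^{-1}(\xi)=f^{-1}(\xi)$. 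Hence $\maltese_{\pi}\cap\pi(\maltese_{\pi})\subseteq\maltese\cap f(\maltese)$, which has at most $4$ elements by Lemma~\ref{lem_rank1}, and Lemma~\ref{lem_tec} yields
\[
\sharp\Sigma=\sharp\big(\Xi\cap\pi(\Xi)\big)\leq 3\,\sharp\big(\Xi\setminus\pi(\Xi)\big)+\sharp\big(\maltese_{\pi}\cap\pi(\maltese_{\pi})\big)\leq 3\sharp\Lambda+4\leq 3n+4 .
\]

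Next I would show that the collection of $\Lambda$-exceptional sets is closed under finite unions. If $\Sigma_{1},\Sigma_{2}$ are $\Lambda$-exceptional, then $\Sigma_{1}\cup\Sigma_{2}$ is finite and forward invariant, and $f^{-1}(\Sigma_{1}\cup\Sigma_{2})\setminus(\Sigma_{1}\cup\Sigma_{2})\subseteq\big(f^{-1}(\Sigma_{1})\setminus\Sigma_{1}\big)\cup\big(f^{-1}(\Sigma_{2})\setminus\Sigma_{2}\big)\subseteq\Lambda$; it only remains to rule out that this set is empty. If it were, $\Sigma_{1}\cup\Sigma_{2}$ would be a finite completely invariant subset of $J(f)$, so $U:=J(f)\setminus(\Sigma_{1}\cup\Sigma_{2})$ would be a nonempty open forward invariant set (nonempty because $J(f)$, having no isolated point, is infinite) with $f^{m}(U)\subseteq U\subsetneq J(f)$ for every $m$, contradicting topological exactness of $f$ on $J(f)$. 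By induction every finite union of $\Lambda$-exceptional sets is again $\Lambda$-exceptional.

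Combining, write $\Sigma_{\max}=\bigcup_{\alpha}\Sigma_{\alpha}$ as in \eqref{equ_maximum exceptional set}, the union over all $\Lambda$-exceptional sets (if there are none, \eqref{equ_universal bound} is trivial). Every finite subunion $\bigcup_{\alpha\in F}\Sigma_{\alpha}$ is $\Lambda$-exceptional by the previous step, hence has at most $3n+4$ elements by the first step; since this holds for all finite $F$, the whole union $\Sigma_{\max}$ also has at most $3n+4$ elements, which is \eqref{equ_universal bound} (uniformly over $f\in\sA_{n}$). In particular $\Sigma_{\max}$ is finite, so it coincides with one such finite subunion and is therefore itself $\Lambda$-exceptional; being nonempty, finite and forward invariant, it contains a periodic orbit (the forward orbit of any of its points is eventually periodic). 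This establishes the additional assertions quoted after the statement.

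The only delicate point is the identity $\maltese_{\pi}=\Sigma\cap\maltese$ in the first step: one must check that restricting $f$ to $\Xi=f^{-1}(\Sigma)$ does not manufacture spurious rank~$1$ pre-images, since it is precisely this that lets Lemma~\ref{lem_rank1} absorb the additive constant $4$. This is where complete invariance (via $f(f^{-1}(\Sigma))=\Sigma$) and topological exactness (via Lemma~\ref{lem_rank1}) of maps in $\sA$ enter; everything else is bookkeeping with Lemma~\ref{lem_tec}, very much along the lines of Part~1 of the proof of Proposition~\ref{lem_postercritical}.
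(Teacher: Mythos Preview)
Your proof is correct and follows essentially the same approach as the paper: the paper bounds $\sharp S_{\Lambda}$ via Lemmas~\ref{lem_tec} and~\ref{lem_rank1} (in Part~1 of the proof of Proposition~\ref{lem_postercritical}) and then observes $\Sigma_{\max}\subseteq S_{\Lambda}$, whereas you apply the same two lemmas directly to $\Xi=f^{-1}(\Sigma)$ for each exceptional $\Sigma$, which is slightly cleaner since you get equalities $\Xi\cap\pi(\Xi)=\Sigma$ and $\Xi\setminus\pi(\Xi)=f^{-1}(\Sigma)\setminus\Sigma$ rather than inclusions. Your union argument is the same as the paper's, and you additionally supply the nonemptiness check (via topological exactness) that the paper's proof omits.
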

\begin{proof}
Given an interval map $f$ in $\sA_{n}$, on one hand, it is
straightforward to see that each $\mbox{Crit}'(f)$-exceptional set
$\Sigma$ is contained in $S_{\tiny{\mbox{Crit}'(f)}}$. On the other
hand, for every two $\mbox{Crit}'(f)$-exceptional sets
$\Sigma_{1},\Sigma_{2}$, we have
\begin{align*}
f^{-1}(\Sigma_{1}\cup \Sigma_{2})\backslash(\Sigma_{1}\cup
\Sigma_{2})&=[f^{-1}(\Sigma_{1})\cup
f^{-1}(\Sigma_{2})]\backslash(\Sigma_{1}\cup \Sigma_{2})\\
&=(f^{-1}(\Sigma_{1})\backslash(\Sigma_{1}\cup \Sigma_{2}))\cup(f^{-1}(\Sigma_{2})\backslash(\Sigma_{1}\cup \Sigma_{2}))\\
&\subseteq(f^{-1}(\Sigma_{1})\backslash
\Sigma_{1})\cup(f^{-1}(\Sigma_{2})\backslash
\Sigma_{2})\subseteq\mbox{Crit}'(f).
\end{align*}
This implies that $\Sigma_{\max}$ is a $\mbox{Crit}'(f)$-exceptional
set, and is contained in $S_{\tiny{\mbox{Crit}'(f)}}$.

\medskip

Using \eqref{equ_cardnality}, we have
$$
\sup_{f\in\sA_{n}}\sharp\Sigma_{\max}\leq \sup_{f\in\sA_{n}}\sharp
S_{\tiny{\mbox{Crit}'(f)}}\leq 3\sharp\mbox{Crit}'(f)+4=3n+4,
$$
as wanted.
\end{proof}

\medskip

\begin{rema}\label{rem_lowerbound}
Other than the upper bound estimation, we also have
\begin{equation}\label{equ_lowerbound}
3n-1\leq\sup_{f\in\sA_{n}}\sharp\Sigma_{\max}.
\end{equation}
The proof is actually based on the construction of a real polynomial
associated by an admissible kneading data. Since this result is not
used in the sequel, we omit the details on the proof.
\end{rema}

\begin{rema}\label{rem_parallel}
In the view of estimations \eqref{equ_universal bound} and
\eqref{equ_lowerbound}, Corollary \ref{coro_universal bound} is a
parallel property to that in the complex setting: every exceptional
set of a rational function has at most 4 elements.
\end{rema}

\section{Co-homology}\label{sec_exceptional}
This section is devoted to the co-homologous transformation of
$\log|Df|$ as stated in Proposition \ref{lem_cohomology} below. The
idea originates from \cite{MakSmi96,MakSmi00}, and the construction
is analogous to the constructions of the ramification function of
Thurston mappings \cite{DouHub93}, (but in an inverse direction). On
the other side, some special cases of Proposition
\ref{lem_cohomology} (e.g., Chebyshev Polynomials) are also
discussed in \cite{BK98}.

To state Proposition \ref{lem_cohomology}, we recall $\cU$ the set
of upper semi-continuous potentials in
\eqref{equ_uppersemicontinuous1}, and for each $u\in\cU$, recall
$\Lambda(u)$ the singular set defined in \eqref{equ_singluar set}.
Given a map $f\in\sA$, recall $\Sigma_{\max}$ the maximum
exceptional set in \eqref{equ_maximum exceptional set}. We highlight
that the proof of Proposition \ref{lem_cohomology} is more
complicated than that for complex ration maps. This is due to the
fact that the topological structure of $\Sigma_{\max}$ for interval
maps is highly more sophisticated than that for the complex rational
maps.


\begin{prop}\label{lem_cohomology}
Let $f:J(f)\to J(f)$ be an interval map in $\sA$, then there exists
a lower semi-continuous map $h:J(f)\to\R\cup\{+\infty\}$, such that
\begin{enumerate}
  \item[$(1)$] $h$ only has log poles in $\Sigma_{\max}$;
  \item[$(2)$] Let $G:=\log|Df|+h\circ f-h$, then $G\in\cU$.
\end{enumerate}
Moreover,
\begin{enumerate}
  \item \begin{equation}\label{equ_cohomologyintegral}
  \widetilde{P}(f,-tG)=\widetilde{P}(t),~~\forall t<0,
  \end{equation}
  \item The singular set $\Lambda(G)$ is a proper subset of $\mbox{Crit}'(f)$,
  and $G$ is non-exceptional. In other words,
  there is no forward invariant finite set $\Sigma\subset J(f)$
  satisfying
  \begin{equation}\label{equ_nonexceptional property}
  f^{-1}(\Sigma)\backslash\Sigma\subseteq\Lambda(G).
  \end{equation}
\end{enumerate}
\end{prop}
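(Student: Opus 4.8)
The plan is to build $h$ explicitly as an infinite sum that records, for each point $x$ of the Julia set and each critical point $c \in \mathrm{Crit}'(f)$ whose forward orbit will eventually pass through $x$ under iteration, a contribution $(\ell_c - 1)\log|x - \cdot|$ spread backwards along preimage branches. Concretely, I would follow the Makarov--Smirnov / Thurston ramification idea (\cite{MakSmi96,MakSmi00,DouHub93}) in the "inverse direction": set $h(x) := \sum_{n \ge 0} \sum_{y} \rho(y)$, where the inner sum runs over the preimages $y$ of $x$ under $f^n$ that stay $\mathrm{Crit}'(f)$-normal (in the sense of Section \ref{sec_normality}) along the way, and $\rho(y)$ is a weight of the form $c_y \log|x - \pi(y)|$ built from the local orders $\ell_c$. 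The key input making this sum well-defined and lower semi-continuous is precisely Proposition \ref{lem_postercritical}: for a non-exceptional configuration the abnormal set $S_{\mathrm{Crit}'(f)}$ is finite (Inequality \eqref{equ_cardnality}), so the only points where infinitely many terms can accumulate — the log poles of $h$ — lie inside $\Sigma_{\max}$, which is exactly conclusion $(1)$. Lower semi-continuity then comes because $h$ is a supremum of finite partial sums, each of which is continuous off a finite set and tends to $+\infty$ near the designated poles.

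Next I would verify $(2)$, that $G := \log|Df| + h\circ f - h$ lies in $\cU$. The point of the construction is a telescoping/cancellation: near a critical point $c$, $\log|Df|$ has a $(\ell_c - 1)\log|x-c|$ singularity (from non-flatness, $|Df| \asymp |x-c|^{\ell_c - 1}$), and the term $h\circ f - h$ is designed so that this singularity is absorbed whenever $c$ is $\mathrm{Crit}'(f)$-normal, while at abnormal critical points — which sit in $\Sigma_{\max}$ and are finite in number — the leftover singularity is still of the admissible form $b_G(c)\log|x-c|$ with $b_G(c) \ge 0$, plus a Hölder remainder. So $G$ has the shape required by \eqref{equ_uppersemicontinuous1}. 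The arithmetic check that $b_G(c) \ge 0$ everywhere and $= 0$ for at least one $c \in \Lambda(\log|Df|)$ is the bookkeeping heart of the argument.

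For statement $(a)$, $\widetilde{P}(f,-tG) = \widetilde{P}(t)$: since $G$ and $\log|Df|$ differ by a coboundary $h\circ f - h$, for every \emph{non-atomic} invariant measure $\nu$ one has $\int (h\circ f - h)\, d\nu = 0$, provided $h \in L^1(\nu)$. The subtlety is that $h$ has log poles, so I must show every non-atomic invariant measure integrates $h$ (and $G$) — this should follow from the fact that non-atomic measures give no mass to the finite poles and from a standard estimate that $\int \log|x-c|\,d\nu > -\infty$ for invariant $\nu$ with $\nu(\{c\})=0$ near a point (using that $\log|Df|$-type integrals are finite for non-atomic measures, which is where topological exactness and the structure of $\sA$ enter). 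Then $\cF_\nu(f,-tG) = \cF_\nu(f,-t\log|Df|)$ on $\widetilde{\cM}(f,J(f))$, and the hidden pressures agree. Statement $(b)$ is then almost a tautology of the construction: $\Lambda(G) \subsetneq \mathrm{Crit}'(f)$ because at least one normal critical point has had its singularity killed ($b_G(c)=0$), and $G$ is non-exceptional because, by Proposition \ref{lem_postercritical} applied with $\Lambda = \Lambda(G)$, any forward-invariant finite $\Sigma$ with $f^{-1}(\Sigma)\setminus\Sigma \subseteq \Lambda(G)$ would force every point of $\Sigma$ to be abnormal for $\Lambda(G)$, but the normality we engineered contradicts this — one shows such a $\Sigma$ would have to be contained in $\Sigma_{\max}$, yet the defining property of $h$ removed the singularities there, so $f^{-1}(\Sigma)\setminus\Sigma$ cannot actually land in $\Lambda(G)$.

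The main obstacle I anticipate is \textbf{convergence and the precise pole structure of the series defining $h$}: one must show the infinite sum converges (locally uniformly away from $\Sigma_{\max}$) with good enough control on the weights $c_y$ and the geometry of the preimage branches $\pi(y) \to x$ — this needs the non-flat structure of the critical points (hence the non-flatness hypothesis in $\sA$, which the remark flags as essential here) together with a Koebe-type distortion bound to compare $|x - \pi(y)|$ across branches, and it needs the finiteness of $S_{\mathrm{Crit}'(f)}$ from Proposition \ref{lem_postercritical} to guarantee the poles do not proliferate. Controlling the Hölder regularity of the non-singular part of $G$ under this infinite summation is the second delicate point and will likely require summing a geometric-type series coming from the hyperbolic repelling behaviour of periodic points and the exactness of $f$ on $J(f)$.
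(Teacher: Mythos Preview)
Your construction of $h$ as an infinite series over backward orbits is both unnecessary and misdirected, and the paper's approach is considerably simpler. Since $\Sigma_{\max}$ is already known to be a \emph{finite} set (Corollary~\ref{coro_universal bound}), the paper takes $h$ to be a \emph{finite} linear combination
\[
h(x)=\sum_{\xi\in\Sigma_{\max}}\alpha(\xi)\log|x-\xi|,
\]
with coefficients $\alpha(\xi)\in(-1,0]$ defined recursively from the local orders along the finite tree $f^{-1}(\Sigma_{\max})\to\Sigma_{\max}$ (equations \eqref{equ_cyclecoefficients}--\eqref{equ_coefficient}). No convergence issue, no Koebe distortion, no geometric series: lower semi-continuity and the log-pole structure are immediate, and the non-flatness hypothesis is used only in the finite arithmetic check that $\alpha(\xi)>-1$ and that the resulting coefficients $b(\xi)$ in $G$ are non-negative. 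The ``main obstacle'' you anticipate simply does not arise.

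More seriously, you have the cancellation running in the wrong direction. The coboundary $h\circ f-h$ is \emph{not} designed to kill the singularity of $\log|Df|$ at normal critical points; in the paper's $G$, every critical point $\xi\in\mathrm{Crit}'(f)\setminus f^{-1}(\Sigma_{\max})$ retains its full singularity $(\ell(\xi)-1)\log|x-\xi|$. What the construction does is adjust the coefficients at points of $f^{-1}(\Sigma_{\max})$ so that (i) the new coefficients $b(\xi)$ vanish on $\Sigma_{\max}$ itself, and (ii) for \emph{each} periodic cycle $\cO\subset\Sigma_{\max}$ there is at least one critical point in $\mathrm{Crit}^*(\Sigma_{\max}):=f^{-1}(\Sigma_{\max})\setminus\Sigma_{\max}$ landing on $\cO$ whose coefficient $b(\xi)$ becomes zero (this is forced by the \emph{maximum} in the definition of $\hat\alpha(\cO)$). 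It is this last point that makes $\Lambda(G)\subsetneq\mathrm{Crit}'(f)$ and, by maximality of $\Sigma_{\max}$, makes $G$ non-exceptional. Your sketch of (b) via Proposition~\ref{lem_postercritical} would not go through as written, because you have not arranged for any critical point in $f^{-1}(\Sigma_{\max})\setminus\Sigma_{\max}$ to leave $\Lambda(G)$; the paper's argument for (b) instead uses directly that each such critical point is $\mathrm{Crit}'(f)$-normal (by maximality of $\Sigma_{\max}$), hence $\Lambda(G)$-normal once one of them has been removed from $\Lambda(G)$.
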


\medskip

\begin{proof}[Proof of Proposition \ref{lem_cohomology}]
We split the proof into 3 parts. In part 1, we give the concrete
formalism of $h$. In part 2, we show that the new potential
$G\in\cU$. Finally, we prove statements $(a)$ and $(b)$ in Part 3.

{\bf 1.} If $\Sigma_{\max}=\emptyset,$ then $G:=\log|Df|$ has
already satisfies the desired assertions $(a)$ and $(b)$. In this
setting, $h=0$ and there is nothing to prove. So, without loss of
generality, we assume that $\Sigma_{\max}\neq\emptyset$. By
Corollary \ref{coro_universal bound}, it follows that
$\Sigma_{\max}$ is also a $\mbox{Crit}'(f)$-exceptional set, and
$\sharp\Sigma_{\max}\leq 3\sharp\mbox{Crit}'(f)+4$. Denote by
$\mbox{Crit}^*(\Sigma_{\max}):=f^{-1}(\Sigma_{\max})\backslash\Sigma_{\max}$,
and thus $\mbox{Crit}^*(\Sigma_{\max})$ is a subset contained in
$\mbox{Crit}'(f)$.


To define the function $h$, we need the following definitions. For
each $x\in J(f)$, put
\begin{equation}\label{equ_local degree}
\ell(x):=\left\{\begin{array}{cc}
            \ell_{x} & ~\mbox{if}~x\in\mbox{Crit}'(f);\\
            1 & \mbox{otherwise}.
          \end{array}\right.
\end{equation}
On the other hand, for each critical point
$\xi\in\mbox{Crit}^{*}(\Sigma_{\max}),$ put $n(\xi)$ be the minimal
non-negative integer $n$ such that $f^{n}(\xi)$ is periodic. For
each periodic cycle $\cO\subset\Sigma_{\max}$, put
\begin{equation}\label{equ_cyclecoefficients}
\hat{\alpha}(\cO):=\max\left\{\prod_{j=0}^{n(\xi)-1}(\ell(f^{j}(\xi)))^{-1}:\xi\in\mbox{Crit}^{*}(\Sigma_{\max})~\mbox{and}~f^{n(\xi)}(\xi)\in
\cO\right\}.
\end{equation}
Based on \eqref{equ_local degree} and \eqref{equ_cyclecoefficients},
we define a map $\alpha:J(f)\rightarrow \R$ inductively by
\begin{equation}\label{equ_coefficient}
\alpha(\xi):=\left\{\begin{array}{ll}
          \hat{\alpha}(\cO)-1,&\mbox{if}~~\xi\in\cO\cap\Sigma_{\max};\\
          (\alpha(f(\xi))+1)\ell(\xi)-1,& \mbox{if}~~\xi~\mbox{nonperiodic
          in}~\Sigma_{\max};\\
          0 &\mbox{Otherwise}.
         \end{array}
          \right.
\end{equation}
With this convention, put
\begin{equation}\label{equ_hx}
h(x):=\sum_{\xi\in\Sigma_{\max}}\alpha(\xi)\log|x-\xi|,~~\forall
x\in J(f).
\end{equation}
By the construction, $h$ only has log poles in $\Sigma_{\max}$. In
the rest of Part 1, we show the lower semi-continuity of $h$. This
is sufficient to show that
\begin{equation}\label{equ_degreecohomology}
-1<\alpha(\xi)\leq0,~\forall\xi\in\Sigma_{\max}.
\end{equation}
There are two cases.
\begin{enumerate}
  \item[{\bf Case1:}]~$\xi$ is not periodic. Then there exist a
  $z\in\mbox{Crit}^*(\Sigma_{\max})$ and an integer $1\leq i\leq
  n(z)-1$ such that $\xi=f^{i}(z)$. Therefore,
\begin{alignat*}{2}
0&<\alpha(\xi)+1=\left[\prod_{j=0}^{n(z)-2-i}\ell(f^{j}(\xi))\right]\cdot\hat{\alpha}(\cO)\ell(f^{n(z)-1-i}(\xi))\\
&=\prod_{j=0}^{n(z)-1-i}\ell(f^{j}(\xi))\cdot\hat{\alpha}(\cO)\\
&\leq
\prod_{j=0}^{i-1}(\ell(f^{j}(z)))^{-1}~~~&\text{(Using \eqref{equ_coefficient})}\\
&\leq 1.~~~~~&\text{(Using non-flatness hypothesis)}
\end{alignat*}
This directly implies $-1<\alpha(\xi)\leq0$.
  \item[{\bf Case2:}]~$\xi$ is periodic. Then $-1<\alpha(\xi)=\hat{\alpha}(\cO)-1\leq 0.$
\end{enumerate}
In both cases, we obtain the desired inequality
\eqref{equ_degreecohomology}, and thus $h$ is lower semi-continuous.

\medskip

{\bf 2.} Put
\begin{equation}\label{equ_cohomologouspotential}
G:=\log|Df|+h\circ f-h.
\end{equation}
We show that $G\in\cU$ in this part. In fact, the non-flatness
hypothesis yields that for each $\xi\in f^{-1}(\Sigma_{\max})$,
there is a H\"{o}lder continuous map $t_{\xi}(x)$ with
$$
|f(x)-f(\xi)|=t_{\xi}(x)|x-\xi|^{\ell(\xi)},~\mbox{and}~\inf_{\xi\in
f^{-1}(\Sigma_{\max})}\inf_{x\in~J(f)}t_{\xi}(x)>0.
$$
Hence, there is also a H\"{o}lder continuous function
$t(x)>0,~~\forall x\in ~J(f)$ with
$$
|Df(x)|=\prod_{c\in\tiny{\mbox{Crit}'}(f)}|x-c|^{\ell(c)-1}t(x).
$$

Therefore,
\begin{align*}
h\circ f(x)&=\sum_{\xi\in\Sigma_{\max}}\alpha(\xi)\log|f(x)-\xi|\\
&=\sum_{\xi\in f^{-1}(\Sigma_{\max})}\alpha(f(\xi))\log|f(x)-f(\xi)|\\
&=\sum_{\xi\in
f^{-1}(\Sigma_{\max})}\alpha(f(\xi))\log\big(|x-\xi|^{\ell(\xi)}\cdot
t_{\xi}(x)\big)\\
&=\sum_{\xi\in
f^{-1}(\Sigma_{\max})}\alpha(f(\xi))\big[\ell(\xi)\cdot\log|x-\xi|+\log
t_{\xi}(x)\big],
\end{align*}
and thus
\begin{align*}
G(x)&=\left(\sum_{\xi\in\tiny{\mbox{Crit}}'(f)}(\ell(\xi)-1)\log|x-\xi|+\log t(x)\right)\\
&+\left(\sum_{\xi\in
f^{-1}(\Sigma_{\max})}\alpha(f(\xi))\left[\ell(\xi)\cdot\log|x-\xi|+\log
t_{\xi}(x)\right]\right)-\left(\sum_{\xi\in\Sigma_{\max}}\log\alpha(\xi)|x-\xi|\right)\\
&=\left(\log~t(x)+\sum_{\xi\in f^{-1}(\Sigma_{\max})}\alpha(f(\xi))\log t_{\xi}(x)\right)\\
&+\left(\sum_{\xi\in
f^{-1}(\Sigma_{\max})}[(\ell(\xi)-1)+\alpha(f(\xi))\ell(\xi)-\alpha(\xi)]\log|x-\xi|\right)\\
&+\left(\sum_{\xi\in\tiny{\mbox{Crit}}'(f)\backslash
f^{-1}(\Sigma_{\max})}(\ell(\xi)-1)\log|x-\xi|\right)\\
&\equiv g(x)+\sum_{\xi\in
f^{-1}(\Sigma_{\max})}b(\xi)\log|x-\xi|+\sum_{\xi\in\tiny{\mbox{Crit}'(f)}\backslash
f^{-1}(\Sigma_{\max})}(\ell(\xi)-1)\log|x-\xi|,
\end{align*}
with $g(x):=\log~t(x)+\sum_{\xi\in
f^{-1}(\Sigma_{\max})}\alpha(f(\xi))\log t_{\xi}(x)$ and
$b(\xi):=(\ell(\xi)-1)+\alpha(f(\xi))\ell(\xi)-\alpha(\xi),~~\forall
\xi\in f^{-1}(\Sigma_{\max})$.

\medskip

In the view of the formalism above, the H\"{o}lder continuality of
$g$ is obvious, and the non-flat hypothesis ensures
$\ell(\xi)-1\geq0,~~\forall\xi\in\mbox{Crit}'(f)\backslash
f^{-1}(\Sigma_{\max})$. Therefore, in order to show $G\in\cU$, it is
sufficient to show that
\begin{equation}\label{equ_index2}
b(\xi)\geq0,~~\forall \xi\in
\mbox{Crit}^{*}(\Sigma_{\max}),~~\mbox{and}~~b(\xi)=0,~\forall
\xi\in\Sigma_{\max}.
\end{equation}
There are three cases.

\begin{enumerate}
  \item[{\bf Case 1:}]~$\xi\in\mbox{Crit}^{*}(\Sigma_{\max}).$ Then for each $0\leq
i\leq n(\xi)-2$, let $z:=f^{n(\xi)-1}(\xi)\in f^{-1}(\cO)$, it then
follows from \eqref{equ_cyclecoefficients} and
\eqref{equ_coefficient} that
\begin{align*}
\alpha(f^{n(\xi)-(i+1)}(\xi))&=\left[\hat{\alpha}(\cO)\cdot\prod_{j=0}^{i}\ell(f^{-i+j}(z))\right]-1\\
&\geq\left[\prod_{j=0}^{n(\xi)-1}(\ell(f^{j}(\xi)))^{-1}\cdot\prod_{j=0}^{i}\ell(f^{-i+j}(z))\right]-1\\
&\geq\left[\prod_{j=0}^{n(\xi)-1-(i+1)}(\ell(f^{j}(\xi)))^{-1}\right]-1\\
&=\left[\prod_{j=0}^{n(\xi)-i-2}(\ell(f^{j}(\xi)))^{-1}\right]-1.
\end{align*}
Hence when $i=n(\xi)-2$, we have $\alpha(f(\xi))\geq
\ell(\xi)^{-1}-1$. This means $
b(\xi)\equiv\ell(\xi)(\alpha(f(\xi))+1)-(\alpha(\xi)+1) \geq
\ell(\xi)(\ell(\xi)^{-1}-1+1)-(0+1)=0. $

\medskip

\item[{\bf Case 2:}]~$\xi\in\Sigma_{\max}$, and $\xi$ is not periodic. Then
\eqref{equ_coefficient} directly implies that
$$
b(\xi)=\ell(\xi)(\alpha(f(\xi))+1)-(\alpha(\xi)+1)=0
$$

\medskip

\item[{\bf Case 3:}]~$\xi$ is periodic. Then $\ell(\xi)=1$, and
\eqref{equ_coefficient} implies that
$$
b(\xi)=\ell(\xi)(\alpha(f(\xi))+1)-(\alpha(\xi)+1)=\hat{\alpha}(\cO)-\hat{\alpha}(\cO)=0.
$$
\end{enumerate}
As a conclusion, we obtain \eqref{equ_index2} for all cases above.
Thus, the new potential $G\in\cU$.

\medskip

{\bf 3.} In this part, we verify the statements $(a)$ and $(b)$.
From Part 1, it is clear that $h$ is finite outside $\Sigma_{\max}$.
Thus, for each $t<0$ and $\mu\in\widetilde{\cM}(f,J(f))$, we have
$$
\int_{J(f)}-tGd\mu=\int_{J(f)}-t\log|Df|d\mu,
$$
This directly yields $\widetilde{P}(f,-tG)=\widetilde{P}(t)$, which
completes the proof of Statement $(a)$.

Next, we verify Statement $(b)$. On one hand, based on the
estimations above, we have $b(\xi)=0$ for every periodic point
$\xi\in\Sigma_{\max}$. On the other side, the maximality in the
definition of $\hat{\alpha}(\cO)$ yields that for each periodic
cycle $\cO\subset\Sigma_{\max}$, there is at least a critical point
$\xi\in\mbox{Crit}^{*}(\Sigma_{\max})$ with $f^{n(\xi)}(\xi)\in\cO$
and
$$
\alpha(f(\xi))=\ell(\xi)^{-1}-1,
$$
this implies
\begin{equation}\label{equ_modified coefficient}
b(f^{j}(\xi))=0,~~\forall j\in\{0,1,\cdots,n(\xi)-1\}.
\end{equation}
Thus, $\Lambda(G)$ is a proper subset of $\mbox{Crit}'(f)$. By the
maximality of $\Sigma_{\max}$, it is easy to see that each critical
point $\xi\in\mbox{Crit}^*(\Sigma_{\max})$ is
$\mbox{Crit}'(f)$-normal, so $\xi$ is $\Lambda(G)$-normal.
Therefore, for every finite subset $\Sigma'\subseteq J(f)$
satisfying $f(\Sigma')\subseteq \Sigma'$, the potential $G$ is
finite at some points in ~$f^{-1}(\Sigma')\backslash\Sigma'$. Thus
we obtain the desired \eqref{equ_nonexceptional property} and hence
Statement $(b)$. The proof of Proposition \ref{lem_cohomology} is
thus completed.
\end{proof}

\bigskip

\section{Iterated multi-valued function systems}\label{sec_IMFS}
In this section, we construct an ``Iterated multi-valued function
system''. This is the main ingredient in the proof of Statement
$(b)$ of Key Lemma, and is stated as Proposition \ref{prop_2new}
below.
\subsection{Iterated Multi-valued Function Systems} The machinery of
``Iterated Multi-valued Function Systems'' approach is introduced in
\cite{LiRiv13a}, motivated by dealing with the situation where the
invariant measure has zero Lyapunov exponent. It is a generalization
of \cite[Main theorem]{InoRiv12}, and is based on a more general
type of induced systems.

We follow up notations from \cite[\S3]{LiRiv13a}. Given an interval
map $f\in\sA$, a compact and connected subset $B_{0}$\footnote{We
mean there is a compact and connected subset $B'_{0}$ in $I$, such
that $B_{0}=B'_{0}\cap J(f)$.} of $J(f)$, a sequence of multi-valued
functions \footnote{A \emph{multi-valued function} $\phi:B\to W$ is
a function which maps each point $x$ of $B$ to a non-empty subset
$\phi(x)$ of $W$.} $(\phi_{l})_{l=1}^{+\infty}$ is an \emph{Iterated
Multi-valued Function System (IMFS) generated by $f$}, if for each
integer $l\geq1$, there exist an integer $m_{l}\geq0$ and a pull
back\footnote{For a subset $V\subset J(f)$ and an integer $m\geq1$,
each connected component of $f^{-m}(V)$ is a \emph{pull-back} of $V$
by $f^{m}$.} $W_{m_l}$ of $B_{0}$ by $f^{m_{l}}$ contained in
$B_{0}$, such that
\begin{itemize}
  \item $f^{m_{l}}$ has an \emph{onto property} from $W_{m_l}$ to
  $B_{0}$, i.e., $f^{m_{l}}(W_{m_l})=B_{0}$;
  \item $\phi_{l}=(f^{m_{l}}|_{W_{m_l}})^{-1}.$
\end{itemize}
With this convention, we say $(\phi_{l})_{l=1}^{\infty}$ is defined
on $B_{0}$, with $(m_{l})_{l=1}^{+\infty}$ as its \emph{time
sequence}.

Let $(\phi_{l})_{l=1}^{+\infty}$ be an IMFS generated by $f$ defined
on $B_{0}$ with time sequence $(m_{l})_{l=1}^{+\infty}$. For each
integer $n\geq 1$, put $\Omega_{n}:=\{1,2\cdots\}^n$, and denote the
space of all finite words in the alphabet $\{1,2\cdots\}$ by
$\Omega^*:=\bigcup_{n\geq1}\Omega_{n}$. For every integer $k\geq1$
and every $\underline{l}:=l_{1}l_{2}\cdots l_{k}\in\Omega^*$, put
$$
|\underline{l}|=k,~~m_{\underline{l}}=m_{l_1}+m_{l_2}+\cdots+m_{l_{k}},~
\mbox{and}~~\phi_{\underline{l}}=\phi_{l_1}\circ\phi_{l_2}\circ\cdots\phi_{l_{k}}.
$$
Note that for every $x_{0}\in B_{0}$, and every pair of distinct
words $\underline{l}$ and $\underline{l}'$ in $\Omega^*$ satisfying
$m_{\underline{l}}=m_{\underline{l}'}$, we have:
\begin{equation}\label{equ_IMFS}
    \mbox{If the set $\phi_{\underline{l}}(x_{0})$ and $\phi_{\underline{l}'}(x_{0})$ intersect, then
    $\phi_{\underline{l}}(x_{0})=\phi_{\underline{l}'}(x_{0})$.}
\end{equation}
The IMFS is said to be \emph{free}, if there is an $x_{0}\in B_{0}$
such that for every pair of distinct words $\underline{l}$ and
$\underline{l}'$ in $\Omega^*$ with
$m_{\underline{l}}=m_{\underline{l}'}$, the set
$\phi_{\underline{l}}(x_{0})$ and $\phi_{\underline{l}'}(x_{0})$ are
disjoint.

\medskip

\begin{prop}\label{prop_2new}
Let $f:J(f)\to J(f)$ be an interval map in $\sA$, and $G$ be the
resulting upper semi-continuous potential in $\cU$ given by
Proposition \ref{lem_cohomology}. Let $\mu$ be an ergodic measure in
$\widetilde{\cM}(f,J(f))$ with positive Lyapunov exponent. Then
there exists a subset $X$ of $J(f)$ of full measure with respect to
$\mu$, such that for every point $x_{0} \in X$ and $t<0$, the
following property holds: There exist a constant $C>0$, a compact
and connect subset $B_{0}$ of $J(f)$ containing $x_{0}$, and a free
IMFS $(\phi_{l})_{l=1}^{+\infty}$ generated by $f$ with time
sequence $(m_{l})_{l=1}^{+\infty}$, such that
$(\phi_{l})_{l=1}^{+\infty}$ is defined on $B_{0}$, and such that
for every integer $l\geq1$ and every $z\in\phi_{l}(B_{0})$, we have
\begin{equation}\label{equ_sum}
   S_{m_{l}}(-tG)(z)\geq
   m_{l}\int_{J(f)}-tGd\mu-C.
\end{equation}
\end{prop}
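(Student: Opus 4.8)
The plan is to build the free IMFS by combining two ingredients: a "return word" construction producing pull-backs of a fixed nested pair of intervals around a Lebesgue-density/Birkhoff-generic point, and a bounded-distortion control of Birkhoff sums of $G$ along these pull-backs. First I would fix $x_0$ in a full-measure set $X$ on which several things hold simultaneously: $x_0$ is Birkhoff-generic for $\mu$ (so $\frac{1}{n}S_n(-tG)(x_0)\to\int -tG\,d\mu$ whenever $x_0$ avoids the log-poles of $G$, which by Proposition~\ref{lem_cohomology} lie in $\Sigma_{\max}$ and have zero $\mu$-measure by $(\ast)$/positivity of Lyapunov exponent), the forward orbit of $x_0$ returns infinitely often to a small fixed neighborhood, and — using the positive Lyapunov exponent together with the Pesin/Przytycki-type estimates available for maps in $\sA$ — the relevant inverse branches along return times contract exponentially and with uniformly bounded distortion. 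This is where I would lean on the machinery of \cite{LiRiv13a}: the point $x_0$ should be chosen to have "good returns" in the sense that there are arbitrarily long pieces of backward orbit through $x_0$ along which $f^{m}$ restricted to a suitable pull-back $W_m$ of a fixed Koebe neighborhood $\widehat{B_0}\supset B_0$ is a diffeomorphism onto $\widehat{B_0}$ with distortion bounded by a constant depending only on $f$ and the Koebe space.

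Next I would set up the IMFS itself. Take $B_0$ to be the nested interval (intersected with $J(f)$) about $x_0$ produced by the return construction, and let $(m_l)$ enumerate the return times for which $f^{m_l}$ has the onto property from a pull-back $W_{m_l}\subset B_0$ to $B_0$; set $\phi_l=(f^{m_l}|_{W_{m_l}})^{-1}$. Topological exactness of $f$ on $J(f)$ (part of the definition of $\sA$) guarantees that for each sufficiently small ball there is some iterate mapping it onto all of $J(f)$, hence onto $B_0$, so the supply of such return times is infinite; freeness — that $\phi_{\underline l}(x_0)$ and $\phi_{\underline l'}(x_0)$ are disjoint for distinct words with equal total time — I would arrange by thinning the sequence $(m_l)$ so that the time sequence grows fast enough (e.g.\ lacunary), exactly as in \cite[\S3]{LiRiv13a}, using \eqref{equ_IMFS} to upgrade "intersecting" to "equal" and then a counting/injectivity argument to exclude the equal case.

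The analytic heart — inequality \eqref{equ_sum} — is the estimate
\[
S_{m_l}(-tG)(z)\ge m_l\int_{J(f)}-tG\,d\mu-C\qquad\text{for all }z\in\phi_l(B_0).
\]
Here I would first pass from $z$ to $x_0$ along the branch: since $f^{m_l}$ maps a Koebe neighborhood of $\phi_l(B_0)$ diffeomorphically with bounded distortion, and since $G=\log|Df|+h\circ f-h$ where $h$ has only log poles in $\Sigma_{\max}$ away from $x_0$, the difference $|S_{m_l}(-tG)(z)-S_{m_l}(-tG)(x_0)|$ telescopes: the $\log|Df|$ part is controlled by bounded distortion of $f^{m_l}$, and the coboundary part $h\circ f^{m_l}-h$ at $z$ versus at $x_0$ is bounded once we know $f^{m_l}(z)=x_0$ and $z$ stays away from the poles (this requires choosing $B_0$ small enough that $\phi_l(B_0)$ avoids a fixed neighborhood of $\Sigma_{\max}$, which is possible because $x_0\notin\Sigma_{\max}$ and the branches are uniformly contracting). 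Then $S_{m_l}(-tG)(x_0)\ge m_l\int -tG\,d\mu - o(m_l)$ by Birkhoff genericity, and since for $t<0$ we have $-tG$ bounded below only up to the (integrable, finite a.e.) poles, a further thinning of $(m_l)$ to those return times where the Birkhoff average is within a fixed tolerance of the mean absorbs the $o(m_l)$ into an additive constant. I expect the main obstacle to be precisely this last point: handling the potential $G$, which is merely upper semi-continuous with $-\infty$ values at $\Lambda(G)\subset\mathrm{Crit}'(f)$, so that $-tG$ is unbounded above for $t<0$ — one must ensure the inverse branches $\phi_l$ keep $\phi_l(B_0)$ at a definite distance from $\Lambda(G)$ (using normality of the critical points in $\mathrm{Crit}^*(\Sigma_{\max})$ established in Proposition~\ref{lem_cohomology}) so that the distortion-type bounds and the comparison with $x_0$ remain uniform; the bounded-distortion estimates for $\log|Df|$ near non-flat critical points, in the style of \cite{LiRiv13a}, are the technical crux.
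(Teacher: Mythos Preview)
There is a genuine gap at the analytic heart of your argument. You propose to bound $|S_{m_l}(-tG)(z)-S_{m_l}(-tG)(x_0)|$ via bounded distortion of $f^{m_l}$ on a Koebe neighbourhood of $\phi_l(B_0)$, and then to invoke \emph{forward} Birkhoff genericity of $x_0$. But for that comparison to make sense, both $z$ and $x_0$ must lie in the same univalent pull-back $\phi_l(B_0)$; in an IMFS the sets $\phi_l(B_0)$ are small pieces strictly inside $B_0$, and there is no mechanism in your construction forcing $x_0\in\phi_l(B_0)$. A point $z\in\phi_l(B_0)$ satisfies $f^{m_l}(z)\in B_0$, so the orbit segment $z,f(z),\dots,f^{m_l-1}(z)$ \emph{ends} near $x_0$; it is unrelated to the forward orbit $x_0,f(x_0),\dots,f^{m_l-1}(x_0)$, which \emph{starts} at $x_0$. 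Forward Birkhoff genericity of $x_0$ therefore says nothing about $S_{m_l}(-tG)(z)$. Your sketch in fact mixes two incompatible pictures: ``arbitrarily long pieces of backward orbit through $x_0$'' on the one hand, and ``$S_{m_l}(-tG)(x_0)\ge m_l\int(-tG)\,d\mu-o(m_l)$ by Birkhoff genericity'' on the other.

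The paper resolves this by working entirely on the backward side. One passes to the natural extension $(\mathcal{Z},F,\nu)$ and applies the pointwise ergodic theorem to $F^{-1}$ (Lemma~\ref{lem_pointwiseergodic}); this produces, for $\mu$-a.e.\ $x_0$, a specific sequence of preimages $x_{n_l}\in f^{-n_l}(x_0)$ with $S_{n_l}(-tG)(x_{n_l})\ge n_l\int(-tG)\,d\mu-D$. Dobbs' unstable-manifold theorem (Lemma~\ref{lem_dobbs}, with Remark~\ref{rem_pesintheory}) then gives univalent pull-backs $W_{n_l}\ni x_{n_l}$ with bounded distortion, so the estimate extends to all of $W_{n_l}$ (Lemma~\ref{lem_huaibin2}). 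The IMFS is not built from these $W_{n_l}$ directly, since they need not lie in $B_0$: one takes an accumulation point $b$ of the $x_{n_l}$, pushes forward to a $\Lambda(G)$-normal iterate $b_N=f^N(b)$ via Proposition~\ref{lem_postercritical} (this is where non-exceptionality of $G$ enters), and constructs a bounded-length ``transit'' from $B_0$ into the $W_{n_l-N}$ whose entire orbit avoids $\Lambda(G)$; it is this transit, not the size of $B_0$, that keeps $G$ bounded along each branch and yields the constant $C$ in \eqref{equ_sum}. Finally, freeness is not obtained by lacunarity but by a marker argument using two disjoint intervals $U_0,U_1\subset B_0$: branches are routed through $U_0$, and a subsequence is chosen so that a distinguished orbit point always misses $U_0$, which forces $\phi_{\underline l}(x_0)\ne\phi_{\underline l'}(x_0)$ for distinct words of equal total time.
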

\begin{rema}\label{rem_obstacles}
Analogous to that in the proof of \cite[Main Theorem]{InoRiv12}, the
main step in the proof of Proposition \ref{prop_2new} is the
construction of an IMFS, based on a given invariant measure with
strictly positive Lyapunov exponent. However, since the potential
$G$ take value $-\infty$ at the singular set $\Lambda(G)$, we need
to modify the construction of the IMFS to bypass $\Lambda(G)$ (so
that we can obtain the constant $C$ in \eqref{equ_sum}). In fact,
the modification is closed related to Proposition
\ref{lem_postercritical} developed in \S\ref{sec_normality}. On the
other hand, the IMFS is also a powerful tool on overcoming the
difficulties (e.g., discovering an appropriate pull back and time
sequence, on which the IMFS admits the onto property) arising from
the fact that interval maps are not open maps in general.
\end{rema}

\begin{rema}
Although Proposition \ref{prop_2new} is written for interval maps in
$\sA$, it also works well for the complex rational maps of degree at
least 2 on the Riemann sphere, acting on its Julia set. In fact, the
proof will be even simpler since rational maps are open.
\end{rema}

\medskip

The proof of Proposition \ref{prop_2new} depends on several lemmas
and will be given in the end of this section.



\subsection{Pesin's theory}
Let us begin with some preliminaries related to Pesin's theory.
Recall first the definition of the natural extension of $f$ on
$J(f)$. Let $\Z_{-}$ denote the set of all non-positive integers and
endow
$$
\mathcal{Z}:=\{(z_{n})_{n\in\Z_{-}}\in(J(f))^{\Z_{-}}:~~\mbox{for
every}~n\in\Z_{-},f(z_{n-1})=z_{n}\}
$$
with the product topology. Define $F:\mathcal{Z}\to\mathcal{Z}$ by
$$
F((\cdots,z_{-2},z_{-1},z_{0}))=(\cdots,z_{-2},z_{-1},z_{0},f(z_{0})),
$$
and $\pi:\mathcal{Z}\to ~J(f)$ by $\pi((z_{n}))_{n\in\Z_{-}}=z_{0}$.
If $\mu$ is a Borel probability measure that is invariant and
ergodic for $f$, then there exists a unique Borel probability
measure $\nu$ on $\mathcal{Z}$ that is invariant and ergodic for
$F$, and satisfies $\pi_{*}\nu=\mu.$ We say $(\mathcal{Z},F,\nu)$ is
the \emph{natural extension} of $(J(f),f,\mu)$.

\medskip

Pointwise ergodic theorem yields the following lemma.
\begin{lemm}\cite[Lem1.3]{PRLS04}\label{lem_pointwiseergodic}
Let $(\cZ,\nu)$ be a probability space, and let $F:\cZ\to\cZ$ be an
ergodic measure preserving transformation. Then for each function
$\phi:\cZ\to\R$ that is integrable with respect to $\nu$, there
exists a subset $Z$ of $\cZ$ such that $\nu(Z)=1$, and such that for
every $z\in Z$, we have
$$
\limsup_{n\to\infty}\sum_{i=0}^{n-1}\left(\phi(F^{i}(z))-\int_{\cZ}\phi
d\nu\right)\geq0.
$$
\end{lemm}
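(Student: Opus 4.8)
The plan is to derive this from Birkhoff's pointwise ergodic theorem together with a recurrence argument of Atkinson type. First I would pass to the mean-zero case: put $\psi := \phi - \int_\cZ \phi \, d\nu \in L^1(\nu)$ and $S_n := \sum_{i=0}^{n-1}\psi\circ F^i$, so that the assertion becomes that $\nu$-almost every $z$ satisfies $\limsup_{n\to\infty} S_n(z) \geq 0$. Applying Birkhoff's theorem to $\psi$ and using the ergodicity of $F$ produces a full-measure set $Z_0$ on which $S_n(z)/n \to \int_\cZ\psi\,d\nu = 0$.

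Next I would argue by contradiction. Writing $E_{\delta,k} := \{z : S_n(z) < -\delta \text{ for all } n \geq k\}$, one has $\{z : \limsup_n S_n(z) < 0\} = \bigcup_{j,k \geq 1} E_{1/j,k}$, so if this set were not null, some $E := E_{\delta,k}$ would satisfy $\nu(E) = c > 0$. A second application of Birkhoff's theorem, now to $\mathbf{1}_E$, gives a full-measure set of points $z$ for which the return set $R(z):=\{n\geq 0: F^n z\in E\}$ has density $c$; in particular this holds for $\nu$-a.e.\ $z\in E\cap Z_0$. For such a $z$, I would extract greedily from $R(z)$ an increasing sequence $n_1<n_2<\cdots$ with $n_1\geq k$ and consecutive gaps $\geq k$; since each greedy choice excludes at most $k$ members of $R(z)$ from later consideration, the positive density of $R(z)$ forces $n_j \leq Cj$ for all large $j$, with $C=C(c,k)$.

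The point of this construction is the telescoping estimate coming from the cocycle identity $S_{n_{j+1}}(z)=S_{n_j}(z)+S_{n_{j+1}-n_j}(F^{n_j}z)$: since $F^{n_j}z\in E$ and $n_{j+1}-n_j\geq k$, each increment satisfies $S_{n_{j+1}-n_j}(F^{n_j}z)<-\delta$, and together with $S_{n_1}(z)<-\delta$ (valid because $z\in E$ and $n_1\geq k$) this gives $S_{n_j}(z)<-j\delta$. Combining with $n_j\leq Cj$ yields $S_{n_j}(z)/n_j<-\delta/C<0$ for all large $j$, hence $\liminf_n S_n(z)/n<0$, contradicting $z\in Z_0$. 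Therefore each $E_{1/j,k}$ is null, the bad set is null, and $Z := \{z : \limsup_n S_n(z)\geq 0\}$ is the desired full-measure set.

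I expect the only real obstacle to be the step that lies genuinely beyond Birkhoff's theorem, namely ruling out that $S_n(z)\to-\infty$ along almost every orbit --- a scenario fully compatible with $S_n(z)/n\to 0$. That is precisely what the recurrence argument above handles, and it is in essence Atkinson's recurrence theorem for mean-zero real-valued cocycles over an ergodic system; alternatively one could simply invoke Atkinson directly. The remaining points are routine: the elementary density bound $n_j=O(j)$, and the bookkeeping ensuring that the chosen return times are at least $k$ apart, so that the defining inequality of $E_{\delta,k}$ applies to every increment.
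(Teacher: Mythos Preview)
Your argument is correct. The reduction to the mean-zero cocycle, the decomposition of the bad set into the sets $E_{\delta,k}$, the greedy extraction of well-spaced return times from a positive-density set, and the telescoping via the cocycle identity all work as you describe; the resulting bound $S_{n_j}(z)/n_j<-\delta/C$ for large $j$ indeed contradicts Birkhoff. As you note, this is essentially Atkinson's recurrence theorem for integrable mean-zero cocycles over an ergodic system, and invoking Atkinson directly would be an equally valid (and shorter) route.

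As for comparison with the paper: there is nothing to compare. The paper does not prove this lemma at all; it simply quotes it from \cite[Lem.~1.3]{PRLS04} with the one-line remark that it follows from the pointwise ergodic theorem. Your write-up therefore supplies strictly more than the paper does, namely a self-contained proof rather than a citation.
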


We also need a version of Ledrappier's unstable manifold theorem
from Dobbs.
\begin{lemm}\label{lem_dobbs}\cite{Dob13a}
Let $f$ be an interval map in $\sA$. Suppose measure
$\mu\in\cM(f,J(f))$ is ergodic and has $\chi_{\mu}>0$. Denote by
$(\cZ, F, \nu)$ the natural extension of $(J(f),f,\mu)$. Then there
exists a measurable function $\alpha$ on $\cZ$ such that
$0<\alpha<1/2$ almost everywhere with respect to $\nu$, and such
that for $\nu-$almost every point $y\in\cZ$ there exists a set
$V_{y}$ contained in $\cZ$ with the following properties:
\begin{enumerate}
  \item $y\in V_{y}$ and $\pi(V_{y})=B(\pi(y),\alpha(y))$;
  \item For each integer $n\geq0$, $f^{n}:\pi(F^{-n}(V_{y}))\to
  V_{y}$ is diffeomorphic;
  \item For each $y'\in V_{y}$,
  $$\sum_{i=0}^{+\infty}|\log|Df(\pi(F^{-i}(y)))|-\log|Df(\pi(F^{-i}(y')))||<\log2.$$
  \item For each $\eta>0$ there is a measurable function $\theta$ on
  $\cZ$ with $0<\theta<+\infty$ almost everywhere with respect to
  $\nu$, such that
  $$
  \frac{1}{\theta(y)}\exp(n(\chi_{\nu}-\eta))\leq|(Df^{n})(\pi(F^{-n}(y)))|\leq\theta(y)\exp(n(\chi_{\nu}+\eta)).
  $$
  In particular,
  $$
  |\pi(F^{-n}(V_{y}))|\leq2\theta(y)\exp(-n(\chi_{\nu}-\eta)).
  $$
\end{enumerate}
\end{lemm}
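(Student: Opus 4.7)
The plan is to recognize Lemma~\ref{lem_dobbs} as a one‑dimensional Pesin/Ledrappier unstable–manifold theorem specialized to the class $\sA$, and to sketch how such a statement is proved rather than merely cite \cite{Dob13a}. The overall strategy is a three–stage construction: (i) extract the exponential expansion from $\chi_\mu>0$ via the ergodic theorem on the natural extension, (ii) identify a measurable positive radius $\alpha(y)$ on which all backward branches of $f^n$ remain diffeomorphic by keeping the backward orbit quantitatively away from $\mbox{Crit}(f)$, and (iii) derive the summable distortion bound (3) from a Koebe/Denjoy–type estimate that crucially uses non–flatness.

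For stage (i), I would apply Lemma~\ref{lem_pointwiseergodic} (or the usual Birkhoff theorem on $(\cZ,F,\nu)$) to the function $\log|Df|\circ\pi$, whose $\nu$–integral equals $\chi_\mu$. This gives $\nu$–a.e. a tempered function $\theta(y)$ realizing the two–sided bound
$$
\tfrac{1}{\theta(y)}e^{n(\chi_\nu-\eta)}\le |Df^n(\pi(F^{-n}(y)))|\le\theta(y)e^{n(\chi_\nu+\eta)},
$$
which is statement (4). One then sets $\alpha(y)$ to be the supremum of radii $r<1/2$ such that, for every $n\ge0$, the connected component of $f^{-n}(B(\pi(y),r))$ through $\pi(F^{-n}(y))$ remains disjoint from $\mbox{Crit}(f)$; positivity of $\alpha$ $\nu$–a.e. is the core step and would follow from a Borel–Cantelli argument based on the tempered behaviour of $\mbox{dist}(\pi(F^{-n}(y)),\mbox{Crit}(f))$ along backward orbits. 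This measurable–radius construction also gives (1)–(2).

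For the distortion estimate (3), on $\pi(F^{-n}(V_y))$ the derivative of $f$ is comparable to $|x-c|^{\ell_c-1}$ times a Hölder factor near each critical point $c$ (by the non–flatness item in the definition of $\sA$), so the deviation $|\log|Df(\pi(F^{-i}(y)))|-\log|Df(\pi(F^{-i}(y')))|\,|$ is controlled by a Hölder power of $|\pi(F^{-i}(y))-\pi(F^{-i}(y'))|$ once the backward orbit stays uniformly away from $\mbox{Crit}(f)$. Combined with the exponential shrinking supplied by stage (i), this makes the series telescope to a geometric bound whose total can be forced below $\log 2$ by shrinking $\alpha(y)$ if necessary. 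The final diameter bound $|\pi(F^{-n}(V_y))|\le 2\theta(y)e^{-n(\chi_\nu-\eta)}$ then follows by integrating the lower bound on $|Df^n|$ across $V_y$ and using the distortion control.

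The hard part, and the essential use of $\sA$, is stage (ii): controlling the geometry of backward orbits near the non–flat critical set so that the radius $\alpha(y)$ is almost surely positive and so that the bounded–distortion series in (3) is summable. The non–flatness hypothesis is what converts the local singularity of $\log|Df|$ into a Hölder modulus of continuity on inverse branches, and it is this single ingredient that prevents the argument from being reduced to the classical smooth Pesin theory. Once (i)–(iii) are in place, Lemma~\ref{lem_dobbs} is assembled by taking $V_y$ to be the natural–extension lift of $B(\pi(y),\alpha(y))$ along the chosen backward branches.
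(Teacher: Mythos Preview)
The paper does not supply its own proof of this lemma: it is stated with the attribution \cite{Dob13a} and used as a black box (a version of Ledrappier's unstable manifold theorem due to Dobbs). So there is nothing to compare your argument against in the paper itself; your sketch already goes further than the paper does.

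As an outline of the Dobbs argument your three stages are broadly correct, but one point in stage~(iii) is misleading. You write that the distortion increment is controlled by a H\"older power of $|\pi(F^{-i}(y))-\pi(F^{-i}(y'))|$ ``once the backward orbit stays uniformly away from $\mbox{Crit}(f)$''. If the backward orbit stayed uniformly away from the critical set, non-flatness would be irrelevant and ordinary smooth Pesin theory would suffice. The genuine content of \cite{Dob13a} is precisely that backward orbits \emph{may} approach $\mbox{Crit}(f)$: non-flatness gives $|\log|Df(x)|-\log|Df(x')||\lesssim |x-x'|/\mbox{dist}(x,\mbox{Crit}(f))$ near a critical point, and what makes the series in (c) summable is the competition between the exponential shrinking of $|\pi(F^{-i}(V_y))|$ from stage~(i) and the (at worst subexponential, by a tempered/Borel--Cantelli estimate) decay of $\mbox{dist}(\pi(F^{-i}(y)),\mbox{Crit}(f))$. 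Your Borel--Cantelli remark in stage~(ii) is the right mechanism, but it should feed into the distortion estimate rather than be used to avoid the critical set altogether. With that correction, your sketch matches the standard route to the result.
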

\begin{rema}\label{rem_pesintheory}
A stronger version (with the same proof) of Property $(c)$ is
possible: For each upper semi continuous potential $u\in\cU$, there
is a constant $\widetilde{C}$ such that for each $y'\in V_{y}$, we
have
$$
\sum_{i=0}^{\infty}\left|u(\pi(F^{-i})(y))-u(\pi(F^{-i}(y')))\right|<\widetilde{C}.
$$
\end{rema}

\medskip

The following Lemma follows from the former lemma using known
arguments. We put the detailed proof for completeness.
\begin{lemm}\label{lem_huaibin2}
Let $f:J(f)\to J(f)$ be an interval map in $\sA$, and
$u:J(f)\to\R\cup\{-\infty\}$ be a upper semi continuous potential in
$\cU$. Let $\mu$ be an ergodic measure in $\cM(f, J(f))$ with
$\chi_{\mu}>0$. Then there is a subset $X^*$ of $~J(f)$ full measure
with respect to $\mu$ possessing the following property: For every
point $x\in X^*$ and every $t<0$, there exist $\bar{\rho}_{x}>0$,
$D>0$ and a strictly increasing sequence of positive integers
$(n_{l})_{l=1}^{+\infty}$, such that for every $l>0$, we can choose
a point $x_{n_{l}}\in f^{-n_{l}}(x)$ and a connected component
$W_{n_l}$ of $f^{-n_{l}}(B(x,\bar{\rho}_{x}))$ containing
$x_{n_{l}}$ so that:
\begin{enumerate}
  \item $x_{n_{l+1}}\in f^{-(n_{l+1}-n_{l})}(x_{n_l})$;
  \item For every point $y\in W_{n_l}$,
  \begin{equation}\label{equ_pesinsequence2}
  S_{n_{l}}(-tu)(y)\geq n_{l}\int_{J(f)}-tud\mu-D;
  \end{equation}
  \item $\lim_{l\to\infty}|W_{n_l}|=0.$
\end{enumerate}
\end{lemm}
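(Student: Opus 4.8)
The plan is to run a standard Pesin-type pullback construction inside the natural extension, combining Dobbs' Lemma~\ref{lem_dobbs} (bounded distortion and exponentially shrinking diameters along an inverse orbit) with the pointwise ergodic theorem Lemma~\ref{lem_pointwiseergodic} applied to the \emph{inverse} of the natural extension, which is what selects the subsequence $(n_{l})$ along which the Birkhoff sums of $-tu$ are nearly maximal.

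First I would pass to the natural extension $(\cZ,F,\nu)$ of $(J(f),f,\mu)$, so that $\pi_{*}\nu=\mu$ and the Lyapunov exponent of $\nu$ equals $\chi_{\mu}>0$. Fix $\eta:=\chi_{\mu}/2$ once and for all, and let $\alpha,\theta$ be the measurable functions produced by Lemma~\ref{lem_dobbs} for this $\eta$. Because $\mu$ is ergodic with $\chi_{\mu}>0$ and the critical points are non-flat, near each $c\in\mbox{Crit}'(f)$ one has $\log|Df|=(\ell_{c}-1)\log|x-c|+O(1)$, whence $\int_{J(f)}\log|x-c|\,d\mu>-\infty$ and so $\psi_{0}:=u\circ\pi$ is $\nu$-integrable with $\int\psi_{0}\,d\nu=\int u\,d\mu$. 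Since $F$ is invertible and $\nu$ is ergodic, so is $F^{-1}$; applying Lemma~\ref{lem_pointwiseergodic} to $(\cZ,F^{-1},\nu)$ and $\psi_{0}$ gives a full-measure set $Z'\subset\cZ$ such that
$$
\limsup_{n\to\infty}\ \sum_{i=0}^{n-1}\Big(\psi_{0}(F^{-i}(\hat z))-\int u\,d\mu\Big)\ \geq\ 0\qquad(\hat z\in Z').
$$
I would then take $\hat Z^{*}\subset\cZ$ to be the full-measure set of points $y^{*}$ satisfying the conclusions of Lemma~\ref{lem_dobbs} and Remark~\ref{rem_pesintheory}, with $\theta(y^{*})<\infty$ and $F^{-1}(y^{*})\in Z'$, and set $X^{*}:=\pi(\hat Z^{*})$, which has full $\mu$-measure.

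Now fix $x\in X^{*}$ and $t<0$, and pick $y^{*}\in\hat Z^{*}$ with $\pi(y^{*})=x$. Put $\bar\rho_{x}:=\alpha(y^{*})$, so $B(x,\bar\rho_{x})=\pi(V_{y^{*}})$, and for $n\geq1$ set $x_{n}:=\pi(F^{-n}(y^{*}))$ and $W_{n}:=\pi(F^{-n}(V_{y^{*}}))$. By Lemma~\ref{lem_dobbs}(1)--(2), $x_{n}\in f^{-n}(x)$ and $W_{n}$ is the connected component of $f^{-n}(B(x,\bar\rho_{x}))$ containing $x_{n}$, with $f^{n}|_{W_{n}}$ a homeomorphism onto $B(x,\bar\rho_{x})$; property~(a) is then immediate from $f^{\,n_{l+1}-n_{l}}(x_{n_{l+1}})=x_{n_{l}}$, and property~(c) from Lemma~\ref{lem_dobbs}(4): $|W_{n}|\leq 2\theta(y^{*})e^{-n\chi_{\mu}/2}\to 0$. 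For~(b), the displayed $\limsup$ inequality at $F^{-1}(y^{*})$ produces a strictly increasing sequence $(n_{l})$ — which may be chosen \emph{independently of $t$} — with $\sum_{i=1}^{n_{l}}u(\pi(F^{-i}(y^{*})))\geq n_{l}\int u\,d\mu-1$ for all $l$. Given $y\in W_{n_{l}}$, I would pick $y'\in F^{-n_{l}}(V_{y^{*}})$ with $\pi(y')=y$ and put $w:=F^{n_{l}}(y')\in V_{y^{*}}$, so that $y=\pi(F^{-n_{l}}(w))$; iterating $\pi\circ F=f\circ\pi$ gives $f^{j}(y)=\pi(F^{-(n_{l}-j)}(w))$ for $0\leq j<n_{l}$, hence $S_{n_{l}}(-tu)(y)=-t\sum_{i=1}^{n_{l}}u(\pi(F^{-i}(w)))$. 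Remark~\ref{rem_pesintheory}, applied to $u$ with constant $\widetilde C=\widetilde C(y^{*})$, bounds $\sum_{i=1}^{n_{l}}u(\pi(F^{-i}(w)))$ below by $\sum_{i=1}^{n_{l}}u(\pi(F^{-i}(y^{*})))-\widetilde C$; multiplying by $-t>0$ and combining with the previous bound,
$$
S_{n_{l}}(-tu)(y)\ \geq\ n_{l}\int_{J(f)}(-tu)\,d\mu-|t|\,(1+\widetilde C),
$$
so $D:=|t|(1+\widetilde C)$ works and~(b) holds.

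Essentially all the real work is already packaged in Dobbs' Lemma~\ref{lem_dobbs}; what needs care is bookkeeping of two kinds. First, the backward/forward indexing when transporting the ergodic theorem to $F^{-1}$: the Birkhoff sum $S_{n_{l}}(-tu)$ along the pull-back orbit of a point $y\in W_{n_{l}}$ is precisely the sum of $-tu$ over the \emph{reversed} orbit $\pi(F^{-i}(w))$, $1\leq i\leq n_{l}$, so the ergodic averages must be read off $F^{-1}$, not $F$. Second — and this is the only genuinely delicate point — the comparison of Birkhoff sums must go through Remark~\ref{rem_pesintheory} (summability of distortion over the \emph{entire} inverse orbit) rather than ordinary uniform continuity, since $u$ equals $-\infty$ on $\Lambda(u)$ and it is only this summable-distortion bound that keeps the constant $D$ finite. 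Beyond these two points I do not expect any obstacle.
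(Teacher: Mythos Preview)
Your proposal is correct and follows essentially the same route as the paper: pass to the natural extension, apply Dobbs' Lemma~\ref{lem_dobbs} to get the diffeomorphic pull-backs $W_n$ with exponentially shrinking diameters, apply the pointwise ergodic Lemma~\ref{lem_pointwiseergodic} to $F^{-1}$ to select the subsequence $(n_l)$, and use Remark~\ref{rem_pesintheory} for the uniform distortion bound yielding~(b). Your version is in fact slightly cleaner in two places: you apply the ergodic theorem to $u\circ\pi$ rather than to $-tu\circ\pi$ (so the set $Z'$ and the subsequence $(n_l)$ are visibly independent of $t$, which the paper's phrasing obscures), and you track the off-by-one shift by evaluating the $\limsup$ at $F^{-1}(y^{*})$ so that the Birkhoff sum indices $i=1,\dots,n_l$ match $S_{n_l}(-tu)$ exactly; the paper glosses over this shift and absorbs it into the constant.
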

\begin{proof}[Proof of Lemma \ref{lem_huaibin2}]
Let $(\cZ,F,\nu)$ be the natural extension of $(J(f),f,\mu)$, and
thus $\nu$ is also invariant and ergodic with respect to $F^{-1}$.
For each $t<0$, by applying Lemma \ref{lem_pointwiseergodic} for
$F^{-1}$ to the integrable function\footnote{The integrability
directly follows from the upper semi-continuity of $-tu\circ \pi$,
with $t<0$.} $-tu\circ\pi$, there exists a subset $Z$ of $\cZ$ of
full measure with respect to $\nu$, such that
\begin{equation}\label{equ_limsupsequnce}
    \limsup_{n\to\infty}\sum_{n=0}^{n-1}\left(-tu\circ
\pi(F^{-i}(z_{m})_{m\in\Z_{-}})+\int_{\cZ}tu\circ \pi
d\nu\right)\geq0,~~\forall (z_{m})_{m\in\Z_{-}}\in Z.
\end{equation}
Taking a subset of $Z$ of full measure with respect to $\nu$ if
necessary, by Lemma~\ref{lem_dobbs}, there is a function
$\alpha:\cZ\to(0,1/2)$ such that $Z$ and $\alpha$ satisfy all the
assertions of Lemma \ref{lem_dobbs}. Define the set $X^*:=\pi(Z)$,
then we have
$$
\mu(X^{*})=\nu(\pi^{-1}(\pi(Z)))\geq \nu(Z)=1.
$$

\medskip

In the rest of the proof, we will verify that $X^{*}$ satisfies the
desired properties. Fix a point $x\in X^*$, and choose a point
$(z_{m})_{m\in\Z_{-}}$ such that $\pi((z_{m})_{m\in\Z_{-}})=x$, let
$V_{(z_{m})_{m\in\Z_{-}}}$ be given by Lemma \ref{lem_dobbs} for the
point $(z_{m})_{m\in\Z_{-}}$, and put
$\rho_{x}:=\alpha((z_{m})_{m\in\Z_{-}})$. Moreover, for each integer
$j\geq1,$ put
$$
y_{j}:=\pi(F^{-j}(z_{m})_{m\in\Z_{-}})=z_{j}\in f^{-j}(x),
$$
and $U_{j}:=\pi(F^{-j}(V_{(z_{m})_{m\in\Z_{-}}})).$ Using Assertions
$(a)$ and $(b)$ of Lemma \ref{lem_dobbs}, it follows that for every
integer $j\geq1$, $U_{j}$ is the connect component of
$f^{j}(B(x,\rho_{x}))$ containing $y_{j}$, and $f^{j}:U_{j}\to
B(x,\rho_{x})$ is diffeomorphic. Moreover, using Remark
\ref{rem_pesintheory} and part $(4)$ of Lemma \ref{lem_dobbs}, it
follows that there exist $C'>0$ and $\lambda>1$, such that every
$n\geq1$, we have
\begin{equation}\label{equ_expshrinking}
    |U_{n}|\leq C'\lambda^{-n},
\end{equation}
and there is a constant $\widetilde{C}>0$, such that for every two
points $x,y\in U_{n}$, we have
\begin{equation}\label{equ_bounded distortion1}
|S_{n}(-tu)(x)-S_{n}(-tu)(y)|<-t\widetilde{C},~~\forall t<0.
\end{equation}

Finally, fix a $D'>0$, the inequality \eqref{equ_limsupsequnce}
yields that there is a strictly increasing sequence of positive
integers $(n_{l})_{l=1}^{+\infty}$ such that for every $l\geq0$, we
have
\begin{equation}\label{equ_increasing sequence}
    \sum_{i=0}^{n_{l}-1}-tu\circ\pi(F^{-i}(z_{m})_{m\in\Z_{-}})\geq
    n_{l}\int_{\cZ}-tu\circ \pi
    d\nu-D'=n_{l}\int_{~J}-tud\mu-D'.
\end{equation}

For each integer $l\geq1$, put $x_{n_{l}}:=y_{n_{l}}$ and
$W_{n_l}:=U_{n_{l}}$, then statement $(a)$ and $(c)$ are
automatically satisfied from the definition of $y_{n_{l}}$ and
inequality \eqref{equ_expshrinking} respectively, and Statement
$(b)$ follows from \eqref{equ_bounded distortion1} and
\eqref{equ_increasing sequence} with $D:=D'-t\widetilde{C}$. The
proof of this lemma is thus completed.
\end{proof}

The following three technical lemmas are also required.

\begin{lemm}\cite[Lem3.2]{LiRiv13a}\label{lem_boundary}
Given an interval map $f:J(f)\to J(f)$ in $\sA$, there is an
$\epsilon>0$ such that the following property holds. Let $J_{0}$ be
an interval contained in $I$ satisfying $|J_{0}|\leq\varepsilon$,
let $n\geq1$ be an integer, and let $\hat{J}$ be a pull-back of
$J_{0}$ by $f^{n}$, whose closure is contained in the interior of
$I$. Suppose in addition that for each $j\in\{1,\cdots,n\}$ the
pull-back of $J_{0}$ by $f^{j}$ containing in $f^{n-j}(\hat{J})$ has
length bounded from above by $\varepsilon$. Then $f^{n}(\partial
\hat{J})\subset
\partial J_{0}.$
\end{lemm}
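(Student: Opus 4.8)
The plan is to reduce the statement to a single‑step assertion about how $f$ moves the boundary of a pull‑back, and then propagate it along the pull‑back chain $\hat J=\hat J_{n},\hat J_{n-1},\dots,\hat J_{1},\hat J_{0}=J_{0}$, where for $k\in\{0,\dots,n\}$ we let $\hat J_{k}$ be the pull‑back of $J_{0}$ by $f^{k}$ that contains $f^{\,n-k}(\hat J)$ (so $\hat J_{n}=\hat J$). The single‑step assertion I would isolate is: \emph{if $K$ is a compact subinterval of $I$, $L$ a connected component of $f^{-1}(K)$, and $p$ an endpoint of $L$ lying in the interior of $I$, then $f(p)\in\partial K$.} This is proved by contradiction: if $f(p)$ were an interior point of $K$, then, since $p$ lies in the interior of $I$, the map $f$ is defined and continuous on a two‑sided neighbourhood of $p$ and $f^{-1}(\operatorname{int}K)$ is open, so some open interval around $p$ is contained in $f^{-1}(\operatorname{int}K)\subseteq f^{-1}(K)$; being connected and meeting $L$, that interval lies in the component $L$, forcing $p\in\operatorname{int}L$, a contradiction. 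Note that this step uses no length hypothesis whatsoever.

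Next I would record that $\hat J_{k}$ is a connected component of $f^{-1}(\hat J_{k-1})$ for each $k\in\{1,\dots,n\}$: indeed $\hat J_{k}$ is a component of $f^{-k}(J_{0})=f^{-1}\bigl(f^{-(k-1)}(J_{0})\bigr)$, hence a component of $f^{-1}(C)$ for the component $C$ of $f^{-(k-1)}(J_{0})$ with $f(\hat J_{k})\subseteq C$; and since $f(\hat J_{k})\supseteq f\bigl(f^{\,n-k}(\hat J)\bigr)=f^{\,n-(k-1)}(\hat J)$, that component $C$ must be $\hat J_{k-1}$. One then argues by induction on $j$ that $f^{j}(\partial\hat J)\subseteq\partial\hat J_{n-j}$: the case $j=0$ is trivial, and for the step from $j$ to $j+1$ one takes $p\in\partial\hat J$, sets $q:=f^{j}(p)\in\partial\hat J_{n-j}$, and applies the single‑step assertion with $L=\hat J_{n-j}$ and $K=\hat J_{n-j-1}$, provided $q$ lies in the interior of $I$. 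Taking $j=n$ then gives $f^{n}(\partial\hat J)\subseteq\partial\hat J_{0}=\partial J_{0}$, which is the conclusion. (The possibility that some $f^{i}(\hat J)$ is only a proper subinterval of $\hat J_{n-i}$, i.e. that the pull‑back ``folds'', does not hurt: folding can only make both endpoints of $f^{n}(\hat J)$ land on a single endpoint of $J_{0}$, which is still inside $\partial J_{0}$.)

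The one gap — and the main obstacle — is precisely the proviso that every intermediate image $q=f^{j}(p)$ with $0\le j\le n-1$ and $p\in\partial\hat J$ stays in the interior of $I$. For $j=0$ this is given, since $\overline{\hat J}\subset\operatorname{int}(I)$; but a later image could a priori land on $\partial I$, and there the single‑step argument fails, because a one‑sided neighbourhood of an endpoint of $I$ may map into the interior of $K$. This is exactly where the hypotheses $|J_{0}|\le\epsilon$ and $|\hat J_{k}|\le\epsilon$ for $k\in\{1,\dots,n\}$, together with a suitable choice of $\epsilon=\epsilon(f)$, are used: one picks $\epsilon$ small relative to the distances in $I$ between $\partial I$, the finitely many critical points and critical values, and relative to the distortion/expansion control available for pull‑backs of short intervals for maps of class $\sA$, so that any pull‑back in such a chain that reaches $\partial I$ is forced to be mapped by $f$ monotonically onto its image with the $\partial I$‑endpoint sent to an endpoint; here the topological exactness of $f$ on $J(f)$ and the non‑flatness of the critical points are what make the required smallness of $\epsilon$ effective. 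I expect this boundary bookkeeping — fixing the dependence of $\epsilon$ on $f$ and checking it rules out every bad configuration near $\partial I$ — to be the technical heart, while the chain induction above is routine.
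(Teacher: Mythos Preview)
This lemma is not proved in the present paper --- it is quoted from \cite[Lem~3.2]{LiRiv13a} and used as a black box --- so there is no in-paper argument to compare your proposal against.

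Judged on its own, your reduction to a single-step claim plus induction along the pull-back chain is the right skeleton, and your proof of the single-step assertion is correct and clean. You have also correctly isolated the one real obstacle: an intermediate image $f^{j}(p)$ may land on $\partial I$, where the two-sided open-neighbourhood argument is unavailable.

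Your last paragraph, however, does not close this gap. Monotonicity of $f$ near an endpoint $a\in\partial I$ only tells you that $f(a)$ is an endpoint of the image interval $f(\hat J_{k})$, not of the possibly strictly larger interval $\hat J_{k-1}$; and the references to distortion, non-flatness and topological exactness are not doing any identifiable work here. Two concrete observations you are missing sharpen the picture considerably. First, since $f(I)\subseteq I$, any \emph{interior} preimage of a point of $\partial I$ is a local extremum of $f$ and hence a critical point; so the orbit $p,f(p),\dots$ can reach $\partial I$ only by first passing through some $c\in\mathrm{Crit}(f)$ with $f(c)\in\partial I$. Second, any subinterval of $I$ containing a point $a\in\partial I$ automatically has $a$ as an endpoint, so the induction step that \emph{lands on} $\partial I$ is free --- it is only the step \emph{leaving} $\partial I$ that needs work, and this reduces to a finite local analysis at the endpoints of $I$ and at the finitely many critical points mapping there, with the smallness of $\varepsilon$ confining the relevant pull-backs to neighbourhoods where that analysis applies. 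You have named the obstacle correctly but have not supplied the mechanism that removes it.
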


\medskip

\begin{lemm}\cite[Lem3.3]{LiRiv13a}\label{lem_oneside}
Let $f:J(f)\to J(f)$ be an interval map in $\sA$, and let $a$ be a
point of $J(f)$ such that $(a,+\infty)$(resp. $(-\infty,a)$)
intersects $J(f)$. Then for every open interval $U$ intersecting
$J(f)$, and every sufficient large integer $n\geq1$, there is a
point $y$ of $U$ in $f^{-n}(a)$ such that for every $\epsilon>0$,
the set $f^{n}(B(y,\epsilon))$ intersects
$(a,+\infty)$(resp.$(-\infty,a)$).
\end{lemm}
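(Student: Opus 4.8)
The plan is to deduce Lemma \ref{lem_oneside} directly from topological exactness of $f|_{J(f)}$ and the connectedness of $U$, without needing the pull-back control of Lemma \ref{lem_boundary}. Here $B(y,\epsilon)$ and the iterates $f^{n}$ are taken in the ambient interval $I$, so that $f^{n}(B(y,\epsilon))$ is a genuine subinterval of $I$; let me note in passing that the statement would fail if $B(y,\epsilon)$ were read inside $J(f)$, for instance when $a$ is the right endpoint of a complementary gap of $J(f)$, since then continuity of $f^{n}$ forbids $J(f)$-points arbitrarily close to $y$ to be sent to values bounded away above $a$. It is enough to treat the case where $(a,+\infty)$ meets $J(f)$; the case of $(-\infty,a)$ is identical after replacing every occurrence of ``$>a$'' by ``$<a$''. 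So fix once and for all a point $b\in J(f)$ with $b>a$.

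The first step is to record that for every sufficiently large $n$ the iterate $f^{n}$ spreads $U$ over the whole Julia set. Indeed $U\cap J(f)$ is a nonempty open subset of $J(f)$, so topological exactness gives an integer $N_{0}\ge 0$ with $J(f)\subseteq f^{N_{0}}(U\cap J(f))$; and since the complete invariance of $J(f)$ yields $f^{m}(J(f))=J(f)$ for every $m\ge 0$, this propagates to $f^{n}(U)\supseteq f^{n}(U\cap J(f))\supseteq J(f)$ for every $n\ge N_{0}$. In particular $\{a,b\}\subseteq f^{n}(U)$ for all such $n$.

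The second step is the heart of the matter. Fix $n\ge N_{0}$ and set $O:=\{x\in U: f^{n}(x)>a\}$, an open subset of the interval $U$. It is nonempty, because $b\in J(f)\subseteq f^{n}(U)$ furnishes a point of $U$ whose $f^{n}$-image is $b>a$; and it is a \emph{proper} subset of $U$, because $a\in J(f)\subseteq f^{n}(U)$ furnishes a point of $U$ whose $f^{n}$-image is exactly $a$, hence not in $O$. A nonempty proper open subset of a connected space cannot be relatively closed, so there is a point $y\in U$ with $y\in\overline{O}$ but $y\notin O$. I would then verify that this $y$ is the desired point: from $y\notin O$ we get $f^{n}(y)\le a$, while from $y\in\overline{O}$ we get a sequence $x_{k}\to y$ with $f^{n}(x_{k})>a$, so that $f^{n}(y)=\lim_{k}f^{n}(x_{k})\ge a$; hence $f^{n}(y)=a$, i.e.\ $y\in U\cap f^{-n}(a)$ (and $y\in J(f)$ automatically, by complete invariance). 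Finally, for each $\epsilon>0$ the ball $B(y,\epsilon)$ meets $O$ because $y\in\overline{O}$, so $f^{n}(B(y,\epsilon))$ contains a value strictly larger than $a$, i.e.\ it intersects $(a,+\infty)$.

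Since the argument is short, there is no serious obstacle to anticipate; the two points that need a moment of care are making sure $O$ is a \emph{proper} subset of $U$ — this is precisely where the hypothesis that $(a,+\infty)$ meets $J(f)$, together with $a\in f^{n}(U)$, is used, for otherwise $O$ could be all of $U$ — and extracting a boundary point of $O$ that lies in the \emph{open} set $U$ rather than at an end of $U$, which is immediate from connectedness. The ``resp.'' case runs verbatim with $O$ replaced by $\{x\in U: f^{n}(x)<a\}$ and a point $b'\in J(f)$ with $b'<a$ in place of $b$.
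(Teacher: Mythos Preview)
Your argument is correct. The paper does not supply its own proof of this lemma; it is quoted verbatim from \cite[Lem~3.3]{LiRiv13a}, so there is no in-paper proof to compare against. Your approach---using topological exactness to guarantee $\{a,b\}\subseteq f^{n}(U)$ for all large $n$, then taking a boundary point in $U$ of the open set $O=\{x\in U:f^{n}(x)>a\}$---is a clean, self-contained connectedness argument that avoids any auxiliary pull-back machinery. The two places that require care (that $O$ is proper in $U$, and that the boundary point lies in the open interval $U$ rather than at an endpoint) are handled correctly: the first uses $a\in f^{n}(U)$, and the second follows because a nonempty proper open subset of a connected set cannot be relatively closed. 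Your side remark about the interpretation of $B(y,\epsilon)$ in the ambient interval $I$ (rather than in $J(f)$) is also well taken and consistent with how the lemma is applied later in \S\ref{sec_IMFS}.
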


\medskip

\begin{lemm}\cite[Lem A.2]{Riv13b}\label{lem_pullback} Given an interval map
$f:J(f)\to J(f)$ in $\sA$, then for every $\kappa>0$, there is a
$\delta>0$ such that for every $x\in J(f)$, every integer $n\geq1$,
and every pull-back $W$ of $B(x,\delta)$ by $f^{n}$, we have
$|W|<\kappa.$
\end{lemm}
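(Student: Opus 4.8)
The plan is to argue by contradiction and, via a compactness argument, to extract a nondegenerate interval whose forward iterates collapse to a point along a subsequence of times; this configuration is then ruled out using the two genuinely dynamical features of a map in $\sA$ --- topological exactness on $J(f)$ together with the complete invariance $f(J(f))=J(f)$, and the absence of wandering intervals.

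Suppose the conclusion fails for some $\kappa>0$. Then there are $\delta_{k}\downarrow 0$, points $x_{k}\in J(f)$, integers $n_{k}\ge 1$, and pull-backs $W_{k}$ of $B(x_{k},\delta_{k})$ by $f^{n_{k}}$ with $|W_{k}|\ge\kappa$. Passing to subsequences I may assume $x_{k}\to x_{\infty}\in J(f)$ and, by compactness of the hyperspace of closed subintervals of $I$ under the Hausdorff metric, $\overline{W_{k}}\to W_{\infty}$, a closed interval with $|W_{\infty}|\ge\kappa$. First I would show $n_{k}\to\infty$: otherwise a further subsequence has $n_{k}\equiv n$ constant, and since $f^{n}$ is continuous we get $f^{n}(\overline{W_{k}})\to f^{n}(W_{\infty})$ in the Hausdorff metric; but $f^{n}(\overline{W_{k}})=f^{n_{k}}(\overline{W_{k}})\subseteq\overline{B(x_{k},\delta_{k})}\to\{x_{\infty}\}$, so $f^{n}$ is constant on the nondegenerate interval $W_{\infty}$, which is impossible because a map in $\sA$ is non-flat and hence nowhere locally constant.

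Assume henceforth $n_{k}\to\infty$. Fix $\epsilon\in(0,\kappa/4)$, write $W_{\infty}=[c_{1},c_{2}]$ and put $J_{\epsilon}:=[c_{1}+\epsilon,c_{2}-\epsilon]$; then $\overline{W_{k}}\supseteq J_{\epsilon}$ for all large $k$, hence $|f^{n_{k}}(J_{\epsilon})|\le\operatorname{diam}\overline{B(x_{k},\delta_{k})}\le 2\delta_{k}\to 0$, so $\liminf_{n\to\infty}|f^{n}(J_{\epsilon})|=0$. The key claim is that some forward image $f^{m}(W_{\infty})$, $m\ge 0$, has a point of $J(f)$ in its interior. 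Granting this, topological exactness applied to the nonempty relatively open subset $\operatorname{int}(f^{m}(W_{\infty}))\cap J(f)$ of $J(f)$ gives $N\ge 0$ with $f^{m+N}(W_{\infty})\supseteq J(f)$, hence $f^{m+N}(W_{\infty})$ is a closed interval containing $[\min J(f),\max J(f)]$; by Hausdorff convergence $f^{m+N}(\overline{W_{k}})$ then contains, for all large $k$, an interval $T$ independent of $k$ with $\operatorname{int}(T)\cap J(f)\ne\emptyset$, so a second application of topological exactness yields $N'\ge 0$ with $f^{m+N+N'}(\overline{W_{k}})\supseteq J(f)$ for all large $k$; applying $f(J(f))=J(f)$ we obtain $f^{n_{k}}(\overline{W_{k}})=f^{n_{k}-m-N-N'}\big(f^{m+N+N'}(\overline{W_{k}})\big)\supseteq J(f)$ as soon as $n_{k}\ge m+N+N'$, so $\operatorname{diam}f^{n_{k}}(\overline{W_{k}})\ge\operatorname{diam}J(f)>0$, contradicting $f^{n_{k}}(\overline{W_{k}})\subseteq\overline{B(x_{k},\delta_{k})}$.

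The main obstacle is the key claim, i.e.\ showing that the forward orbit of the nondegenerate interval $W_{\infty}$ does not remain forever inside the forward-invariant open set $I\setminus J(f)$, the union of the gaps of $J(f)$. I would attack it using the absence of wandering intervals for $C^{1}$ interval maps with H\"{o}lder derivative, non-flat critical points and no neutral cycles (\cite{dMvS93}), together with the fact that maps in $\sA$ have no periodic attractors: if $W_{\infty}$ and all its forward iterates lay in gaps, then the sequence of visited gaps is either made of pairwise distinct gaps --- in which case the forward iterates of $W_{\infty}$ are pairwise disjoint, making $W_{\infty}$ a wandering interval not attracted to any periodic cycle, contrary to \cite{dMvS93} --- or eventually periodic, in which case one obtains an $f^{q}$-invariant gap carrying a hyperbolic repelling periodic point of $f$; inside that gap $f^{q}$ restricts to a map again having repelling periodic points and non-flat critical points, and the exactness-and-invariance mechanism of the previous paragraph, applied now to the Julia set of this restricted map, should show that a nondegenerate subinterval cannot have iterates of length tending to $0$. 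In either case $\liminf_{n}|f^{n}(J_{\epsilon})|>0$, contradicting the previous paragraph, which establishes the claim and completes the proof.
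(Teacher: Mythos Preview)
The paper does not give its own proof of this lemma; it is quoted verbatim from \cite[Lem~A.2]{Riv13b}, so there is no in-paper argument to compare against. Your contradiction--compactness scheme is the natural one, and both the reduction to the key claim and the topological-exactness argument you run once the key claim is granted are correct.

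The genuine gap is precisely where you flag it: the eventually periodic sub-case of the key claim. Your proposed resolution there---restrict $f^{q}$ to an $f^{q}$-invariant gap $G$ and rerun ``the exactness-and-invariance mechanism of the previous paragraph, applied now to the Julia set of this restricted map''---does not work as written. You have not shown that $f^{q}|_{G}$ lies in $\sA$ (in particular, that it is topologically exact on a nonempty, completely invariant Julia set), so the previous paragraph cannot be invoked; as stated the step is circular.

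The fix is that this sub-case is in fact vacuous for maps in $\sA$. Suppose $G$ is a gap with $f^{q}(\overline{G})\subseteq\overline{G}$ and set $g:=f^{q}$. By Brouwer there is a fixed point $p_{0}\in\overline{G}$. If $p_{0}\in G$ then $p_{0}$ is a periodic point of $f$ lying in the Fatou set; but every periodic point of a map in $\sA$ is hyperbolic repelling and hence lies in $J(f)$, a contradiction. If $p_{0}\in\partial G$, say $p_{0}$ is the left endpoint of $G=(p_{0},b)$, then $g(G)\subseteq G$ forces $g'(p_{0})>1$, so $g(x)>x$ for $x\in G$ near $p_{0}$. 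Either $g(x)-x$ vanishes somewhere in $G$, giving a fixed point of $g$ in $G$ and the same contradiction as before, or $g(x)>x$ on all of $G$, in which case every $g$-orbit in $G$ increases to $b$, so $g(b)=b$; but then $|g'(b)|>1$ is incompatible with $g(x)>x$ for $x<b$ close to $b$ while $g(G)\subseteq G$. (Equivalently, one may simply cite the classification of Fatou components in \cite{dMvS93}: each lies in the basin of an attracting or neutral cycle, neither of which exists for $f\in\sA$; in particular $J(f)=I$ and the key claim is immediate.) Thus only the wandering case remains, which you dispose of correctly via the absence of wandering intervals, and your argument is complete.
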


\medskip

\subsection{Proof of Proposition \ref{prop_2new}}
We are ready to prove Proposition \ref{prop_2new} in this
subsection. The proof is divided into two parts. In Part $(1)$ below
we define a IMFS concretely. In Part $(2)$, we show the IMFS is free
and satisfies \eqref{equ_sum}.

\begin{proof}
Let $\epsilon>0$ be the constant given by Lemma \ref{lem_boundary}
and let $\delta>0$ be the constant given by Lemma \ref{lem_pullback}
for $\kappa=\varepsilon$. Let $X^*$ be the subset in $J(f)$ given by
Lemma \ref{lem_huaibin2}, and put $X$ be the complement of $X^{*}$
of the set of pre-periodic points of $f$. Since the measure $\mu$ is
ergodic and not supported on a periodic orbit, the set $X$ has full
measure for $\mu$. Fix a point $x_{0}\in X$ which is not an end
point of $I$.

\medskip

{\bf 1.} Let
$\bar{\rho}_{x_{0}},(n_{l})_{l=1}^{+\infty},(x_{n_{l}})_{l=1}^{\infty}$
and $(W_{n_l})_{l=1}^{+\infty}$ be given by Lemma \ref{lem_huaibin2}
with $x=x_{0}$. Fix $
\rho\in(0,\min\{\delta,\bar{\rho}_{x_{0}},\mbox{dist}(x_{0},\partial
I)\}).$ Taking a subsequence if necessarily, and suppose
$$
\lim\limits_{l\to\infty}x_{n_{l}}=b.
$$
Using Lemma \ref{lem_huaibin2} for $u=G$, it follows that for each
$t<0$, there is a constant $D>0$, such that for every $z\in
W_{n_{l}}$, we have
\begin{equation}\label{equ_good part}
    S_{n_{l}}(-tG)(z)\geq n_{l}\int_{J(f)}-tGd\mu-D
\end{equation}
On the other side, Proposition \ref{lem_cohomology} yields that for
every non empty forward invariant finite set $\Sigma\subset J(f)$,
we have $f^{-1}(\Sigma)\backslash\Sigma\nsubseteq\Lambda(G)$.
Therefore, Proposition \ref{lem_postercritical} implies that there
exists an integer $N\geq0$ such that $b_{N}:=f^{N}(b)$ is
$\Lambda(G)$-normal and $b_{N}\notin\Lambda(G)$, and thus
\begin{equation}\label{equ_cometonormalpoint}
\lim\limits_{l\to\infty}x_{n_{l}-N}=\lim\limits_{l\to\infty}f^{N}(x_{n_{l}})=f^{N}(\lim_{l\to\infty}x_{n_{l}})=f^{N}(b)=b_{N}.
\end{equation}
So for each $\bar{\rho}>0$, the point
$x_{n_{l}-N}\in[b_{N}-\bar{\rho},b_{N}]$ for infinitely many $l$.
Meanwhile, using the normality of $b_{N}$ and the finiteness of the
critical set, we have
\begin{enumerate}
  \item there is an integer $\widetilde{M}>0$, and $b'_{N}$ such
  that $f^{\widetilde{M}}(b'_{N})=b_{N}$ and $f^{j}(b'_{N})\notin\Lambda(G),~\forall
  j=0,1,\cdots,\widetilde{M}$;
  \item every pre-image of $b'_{N}$ is not in $\mbox{Crit}'(f)$.
\end{enumerate}
By statement $(a)$, there exist $\widetilde{\rho}>0$ and a closed
one side subinterval of $B(b'_{N},\widetilde{\rho})$ (say
$(b'_{N}-\widetilde{\rho},b'_{N})$) intersecting with $J(f)$ and
satisfying
\begin{equation}\label{equ_backward1}
f^{\widetilde{M}}([b'_{N}-\widetilde{\rho},b'_{N}])=[b_{N}-\bar{\rho},b_{N}],~~
f^{j}([b'_{N}-\widetilde{\rho},b'_{N}])\cap\Lambda(G)=\emptyset.~~\forall
j=0,1,\cdots,\widetilde{M}.
\end{equation}
This implies there exist points
$x_{n_{l}-N+\widetilde{M}}\in[b'_{N}-\widetilde{\rho},b'_{N}]$ such
that $f^{\widetilde{M}}(x_{n_{l}-N+\widetilde{M}})=x_{n_{l}-N}$, for
infinitely many $l$.

\medskip

By the definition of set $X$, $x_{0}$ is not pre-periodic, so
$x_{0}$ is not in the boundary of a periodic Fatou component. Hence,
we can assume that there are two disjoint open intervals
$\widetilde{U}_{0}$ and $\widetilde{U}_{1}$ in $(x_{0}-\rho,x_{0})$,
each of them intersecting $J(f)$. Note that $f$ is topological exact
on $J(f)$, and $(b'_{N}-\widetilde{\rho},b_{N}')$ interests $J(f)$,
so by applying Lemma \ref{lem_oneside} for $a=b_{N}'$, we conclude
that there exist an integer $\hat{M}>1$, and two distinct points
$\omega_{0},\omega_{1}$ in $\widetilde{U}_{0}$ and
$\widetilde{U}_{1}$ respectively with
$f^{\hat{M}}(\omega_{0})=f^{\hat{M}}(\omega_{1})=b'_{N}$, such that
for every $\epsilon'>0$, both sets
$f^{\hat{M}}(B(\omega_{0},\epsilon'))$ and
$f^{\hat{M}}(B(\omega_{1},\epsilon'))$ intersect with
$(-\infty,b'_{N})$.

\medskip

On the other hand, due to Statement $(b)$, we can reduce the value
of $\bar{\rho}$ (and so $\widetilde{\rho}$ is reduced
correspondingly), such that the pull backs $U_{0},U_{1}$ of
$(b'_{N}-\widetilde{\rho},b'_{N})$ by $f^{\hat{M}}$ has the
properties: $\omega_{i}\in U_{i}\subset\widetilde{U}_{i},~~~\forall
i=0,1$ and
\begin{equation}\label{equ_backward2}
f^{j}(U_{i})\cap\mbox{Crit}'(f)=\emptyset,~\forall
i=0,1~\mbox{and}~~ j=0,1,\cdots,\hat{M}.
\end{equation}
So, $U_{0}$ and $U_{1}$ are disjoint and are contained in
$(x_{0}-\rho,x_{0})$, and the points $x_{n_{l}-N+\widetilde{M}}$ is
contained in both $f^{\hat{M}}(U_{0})$ and $f^{\hat{M}}(U_{1})$ for
infinitely many $l$. Together with \eqref{equ_backward1}, let
$M:=\widetilde{M}+\hat{M}$, then
\begin{equation}\label{equ_centerpoints}
\mbox{$x_{n_{l}-N}$ is contained in both $f^{M}(U_{0})$ and
$f^{M}(U_{1})$, for infinitely many $l$.}
\end{equation}

\medskip

In addition, using that $\lim_{l\to\infty}|W_{n_{l}-N}|=0$ and
taking a subsequence if necessarily, then for every $l>0$, we have
the following properties: $n_{l+1}-n_{l}\geq M$, the point
$x_{n_{l}-N}$ is contained in both $f^{M}(U_{0})$ and
$f^{M}(U_{1})$, the length $|W_{n_{l}-N}|<\varepsilon$, and the pull
back $W_{n_{l}-N}$ of $\overline{B(x_{0},\rho)}$ by $f^{n_{l}-N}$
containing $x_{n_{l}-N}$ is contained in $[b_{N}-\bar{\rho},b_{N}]$.
By interchanging $\omega_{0}$ and $\omega_{1}$ and taking a
subsequence if necessarily, we can assume that every $l$, the point
$f^{n_{l+1}-n_{l}-M+N}(x_{n_{l+1}})$ is not contained in $U_{0}$.
For each $l,$ choose the pull back $W'_{n_{l}-N+M}$ of $W_{n_{l}-N}$
by $f^{M}$ that contains a point $x'_{n_{l}-N+M}$ of
$f^{-M}(x_{n_{l}-N})$, and that is contained in $U_{0}$.

\medskip

With this convention, we have $W'_{n_{l}-N+M}\subset
U_{0}\subset(x_{0}-\rho,x_{0})$ for every $l>0$. By our choice of
$\rho$, the closure of $W'_{n_{l}-N+M}$ is contained in the interior
of $I$. On the other hand, using Lemma \ref{lem_pullback}, then the
length of $f^{i}(W'_{n_{l}-N+M})$ is less than $\varepsilon$, for
every $i=0,1,\cdots n_{l}-N+M.$ So Lemma \ref{lem_boundary} yields
that $f^{n_{l}-N+M}(\partial W'_{n_{l}-N+M})$ is contained in
$\partial B(x_{0},\rho)$. Also, note that \eqref{equ_centerpoints}
yields that $f^{n_{l}-N+M}(W'_{n_{l}-N+M})$ contains $x_{0}$, hence
we have the set $f^{n_{l}-N+M}(W'_{n_{l}-N+M})$ contains either
$[x_{0}-\rho,x_{0}]$ or $[x_{0},x_{0}+\rho]$. 
There are two probabilities.
\begin{enumerate}
\item[{\bf Case 1:}] There are infinitely many $l$ such that the
set $f^{n_{l}-N+M}(W'_{n_{l}-N+M})$ contains $[x_{0}-\rho,x_{0}]$.
We can assume this property holds for every $l$, by taking a
subsequence. Then, there is a pull back $W''_{n_{l}-N+M}$ of
$[x_{0}-\rho,x_{0}]$ by $f^{n_{l}-N+M}$ that is contained in
$W'_{n_{l}-N+M}$, and such that
$$
f^{n_{l}-N+M}(W''_{n_{l}-N+M})=[x_{0}-\rho,x_{0}].
$$
With this convention, we have
$$
W_{n_{l}-N+M}''\subseteq W'_{n_{l}-N+M}\subseteq U_{0}\subset
[x_{0}-\rho,x_{0}].
$$
Put
$$
B_{0}:=[x_{0}-\rho,x_{0}],~~M':=M,~~\mbox{and}~~U_{0}':=U_{0}.
$$

\item[{\bf Case 2:}] For every $l$ (outside finitely many exceptions), the set
$f^{n_{l}-N+M}(W'_{n_{l}-N+M})$ contains $[x_{0},x_{0}+\rho]$, but
does not contain $[x_{0}-\rho,x_{0}]$. We can assume this property
holds for every $l$, by taking a subsequence. Note that $x_{0}$ is
not in the boundary of a Fatou component, Lemma \ref{lem_oneside}
and topological exactness yield that there exist an integer
$\bar{M}\geq1$ and a pull back $U_{0}'$ of $U_{0}$ by $f^{\bar{M}}$
that is contained in $(x_{0},x_{0}+\rho)$, and such that
$x_{n_{l}-N+M}'$ is contained in $f^{\bar{M}}(U_{0}')$ for
infinitely many $l$. Take a subsequence if necessarily, assume for
every $l$, we have $n_{l+1}-n_{l}\geq M+\bar{M}$, and
$x_{n_{l}-N+M}'$ is contained in $f^{\bar{M}}(U_{0}')$.

Since for each $l$ the point $f^{n_{l+1}-n_{l}-M}(x_{n_{l+1}-N})$ is
not in $U_{0}$, it implies that the point
$f^{n_{l+1}-n_{l}-M-\bar{M}}(x_{n_{l+1}-N})$ is not in $U_{0}'.$ For
each $l$, choose a pull back $\widetilde{W}'_{n_{l}-N+M+\bar{M}}$ of
$W'_{n_{l}-N+M}$ by $f^{\bar{M}}$ contained in $U_{0}'$ and that
contains a point $\widetilde{x}'_{n_{l}-N+M+\bar{M}}$ of
$f^{-\bar{M}}(x'_{n_{l}-N+M})$. By Lemma \ref{lem_boundary}, the set
$f^{n_{l}-N+M+\bar{M}}(\partial \widetilde{W}_{l}')$ is contained in
$\partial B(x_{0},\rho)$. On the other hand, since the set
$f^{n_{l}-N+M+\bar{M}}(\widetilde{W}'_{n_{l}-N+M+\bar{M}})$ is
contained in $f^{n_{l}-N+M}(W'_{n_{l}-N+M})$. But
$f^{n_{l}-N+M}(W'_{n_{l}-N+M})$ does not contain
$[x_{0}-\rho,x_{0}]$, we conclude that
$f^{n_{l}-N+M+\bar{M}}(\widetilde{W}'_{n_{l}-N+M+\bar{M}})$ maps
both end points of $\widetilde{W}'_{n_{l}-N+M+\bar{M}}$ to the point
$x_{0}+\rho$. Note also that by the construction,
$f^{n_{l}-N+M+\bar{M}}(\widetilde{W}'_{n_{l}-N+M+\bar{M}})$ contains
the point $x_{0}$. Therefore, the set
$f^{n_{l}-N+M+\bar{M}}(\widetilde{W}'_{n_{l}-N+M+\bar{M}})$ contains
$[x_{0},x_{0}+\rho]$. So there is a pull back
$W''_{n_{l}-N+M+\bar{M}}$ of $[x_{0},x_{0}+\rho]$ by
$f^{n_{l}-N+M+\bar{M}}$ that is contained in
$\widetilde{W}'_{n_{l}-N+M+\bar{M}}$, and such that
$$
f^{n_{l}-N+M+\bar{M}}(W''_{n_{l}-N+M+\bar{M}})=[x_{0},x_{0}+\rho].
$$
With this convention, we have
$$
W''_{n_{l}-N+M+\bar{M}}\subseteq\widetilde{W}'_{n_{l}-N+M+\bar{M}}\subseteq
U'_{0}\subset [x_{0},x_{0}+\rho].
$$
Put
$$
B_{0}:=[x_{0},x_{0}+\rho],~~ M':=M+\bar{M}.
$$
\end{enumerate}

In both cases, for every $l>0$, we put
$$
\phi_{l}:=\left(f^{n_{l}-N+M'}|_{W''_{n_{l}-N+M'}}\right)^{-1}.
$$
Then $(\phi_{l})_{l=1}^{+\infty}$ is an IMFS generated by $f$, that
is defined on $B_{0}$ with the time sequence
$(m_{l})_{l=0}^{+\infty}:=(n_{l}-N+M')_{l=0}^{+\infty}$. Moreover,
$n_{l+1}-n_{l}\geq M',~W''_{n_{l}-N+M'}\subset U'_{0}$, and
$f^{n_{l+1}-n_{l}-M'}(x_{n_{l+1}-N})\notin U'_{0}$. The construction
(Case 1) of $(\phi_{l})_{l=1}^{+\infty}$ is also illustrated in
Figure \ref{figure_iternary}.

\medskip

{\bf 2.} To prove that the IMFS $(\phi_{l})_{l=1}^{+\infty}$ is
free, we follow the same idea from \cite{LiRiv13a}. Let $k,k'\geq1$
be two integers and let
$$
\underline{l}:=l_{1}l_{2}\cdots
l_{k},~~\mbox{and}~~\underline{l}':=l_{1}'l_{2}'\cdots l'_{k'}
$$
be two distinct words in $\Omega^*$ such that
$m_{\underline{l}}=m_{\underline{l}'}$. Without loss of generality
we assume that $l'_{k'}\geq l_{k}+1$, then
$$
f^{m_{\underline{l}}-m_{l_k}}(\phi_{\underline{l}}(x_{0}))=\phi_{l_{k}}(x_{0})\subset
W''_{n_{l}-N+M'}\subset U_{0}'.
$$
On the other side, we have
$$
m_{l'_{k'}}-m_{l_{k}}=n_{l'_{k'}}-n_{l_{k}}\geq
n_{l_{k}+1}-n_{l_{k}}\geq M',
$$
and thus the set
\begin{alignat*}{2}
f^{m_{\underline{l}}-m_{l_{k}}}(\phi_{\underline{l}'}(x_{0}))&=f^{m_{\underline{l}'}-m_{l_{k}}}(\phi_{\underline{l}'}(x_{0}))\\
&=f^{m_{l'_{k'}}-m_{l_{k}}}(\phi_{l_{k'}'}(x_{0}))\\
&=f^{m_{l'_{k'}}-m_{l_{k}}-M'+N}\left((f^{n_{l'_{k'}}}|_{W_{n_{l'_{k'}}}})^{-1}(x_{0})\right)
\end{alignat*}
contains the point
\begin{alignat*}{2}
f^{m_{l'_{k'}}-m_{l_{k}}-M'+N}(x_{n_{l'_{k'}}})&=f^{n_{l'_{k'}}-n_{l_{k}}-M'+N}(x_{n_{l'_{k'}}})\\
&=f^{n_{l_{k}+1}-n_{l_{k}}-M'+N}(x_{n_{l_{k}+1}}).
\end{alignat*}
By the construction, this point is not contained in $U_{0}'$, so the
sets
$$
f^{m_{\underline{l}}-m_{l_k}}(\phi_{\underline{l}}(x_{0}))
~~~\mbox{and}~~~
f^{m_{\underline{l}}-m_{l_k}}(\phi_{\underline{l}'}(x_{0}))
$$
are distinct. This implies that the sets
$\phi_{\underline{l}}(x_{0})$ and $\phi_{\underline{l}'}(x_{0})$ are
different. By \eqref{equ_IMFS}, they are actually disjoint. So the
IMFS $(\phi_{l})_{l=1}^{\infty}$ is free.

\medskip

Finally, we verify \eqref{equ_sum} below. Following from
\eqref{equ_backward1} and \eqref{equ_backward2}, we have
$$
f^{i}(W''_{n_l-N+M'})\cap\Lambda(G)=\emptyset,~~\forall
i=0,1,\cdots,M'.
$$
Hence, for each $t<0$, the numbers
$$
C_{1}:=t\sup\limits_{\varsigma\in\bigcup_{i=0}^{M'}f^{i}(W''_{n_l-N+M'})}\log|f(\varsigma)|<+\infty,
$$
and
$$
C_{2}:=-t\sup_{x\in J(f)}\log|Df(x)|<+\infty,
$$
Recall that for every $l\geq1$ and $z\in \phi_{l}(B_0)$ the point
$f^{M'-N}(z)\in W_{n_l}$. Hence,
\begin{alignat*}{2}
S_{m_{l}}(-tG)(z)
&=S_{M'-N}(-tG)(z)+S_{n_{l}}(-tG)(f^{M'-N}(z))&\\
&=S_{M'}(-tG)(z)-S_{N}(-tG)(f^{M'-N}(z))+S_{n_{l}}(-tG)(f^{M'-N}(z))&\\
&\geq-M'C_{1}+NC_{2}+n_{l}\int_{J(f)}-tGd\mu-D~~~~\text{(Using \eqref{equ_good part})}\\
&=m_{l}\int_{J(f)}-tGd\mu-D-\left(M'C_{1}-NC_{2}+(M'-N)\int_{J(f)}-tGd\mu\right).&
\end{alignat*}
This implies the desired inequality \eqref{equ_sum} with
$$
C:=D+\left(M'C_{1}-NC_{2}+(M'-N)\int_{J(f)}-tGd\mu\right)<+\infty.
$$
\end{proof}

\medskip

\section{Proof of Key Lemma}\label{sec_keylemmaperiod}
In this section, we will complete the proof of Key Lemma. In the
view of Proposition \ref{lem_cohomology}, it is sufficient to show
Statement $(b)$. We will distinguish two cases, according as the
measure with positive Lyapunov exponent is supported on a periodic
orbit or not. The former case is stated as Lemma \ref{lem_period},
and the latter case is proved in the end of this section.

Analogous to Proposition \ref{prop_2new}, Lemma \ref{lem_period} is
an adaption of \cite[Prop4.1]{InoRiv12} and
\cite[Lemm4.1]{LiRiv13b}, according to the obstacles stated in
Remark \ref{rem_obstacles}. On the other hand, we remark that Lemma
\ref{lem_period} works well also for rational maps of degree at
least 2 on the Riemann sphere.



\medskip

\begin{lemm}\label{lem_period}
Let $f:J(f)\to J(f)$ be an interval map in $\sA$, and $G$ be the
upper semi-continuous potential in $\cU$ given by Proposition
\ref{lem_cohomology}, then for every hyperbolic repelling periodic
point $x_{0}\in J(f)$ of period $N$, we have
\begin{equation}\label{equ_periodic exceptional}
    \limsup_{n\to\infty}\frac{1}{n}\log\sum_{y\in
    f^{-n}(x_{0})}\exp(S_{n}(G)(y))>\frac{1}{N}S_{N}(G)(x_{0}).
\end{equation}
\end{lemm}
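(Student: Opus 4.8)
The plan is to prove that the ``tree pressure'' $\Pi:=\limsup_{n\to\infty}\frac1n\log\sum_{y\in f^{-n}(x_0)}\exp(S_n(G)(y))$ strictly exceeds $\theta:=\frac1N S_N(G)(x_0)$, by producing, along a subsequence of times, exponentially many preimages of $x_0$ each with Birkhoff sum at least $n\theta+o(n)$. Since $x_0$ is hyperbolic repelling, no point of its orbit is critical, so $f^N$ is a local diffeomorphism at $x_0$; and since $G$ is non-exceptional by Proposition~\ref{lem_cohomology}(b), Proposition~\ref{lem_postercritical} with $\Lambda=\Lambda(G)$ yields $x_0\notin\Lambda(G)$ and that $x_0$ is $\Lambda(G)$-normal. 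First I would fix a small interval $W\ni x_0$ on which $f^N$ is a diffeomorphism with $f^N(W)\supseteq\overline W$ and with $\overline{\bigcup_{j=0}^{N-1}f^j(W)}$ disjoint from the finite set $\Lambda(G)$, and let $g:=(f^N|_W)^{-1}$, a uniform contraction fixing $x_0$; this is the first generator.

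The technical core is the construction of a second, ``detour'' generator: an integer $M\ge1$ and an inverse branch $h$ of $f^M$ defined near $x_0$ with $h(\overline W)\subseteq W$, $h(\overline W)\cap g(\overline W)=\emptyset$, and such that $\{w,f(w),\dots,f^{M-1}(w)\}\cap\Lambda(G)=\emptyset$ for every $w\in h(\overline W)$. I would build it as in the proof of Proposition~\ref{prop_2new} (cf.\ \cite[Prop.~4.1]{InoRiv12}, \cite[Lem.~4.1]{LiRiv13b}): because $f$ is non-injective and topologically exact on $J(f)$, a sufficiently high forward iterate of $W$ folds over itself, so Lemmas~\ref{lem_oneside} and~\ref{lem_pullback} yield a pull-back of $W$ contained in $W$ on the side opposite $g(\overline W)$, while the $\Lambda(G)$-normality of $x_0$ is precisely what lets this pull-back be routed so that all of its intermediate images miss the poles of $G$ (cf.\ Remark~\ref{rem_obstacles}). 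This step I expect to be the main difficulty; the remainder is routine.

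Given $g_0:=g$ and $g_1:=h$, for $w=w_1\cdots w_k\in\{0,1\}^k$ put $g_w:=g_{w_1}\circ\cdots\circ g_{w_k}$, an inverse branch of $f^{n(w)}$ with $n(w)=(k-b)N+bM$ and $b=\sharp\{i:w_i=1\}$. As $g(\overline W)$ and $h(\overline W)$ are disjoint subsets of $W$, the cylinders $g_w(\overline W)$ are pairwise disjoint, so fixing $k$ and $0\le b\le k$, the $\binom kb$ words with exactly $b$ ones give $\binom kb$ distinct points $g_w(x_0)$ in $f^{-n(k,b)}(x_0)$, $n(k,b):=(k-b)N+bM$. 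Because every orbit segment involved avoids $\Lambda(G)$, $G$ is Hölder on a neighbourhood of each of the finitely many sets $f^j(\overline W)$ with $j<N$ and $f^j(h(\overline W))$ with $j<M$; telescoping $S_{n(w)}(G)(g_w(x_0))$ along its itinerary, using that $g$ contracts uniformly toward $x_0$ and summing the resulting geometric ``reconvergence'' errors (one decaying run of $g$'s after each detour block, plus a bounded oscillation across each of the $b$ detour blocks), I obtain constants $\gamma\in\R$ and $C_0>0$ independent of $k,b,w$ with $S_{n(w)}(G)(g_w(x_0))\ge n(w)\theta+b(w)\gamma-C_0$.

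It follows that $\sum_{y\in f^{-n(k,b)}(x_0)}\exp(S_{n(k,b)}(G)(y))\ge\binom kb\exp(n(k,b)\theta+b\gamma-C_0)$, so $\frac1{n(k,b)}\log$ of the left-hand side is $\ge\theta+\frac1{n(k,b)}(\log\binom kb+b\gamma-C_0)$. Taking $b=\lfloor\varepsilon k\rfloor$ and using $n(k,b)\le k\max\{N,M\}$ and Stirling's formula, $\log\binom k{\lfloor\varepsilon k\rfloor}\ge\varepsilon k(-\log\varepsilon)+O(\varepsilon^2 k)$, whereas $b\gamma-C_0=\varepsilon k\gamma+O(1)$; since $-\log\varepsilon\to+\infty$ as $\varepsilon\to0^+$ while $\gamma$ stays fixed, one fixes $\varepsilon>0$ small so that the bracketed quantity is bounded below by a positive multiple of $k$ for all large $k$. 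Letting $k\to\infty$ then gives $\Pi>\theta=\frac1N S_N(G)(x_0)$, which is \eqref{equ_periodic exceptional}. The same construction applies verbatim with $G$ replaced by $-tG$ for any $t<0$ (the distortion constants merely rescale by $|t|$), which is the form that enters the proof of the Key Lemma, eq.~\eqref{equ_hyperbolicity}, when the ergodic measure is supported on the orbit of $x_0$.
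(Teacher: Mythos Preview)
Your proof is correct and follows the same overall architecture as the paper's: build a two-branch iterated function system at $x_{0}$, one branch being the linearizing inverse of the periodic dynamics and the other a ``detour'' branch obtained via topological exactness and the $\Lambda(G)$-normality of $x_{0}$, then exploit the resulting exponential multiplicity of preimages against a distortion bound linear in the number of detours. The substantive differences are two. First, the paper arranges both branches to have the \emph{same} return time (by composing the detour with further powers of the linearizing inverse $\phi$, and working with $f^{2N}$ rather than $f^{N}$ so that $\phi$ is orientation-preserving and the one-sided argument via Lemma~\ref{lem_oneside} goes through cleanly); this yields a full shift on two symbols with a single time scale and a clean generating-function comparison $\Xi\succ\Upsilon=\sum_{i\ge1}\Phi^{i}$, from which the strict inequality on the radius of convergence drops out immediately. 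You instead keep the two return times $N$ and $M$ distinct, stratify words by the number $b$ of detours, and close with a Stirling/entropy estimate on $\binom{k}{b}$ with $b=\lfloor\varepsilon k\rfloor$; this is more elementary and equally valid, at the cost of a slightly longer endgame. Second, for $h$ to be a genuine single-valued inverse branch you need the intermediate images of $h(\overline W)$ to avoid all of $\mbox{Crit}'(f)$, not only $\Lambda(G)$; the paper secures this explicitly (conditions of type ``every pre-image of $a$ is not in $\mbox{Crit}'(f)$'' and $f^{j}(B(z',\epsilon))\cap\mbox{Crit}'(f)=\emptyset$), whereas your sketch mentions only the poles. This is a cosmetic omission---the same finiteness-and-normality manoeuvre handles it---but worth stating, and the paper's use of $2N$ in place of $N$ is the clean way to keep the one-sided construction from tangling with orientation.
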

\begin{proof}[Proof of Lemma \ref{lem_period}]
Analogous to that in \cite[Lem4.1]{LiRiv13a}, we split the proof
into 2 parts. In part 1, we construct the induced map, and in part 2
we show \eqref{equ_periodic exceptional} for the induced map.

\medskip

{\bf 1.} Fix a repelling periodic point $x_{0}\in~J(f)$ of period
$N$. Since $|(f^{N})'(x_{0})|>1$, there is $\rho>0$ and a local
inverse $\phi$ of $f^{2N}$ defined on $B(x_{0},\rho)$ with
$\phi(x_{0})=x_{0}$. Note that $f^{2N}\circ \phi$ is the identity
map on $B(x_{0},\rho)$. Hence $\phi'(x_{0})>0$, thus $\phi$ is
increasing on $B(x_{0},\rho)$ and $f^{2N}$ is also increasing on
$\phi(B(x_{0},\rho))$. Since $x_{0}\in J(f)$, changing orientation
and reducing $\rho$ if necessarily, we assume that
$(x_{0},x_{0}+\rho/2)$ intersects with $J(f)$. On the other hand,
Proposition \ref{lem_cohomology} implies that for every non-empty
finite forward invariant subset $\Sigma\subset J(f)$, we have
$f^{-1}(\Sigma)\backslash\Sigma\nsubseteq\Lambda(G)$. Hence, by
Proposition \ref{lem_postercritical}, $x_{0}$ itself is
$\Lambda(G)$-normal, and is not in $\Lambda(G)$. This implies:
\begin{enumerate}
  \item there exist an integer $\widetilde{k}$, a point
  $a\in~J(f)$, and an open one side subinterval (say
  $(a,a+\widetilde{\rho}/2)$) such that
  $f^{2N\widetilde{k}}(a)=x_{0},f^{2N\widetilde{k}}(a,a+\widetilde{\rho}/2)=(x_{0},x_{0}+\rho/2)$,
  and
  \begin{equation}\label{equ_nonsingular1}
f^{2Nj}(a)\notin\Lambda(G),~f^{2Nj}(a,a+\widetilde{\rho}/2)\cap\Lambda(G)=\emptyset,~~\forall
j=0,1,\cdots,\widetilde{k};
  \end{equation}
  \item every pre-image of $a$ is not in $\mbox{Crit}'(f)$.
\end{enumerate}
Note that $f$ is topological exact on $J(f)$. Using Lemma
\ref{lem_oneside} for the point $a$, it follows that there exist an
integer $\hat{k}\leq1$ and a point $z'\in (x_{0},x_{0}+\rho/2)$ such
that $f^{2N\hat{k}}(z')=x_{0}$, and such that for every
$\epsilon>0$, the set $f^{2N\hat{k}}(B(z',\epsilon))$ intersects
$(a,a+\widetilde{\rho}/2)$. Together with statement $(b)$, we can
fix $\epsilon\in(0,|z'-x_{0}|)$ such that
$f^{2N\hat{k}}(B(z',\epsilon))\subset B(a,\widetilde{\rho}/2)$, and
such that
\begin{equation}\label{equ_nonsingluar2}
f^{2Nj}(B(z',\epsilon))\cap\mbox{Crit}'(f)=\emptyset,~\forall
j=0,1,\cdots,\hat{k}.
\end{equation}
Note also the closure of $B(z',\epsilon)$ is contained in
$(x_{0},x_{0}+\rho/2)$.

Let $k':=\widetilde{k}+\hat{k}$, and $W$ be the pull back of
$f^{2Nk'}(B(z',\epsilon))\cap[x_{0},x_{0}+\rho/2)$ by $f^{2Nk'}$
containing $z'$. Put $U'_{0}:=\phi^{k'}(f^{2Nk'}(W))$. Since both
$f^{2Nk'}$ and $\phi^{k'}$ are continuous, reducing $\epsilon$ if
necessarily, $U'_{0}$ is disjoint from $\overline{W}$. By our choice
of $\phi$, it follows that
$$
W\subseteq B(z',\epsilon)\subset
(x_{0},x_{0}+\rho),~~\mbox{and}~~x_{0}\in f^{2Nk'}(W)\subseteq
[x_{0},x_{0}+\rho/2).
$$
Moreover, based on the hypothesis that $x_{0}$ is a repelling
periodic point and the definition of $\rho$, for every open set
$U\subseteq [x_{0},x_{0}+\rho/2)$, we have
$$
\lim\limits_{k\to\infty}\mbox{diam}(\phi^{k}(U))=0,~~\mbox{and}~~\lim\limits_{k\to\infty}\mbox{dist}(\phi^{k}(U),x_{0})=0.
$$
Thus, there is an integer $k_{1}\geq0$, such that
$$
U_{1}:=\phi^{k_{1}}(W)\subset f^{2Nk'}(W),
$$
and
\begin{equation}\label{equ_diameter}
\mbox{diam}(\phi^{k_{1}+k'}(f^{2Nk'}(W)))<\mbox{diam}(f^{2Nk'}(W)).
\end{equation}
Put $k_{0}:=k_{1}+k'$, and $U_{0}:=\phi^{k_{1}}(U_{0})$. Then we
have
$$
k_{0}\geq1,~U_{0}\cap U_{1}=\emptyset,~\mbox{and}~U_{1}\subset
f^{2Nk'}(W).
$$
By \eqref{equ_diameter}, and the fact that $f^{2Nk'}(W)$ contains
$x_{0}$, we have
$$
U_{0}=\phi^{k_{1}}(U_{0}')=\phi^{k_{0}}(2Nk'(W))\subset f^{2Nk'}(W).
$$
We also note that
$$
f^{2Nk_{0}}(U_{0})=f^{2Nk'}(W)=f^{2Nk_{0}}(U_{0}).
$$
Put
$$
U:=U_{0}\cup U_{1},~~\hat{f}:=f^{2Nk_{0}}|_{U}.
$$

\medskip

{\bf2.} We will prove \eqref{equ_periodic exceptional} in this part.
Put $\hat{G}:=\frac{1}{2Nk_{0}}S_{2Nk_{0}}(G)$ and for every integer
$m\geq1,$ put
$$
\hat{S}_{m}(\hat{G}):=\hat{G}+\hat{G}\circ
\hat{f}+\cdots+\hat{G}\circ\hat{f}^{m-1}.
$$
Note that to prove the Lemma, it is sufficient to show that
$$
\limsup\limits_{m\to\infty}\frac{1}{m}\log\sum_{y\in\hat{f}^{-m}(x_{0})}\exp(\hat{S}_{m}(\hat{G})(y))>\hat{G}(y).
$$
This is equivalent to show that the convergent radius of the series
\begin{equation}\label{equ_series2}
\Xi(s):=\sum_{n=0}^{+\infty}\left(\sum_{z\in\hat{f}^{n}(x_{0})}\exp\left(\hat{S}_{n}(\hat{G}(z))\right)\right)s^{n}
\end{equation}
is strictly less than $\exp(-\hat{G}(x_{0}))$.

\medskip

The proof of this fact above is analogous to the proof of
\cite[Prop4.1]{InoRiv12}. We include it for completeness.

Put $\hat{K}:=\bigcap_{i=0}^{+\infty}\hat{f}^{-i}(U)$, and consider
the itinerary map $\iota:\hat{K}\to\{0,1\}^{\N}$ defined so that for
every $i\in\{1,2\cdots\}$, the point $\hat{f}^{i}(z)$ is in
$U_{\iota(z)_{i}}$. Since $\hat{f}$ maps each of the sets
$U_{0},U_{1}$ onto $f^{Nk'}(W)$, and both $U_{0},U_{1}$ are
contained in this set, for every integer $k\geq0$, and every
sequence $a_{0},a_{1},\cdots,a_{k}$ of elements of $\{0,1\}$, there
is a point of $\hat{f}^{-k+1}(x_{0})$ in the set
$$
\hat{K}(a_{0},a_{1},\cdots,a_{k}):=\{z\in\hat{K}:~\mbox{for
every}~i\in\{0,1,\cdots,k\},~~\mbox{we have}~\iota(z)_{i}=a_{i}\}.
$$
Based on Statements $(a),(b)$ and our choice of $\phi,U_{0}$, there
is a constant $\hat{C}>0$ such that for every integer $k\geq1$ and
every point $z\in\hat{K}(\underbrace{0,\cdots,0}_{k})$, we have
\begin{equation}\label{equ_universal constant1}
    \hat{S}_{k}(\hat{G})(z)\geq
    k\hat{G}(x_{0})-\hat{C}.
\end{equation}
Meanwhile, due to \eqref{equ_nonsingular1} and
\eqref{equ_nonsingluar2},there is a sufficiently large constant
$\hat{C}$ such that
\begin{equation}\label{equ_universal constant2}
\hat{G}(z)\geq \hat{G}(x_{0})-\hat{C},~~\forall z\in U.
\end{equation}
Next we show that for every $k\geq 0$, every sequence
$a_{0},a_{1},\cdots a_{k}$ of elements of $\{0,1\}$ with $a_{0}=1$,
and every point $x\in\hat{K}(a_{0}a_{1}\cdots a_{k})$, then
\begin{equation}\label{equ_inequality}
    \hat{S}_{k+1}(\hat{G})(x)\geq(k+1)\hat{G}(x_{0})-2(a_{0}+a_{1}+\cdots+a_{k})\hat{C}.
\end{equation}
In fact, put $l:=a_{0}+\cdots+a_{k},~i_{l+1}:=k+1$, and also put
$0=i_{1}<i_{2}<\cdots i_{l}\leq k$ be all integers such that
$a_{i}=1$. It follows from \eqref{equ_universal constant1} and
\eqref{equ_universal constant2} that
$$
\hat{S}_{i_{j+1}-i_{j}}(\hat{G})(\hat{f}^{i_{j}}(x))\leq
(i_{j+1}-i_{j})(\hat{G})(x_{0})-2\hat{C},~~\forall
j\in\{1,\cdots,l\}.
$$
Summing over $j\in\{1,2,\cdots,l\}$, and we obtain desired
inequality \eqref{equ_inequality}. Thus, if we put
$$
\Phi(s):=\sum_{k=1}^{\infty}\exp(k\hat{G}(x_{0})-2\hat{C})s^{k},
$$
then each of the coefficients of
$$
\Upsilon(s):=\Phi(s)+\Phi(s)^{2}+\cdots
$$
is less than or equal to the corresponding coefficients of $\Xi$,
and therefore the radius of convergence of $\Xi$ is less than or
equal to that of $\Upsilon$. Since $\Phi(s)\to\infty$ as
$s\to\exp(-\hat{G}(x_{0}))^{-}$, there is an
$s_{0}\in(0,\exp(-\hat{G}(x_{0})))$ such that $\Phi(s_{0})>1$. It
follows that the radius of convergence of $\Upsilon$, and hence that
of $\Xi$, is less than or equal to $s_{0}$, and therefore it is
strictly less than $\exp(-\hat{G}(x_{0}))$. The proof of this lemma
is thus completed.
\end{proof}

\medskip

Once Lemma \ref{lem_period}, and Proposition \ref{prop_2new} are
proved, we follow the same strategy as in \cite{LiRiv13a} and
\cite{InoRiv12} to deduce Key Lemma. We include the proof for
completeness.

\subsection{Proof of Key Lemma}
\begin{proof}[Proof of Key Lemma]
If an ergodic measure $\mu\in \cM(f,J(f))$ is supported on a
repelling periodic point, the desired inequality follows from Lemma
\ref{lem_period}.

Otherwise, the ergodic measure $\mu\in\cM(f,J(f))$ is not supported
on a periodic orbit. By the topological exactness on $J(f)$, and the
ergodicity of $\mu$, it follows that $\mu$ is non-atomic and fully
supported on $J(f)$. By Proposition \ref{prop_2new}, for each $t<0$,
there exist a constant $C>0$, a connected and compact subset $B_{0}$
of $J(f)$, and a free IMFS $(\phi_{k})_{k=1}^{+\infty}$ generated by
$f$ with a time sequence $(m_{k})_{k=1}^{+\infty}$ that is defined
on $B_{0},$ such that for every $k\geq1$ and every point
$y\in\phi_{k}(B_{0})$, we have
\begin{equation}\label{equ_sum2}
S_{m_{k}}(-tG)((y))\geq m_{k}\int -tGd\mu-C.
\end{equation}
Since the IMFS $(\phi_{l})_{l=1}^{+\infty}$ is free, there is a
point $x_{0}\in B_{0}$ such that for every $\underline{l},
\underline{l}'\in \Omega^{*}$ with
$m_{\underline{l}}=m_{\underline{l}'}$, the sets
$\phi_{\underline{l}}(x_{0})$ and $\phi_{\underline{l}'}(x_{0})$ are
disjoint. Moreover, for every integer $k\geq1,$ every
$\underline{l}:=l_{1}\cdots l_{k}\in\Omega^{*}$, every
$y_{0}\in\phi_{\underline{l}}(x_{0})$, and every $j\in
\{1,\cdots,k-1\},$ the point
$$
y_{j}:=f^{m_{l_{1}}+m_{l_{2}}+\cdots+m_{l_{j}}}(y_{0})
$$
is in $\phi_{m_{j+1}}(B_{0})$. Hence, followed by \eqref{equ_sum2},
given a $t<0$, we have
\begin{align*}
&
S_{m_{\underline{l}}}(-tG)(y_{0})=S_{m_{l_1}}(-tG)(y_{0})+S_{m_{l_2}}(-tG)(y_{1})+\cdots+S_{m_{l_{k}}}(-tG)(y_{k})\\
&\geq \sum_{i=1}^{k}\left(m_{l_{i}}\int
-tGd\mu-C\right)=m_{\underline{l}}\int -tGd\mu-kC.
\end{align*}
This implies that for every $\underline{l}\in\Omega^{*}$, and every
$y_{0}\in\phi_{\underline{l}}(x_{0})$, we have
\begin{equation}\label{equ_series}
  \exp(S_{m_{\underline{l}}}(-tG)(y_{0}))\geq\exp(m_{\underline{l}}\int -tGd\mu)\exp(-|\underline{l}|C).
\end{equation}
In addition, we put
$$
\Xi_{n}:=\bigcup_{\underline{l}\in\Omega^{*},m_{\underline{l}}=n}\phi_{\underline{l}}(x_{0}),~~\forall
n\geq1,
$$
then the radius of convergence of the series
$$
\Xi(s):=\sum_{n=1}^{\infty}\left(\sum_{y\in\Xi_{n}}\exp(S_{n}(-tG))(y)\right)s^{n},
$$
is given by
$$
R:=\left(\limsup\limits_{n\to\infty}\left(\sum_{y\in\Xi_{n}}\exp(S_{n}(-tG))(y)\right)^{1/n}\right)^{-1},
$$
and in particular,
$$
\exp\left(-\limsup_{n\to\infty}\frac{1}{n}\log\sum_{y\in
    f^{-n}(x_{0})}\exp(S_{n}-tG(y))\right)\leq R.
$$
Hence, to complete the proof of Key Lemma, it is sufficient to prove
$R<\exp(\int tGd\mu)$. Denote
$$
\Phi(s):=\sum_{l=1}^{\infty}\exp(-C)\exp\left(m_{l}\int
-tGd\mu\right)s^{m_{l}}
$$
Followed by \eqref{equ_series}, and the fact that the IMFS
$(\phi_{k})_{k=1}^{+\infty}$ is free, each of the coefficients of
the series
$$
\Upsilon(s):=\sum_{i=1}^{+\infty}(\Phi(s))^{i}=\sum_{n=1}^{+\infty}
\left(\sum_{\underline{l}\in\Omega^{*},m_{\underline{l}}=n}\exp\left(m_{\underline{l}}\int
-tGd\mu\right)\exp(-|\underline{l}|C)\right)s^{n},
$$
is less or equal to the corresponding coefficient of series $\Xi.$
So the radius of convergence of $\Xi$ is less or equal to that of
$\Upsilon$. Note also that
$$
\lim\limits_{s\to\exp(-\int -tGd\mu)^-}\Phi(s)=+\infty.
$$
Hence, there is an $s_{0}\in(0,\exp(-\int\psi d\mu))$ such that
$\Phi(s_{0})\leq1$ and thus we have
$$
R\leq s_{0}<\exp(\int tG d\mu),
$$
which implies \eqref{equ_hyperbolicity} and completes the proof of
Key Lemma.
\end{proof}

\bigskip

\section{Hyperbolicity and the existence of a non-atomic conformal
measure}\label{sec_hyperbolic and conformal}

In this section, we will use the Key lemma to prove the
Hyperbolicity and the existence of a conformal measure for the the
new potential $-tG,$ with $t<0$. Based on this, it allows us to use
Keller's results on showing the absence of a phase transition of the
hidden pressure function in next Section. We begin with the
definition of the terminology ``conformal measure.''

\medskip

Let $f:J(f)\to J(f)$ be a continuous interval map. Given a Borel
measurable function $g:J(f)\to[0,+\infty)$, a Borel probability
measure $\mu$ on $J(f)$ is \emph{g-conformal} for $f$, if for each
Borel set $A\subset J(f)$ on which $f$ is injective, we have
$$
\mu(f(A))=\int_{A}gd\mu.
$$

\medskip

The main result in this section is stated as follows.

\begin{prop}\label{prop_hyperbolic and conformal measure}
Let $f:J(f)\to J(f)$ be an interval map in $\sA$, and let $G:J(f)\to
\R\cup\{-\infty\}$ be the upper semi-continuous potential in $\cU$
given by Proposition \ref{lem_cohomology}. Then for each integer
$t<0$,
\begin{enumerate}
  \item $-tG$ is hyperbolic;
  \item there is a non-atomic $\exp(P(f,-tG)+tG)$-conformal measure for
$f,$ with the support equals to $J(f)$.
\end{enumerate}
\end{prop}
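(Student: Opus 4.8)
The plan is to obtain part (a) from the Key Lemma together with the elementary upper bound on preimage sums, and then to build the conformal measure in part (b) by a Patterson--Sullivan argument, using the hyperbolicity just proved and the non-exceptionality of $G$ from Proposition \ref{lem_cohomology} to rule out atoms. For \textbf{part (a)}, by \eqref{equ_hyperbolicityequilivalent} it suffices to show $\sup_{\nu\in\cM(f,J(f))}\int -tG\,d\nu<P(f,-tG)$. Since $-tG$ is upper semi-continuous and bounded above, $\nu\mapsto\int -tG\,d\nu$ is affine and upper semi-continuous on the compact convex set $\cM(f,J(f))$, so it attains its maximum at an ergodic measure $\nu^{\ast}$. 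If $\chi_{\nu^{\ast}}\le0$ then $\nu^{\ast}$ is non-atomic (all periodic orbits are repelling), so by the identity $\int -tG\,d\nu^{\ast}=-t\chi_{\nu^{\ast}}$ noted inside the proof of Proposition \ref{lem_cohomology} we get $\int -tG\,d\nu^{\ast}\le0<P(f,-tG)$, the strict inequality since $P(f,-tG)>0$ (test the variational principle against the uniform measure on a repelling periodic orbit disjoint from the finite set $\Sigma_{\max}$). If $\chi_{\nu^{\ast}}>0$, apply Statement $(b)$ of the Key Lemma with $\mu=\nu^{\ast}$ and pick any $x_{0}$ in the resulting full-measure set:
\begin{equation*}
\int -tG\,d\nu^{\ast}<\limsup_{n\to\infty}\tfrac1n\log\sum_{y\in f^{-n}(x_{0})}\exp\big(S_{n}(-tG)(y)\big)\le P(f,-tG),
\end{equation*}
the last inequality being the standard bound on preimage sums (approximate $-tG$ from above by continuous potentials and apply the variational principle with topological exactness). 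In either case the supremum is $<P(f,-tG)$, so $-tG$ is hyperbolic.

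For \textbf{part (b)}, fix an equilibrium state $\mu_{0}$ for $-tG$, which exists because $\nu\mapsto h_{\nu}(f)$ is upper semi-continuous for $f\in\sA$ by \cite[Lem2.3]{BK98}; hyperbolicity rules out $\mu_{0}$ being carried by a periodic orbit, so $\mu_{0}$ is non-atomic, and then $h_{\mu_{0}}=P(f,-tG)-(-t)\chi_{\mu_{0}}>0$ gives $\chi_{\mu_{0}}>0$ by Ruelle's inequality. I will use the identity $\lim_{n}\tfrac1n\log\sum_{y\in f^{-n}(x)}\exp(S_{n}(-tG)(y))=P(f,-tG)$ for $\mu_{0}$-a.e.\ $x$ — the upper bound as above, the matching lower bound by a Katok-type count of preimages carrying the entropy of $\mu_{0}$, as in \cite{PRLS04,LiRiv13a} — to pick a non-preperiodic base point $x_{0}\in J(f)\setminus\Lambda(G)$ realising it. Then the Poincar\'e series $\sum_{n\ge1}\big(\sum_{y\in f^{-n}(x_{0})}\exp(S_{n}(-tG)(y))\big)s^{n}$ has radius of convergence $\exp(-P(f,-tG))$, and the usual Patterson--Sullivan procedure (inserting a slowly varying factor if this series converges at the critical value) produces, as a weak$^{\ast}$ accumulation point of the normalised atomic measures $\propto\sum_{n}\sum_{y\in f^{-n}(x_{0})}\exp(S_{n}(-tG)(y))s^{n}\delta_{y}$ with $s\uparrow\exp(-P(f,-tG))$, a Borel probability measure $m$ on $J(f)$; the cocycle relation $S_{n}(-tG)(y)=-tG(y)+S_{n-1}(-tG)(f(y))$ makes this limit $\exp(P(f,-tG)+tG)$-conformal, exactly as in \cite{LiRiv13a}.

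It remains to see \textbf{$m$ is non-atomic of full support}. A conformal measure for $f$ is ergodic by the usual density-point argument resting on topological exactness and the conformal relation, so $m$ is either non-atomic or supported on a single periodic orbit $\mathcal{O}=\{a,\dots,f^{N-1}(a)\}$. In the latter case, since $\exp(P(f,-tG)+tG)$ is everywhere strictly positive, multiplying the conformality relation around $\mathcal{O}$ gives $\prod_{j=0}^{N-1}\exp(P(f,-tG)+tG(f^{j}(a)))=1$, i.e.\ $P(f,-tG)=-t\cdot\tfrac1N S_{N}(G)(a)=\int -tG\,d\nu_{\mathcal{O}}$ with $\nu_{\mathcal{O}}$ the uniform measure on $\mathcal{O}$, contradicting the hyperbolicity from part (a); the non-exceptionality of $G$ from Proposition \ref{lem_cohomology} is what prevents the atomic part of $m$ from concentrating on one of the exceptional-type configurations a priori permitted by the conformal relation. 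Hence $m$ is non-atomic and $m(\Lambda(G))=0$. Finally, given any non-empty open $U\subseteq J(f)$, topological exactness gives $N$ with $f^{N}(U)=J(f)$, hence finitely many subintervals $A_{i}\subseteq U$ on which $f^{N}$ is injective with $\bigcup_{i}f^{N}(A_{i})=J(f)$; then $1=m(J(f))\le\sum_{i}m(f^{N}(A_{i}))=\sum_{i}\int_{A_{i}}\exp(NP(f,-tG)+tS_{N}(G))\,dm$, and since the integrand is everywhere strictly positive some $m(A_{i})>0$, whence $m(U)>0$ and $\operatorname{supp}m=J(f)$.

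The main difficulty I anticipate is the exact identity for the preimage sums at a generic base point: its lower bound needs a Katok-type count adapted to the singular potential $-tG$ (whose $-\infty$ values on $\Lambda(G)$ must be handled when pulling back along orbits), together with the fact — also provable from hyperbolicity — that near-maximising measures have positive Lyapunov exponent. The second delicate point is the ``atom $\Rightarrow$ cycle'' reduction in the non-atomicity argument, where the ergodicity of the conformal measure and the non-exceptionality of $G$ are used essentially; everything else is a routine adaptation of the Patterson--Sullivan machinery of \cite{LiRiv13a,LiRiv13b}.
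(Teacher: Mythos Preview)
Your argument for part~(a) is essentially the paper's argument, only organized differently. The paper first proves (as a Claim) that every equilibrium state of $-tG$ has strictly positive entropy, by assuming $h_\mu=0$, invoking Lemma~\ref{lem_positive on the Lyap} to get $\chi_\mu>0$, and then combining the Key Lemma with the tree-pressure identity to obtain a contradiction; hyperbolicity follows via Lemma~\ref{lem_uppersemi}. Your version attacks $\sup_\nu\int(-tG)\,d\nu$ directly and splits on the sign of $\chi_{\nu^\ast}$, which is perfectly fine and in one respect cleaner: you only use the \emph{upper} bound $\limsup_n\tfrac1n\log\sum_{y\in f^{-n}(x_0)}\exp S_n(-tG)(y)\le P(f,-tG)$, whereas the paper quotes Lemma~\ref{lem_tree pressure} (which has hyperbolicity among its hypotheses) at a point where hyperbolicity is not yet established. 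In the paper's chain of inequalities only this upper bound is actually needed, so the two arguments coincide once that is observed.

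For part~(b) your outline has the right shape, but two steps are not in order. First, once (a) is proved you do not need a Katok-type preimage count: Lemma~\ref{lem_tree pressure} now applies to the hyperbolic, non-exceptional potential $-tG$ and gives $P_{x_0}=P(f,-tG)$ at every periodic $x_0$ and every non-periodic $x_0\notin\bigcup_i f^i(\Lambda(G))$; the paper then simply feeds this into Lemma~\ref{lem_conformal measure} (the Patterson--Sullivan lemma quoted from \cite{LiRiv13b}), which already delivers a \emph{non-atomic} conformal measure with full support. Second, your non-atomicity argument is where there is a genuine gap: the assertion ``a conformal measure is ergodic, hence if atomic it lives on a single periodic orbit'' is not justified --- conformal measures are not $f$-invariant, and the density-point argument you allude to establishes uniqueness/ergodicity only \emph{after} non-atomicity and the existence of an invariant density are known. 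The direct way to rule out atoms, which is what underlies Lemma~\ref{lem_conformal measure}, is this: if $m(\{a\})>0$ then the conformal relation gives $m(\{f^N(a)\})=\exp\!\big(NP(f,-tG)-S_N(-tG)(a)\big)m(\{a\})>m(\{a\})$ by hyperbolicity, so the masses along the forward $f^N$-orbit strictly increase; since $m$ is a probability, the orbit is pre-periodic, and on the periodic cycle the same relation forces $P(f,-tG)=\tfrac1p S_p(-tG)(a)$, contradicting hyperbolicity. Non-exceptionality of $G$ plays no separate role at this stage --- it was already consumed in the Key Lemma to produce hyperbolicity.
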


The proof of Proposition \ref{prop_hyperbolic and conformal measure}
will be at the end of this section and requires a few Lemmas.

\begin{lemm}[Positive Lyapunov exponent]\label{lem_positive on the Lyap}
Let $f:J(f)\to J(f)$ be an interval map in $\sA$, let
$G:J(f)\to\R\cup\{-\infty\}$ be the upper semi-continuous potential
in $\cU$ given by Proposition \ref{lem_cohomology}, and measure
$\mu\in\cM(f,J(f))$ be an equilibrium state of $-tG$ with $t<0$,
then $\chi_{\mu}>0$.
\end{lemm}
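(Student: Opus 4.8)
The plan is to reduce to ergodic equilibrium states, dispose of the atomic case by repulsivity of periodic orbits, and for the non-atomic case combine the cohomology identity of Proposition~\ref{lem_cohomology} with the elementary bound $h_{\nu}(f)\le h_{top}(f)$ and one standard fact about positive-entropy measures. The Key Lemma of the previous sections is not needed here.

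First I would pass to the ergodic decomposition $\mu=\int\mu_{\omega}\,dm(\omega)$. Since $\cF_{\mu}(f,-tG)=P(f,-tG)$ is finite, the free energy is affine along the decomposition (entropy is affine by Jacobs' theorem, the integral by Fubini), so $m$-almost every component $\mu_{\omega}$ is again an equilibrium state of $-tG$; and because $\log|Df|$ is bounded above on $J(f)$, positivity of $\chi_{\mu}$ follows once $\chi_{\mu_{\omega}}>0$ for $m$-a.e.\ $\omega$. Thus I may assume $\mu$ itself is ergodic. If $\mu$ is supported on a periodic orbit, then $\chi_{\mu}>0$ because every periodic point of a map in $\sA$ is hyperbolic repelling, so I am left with the case $\mu\in\widetilde{\cM}(f,J(f))$ non-atomic.

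For such $\mu$, the equilibrium property forces $\cF_{\mu}(f,-tG)=P(f,-tG)\ge\widetilde{P}(f,-tG)\ge\cF_{\mu}(f,-tG)$, hence $\cF_{\mu}(f,-tG)=\widetilde{P}(f,-tG)=\widetilde{P}(t)$ by \eqref{equ_cohomologyintegral}. Moreover, as in the proof of Proposition~\ref{lem_cohomology}, $h$ is finite off the finite set $\Sigma_{\max}$ and $\mu$ has no atoms, so $\int -tG\,d\mu=\int -t\log|Df|\,d\mu=(-t)\chi_{\mu}$, and therefore
$$
h_{\mu}(f)+(-t)\chi_{\mu}=\cF_{\mu}(f,-tG)=\widetilde{P}(t).
$$
The next step is to observe $\widetilde{P}(t)>h_{top}(f)$ for every $t<0$. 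Since $f$ is non-injective and topologically exact on the perfect set $J(f)$, we have $h_{top}(f)>0$; by \cite[Lem2.3]{BK98} the entropy map is upper semi-continuous, so there is an ergodic measure of maximal entropy $\nu_{0}$ (pick an ergodic component of full entropy if necessary). As $h_{\nu_{0}}(f)=h_{top}(f)>0$, the measure $\nu_{0}$ is non-atomic and, by the standard fact that an ergodic measure of positive entropy of a $C^{1+}$ interval map with non-flat critical points has positive Lyapunov exponent, $\chi_{\nu_{0}}>0$ (and finite, $\log|Df|$ being bounded above). Hence
$$
\widetilde{P}(t)\ \ge\ \cF_{\nu_{0}}(f,-t\log|Df|)\ =\ h_{top}(f)+(-t)\chi_{\nu_{0}}\ >\ h_{top}(f).
$$

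Finally I would conclude by contradiction. Suppose $\chi_{\mu}\le0$. Then $(-t)\chi_{\mu}\le0$ (as $-t>0$), so from the identity $h_{\mu}(f)+(-t)\chi_{\mu}=\widetilde{P}(t)$ established above we get $\widetilde{P}(t)\le h_{\mu}(f)\le h_{top}(f)$, contradicting $\widetilde{P}(t)>h_{top}(f)$. Therefore $\chi_{\mu}>0$, which by the reduction proves the lemma. I expect the only genuinely non-elementary ingredient to be the cited fact that positive-entropy ergodic measures of maps in $\sA$ have positive Lyapunov exponent (used only for the maximal-entropy measure $\nu_{0}$); everything else is bookkeeping, and the upper semi-continuity of the entropy together with the boundedness of $\log|Df|$ from above take care of the finiteness issues that arise.
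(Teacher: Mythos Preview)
Your argument is correct and uses the same ingredients as the paper (the measure of maximal entropy, the cohomology identity \eqref{equ_cohomologyintegral}, and Ruelle's inequality), but the organization differs in one noteworthy respect. The paper only proves the weaker bound $\widetilde{P}(t)>0$ for $t<0$ (using just $\chi_{\nu_0}\ge 0$, via \cite[Prop.~A.1]{Riv13a}), and then does a case split on the non-atomic equilibrium state $\mu$: either $h_{\mu}>0$, and Ruelle's inequality applied to $\mu$ gives $\chi_{\mu}>0$; or $h_{\mu}=0$, and positivity of the free energy forces $\int G\,d\mu=\chi_{\mu}>0$ directly. You instead apply Ruelle's inequality upstream, to the maximal-entropy measure $\nu_{0}$, to obtain the sharper bound $\widetilde{P}(t)>h_{top}(f)$; this lets you run a one-line contradiction without the case analysis. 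Both routes invoke Ruelle's inequality exactly once, just on different measures, so neither is materially simpler---but your version packages the same content slightly more cleanly.
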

\begin{proof}
The proof is divided by two parts. In part 1, we show that
$\widetilde{P}(t)>0,~~\forall t<0$. In part 2, we use
\eqref{equ_cohomologyintegral} to show that each equilibrium state
of $-tG,$ with $t<0$ has positive Lyapunov exponent.

{\bf 1.} Since $f$ is topologically exact on $J(f)$, there are two
disjoint closed subset $A_{1},A_{2}\subset J(f)$ with non-empty
interior such that $f^{N_{1}}(A_{1})=f^{N}(A_{2})=J(f)$. Let
$N:=\max\{N_{1},N_{2}\}$, and thus $f^{N}$ has a 2-horse shoe
$\{A_{1},A_{2}\}$. Therefore, there is an $f^{N}$-invariant compact
set $K\subset J(f)$, such that $f|_{K}$ is semi-conjugate to the
full shifts on 2-symbols. Hence
\begin{equation}\label{equ_positiveentropy}
h_{top}(f)=\frac{1}{N}h_{top}(f^{N})\geq\frac{1}{N}\log2>0.
\end{equation}
On the other side, Since $f$ is continuously differentiable, and
every periodic point is hyperbolic repelling, it follows from
\cite[Proposition A.1]{Riv13a} that every the measure
$\eta\in\cM(f,J(f))$ has $\chi_{\eta}\geq0$. Put a measure
$\nu\in\cM(f,J(f))$ be a maximum entropy measure, i.e.,
$h_{\nu}(f)=h_{top}(f)$, then \eqref{equ_positiveentropy} yields
that constant 0 function satisfies the bounded range condition.
Followed by the arguments in \cite{HK82}(see also \cite[Theo
B]{LiRiv13b}), such maximum entropy measure $\nu$ is unique and
non-atomic, and thus
\begin{equation}\label{equ_positvehiddenpressure}
\widetilde{P}(t)\geq
h_{\nu}-t\chi_{\nu}=h_{top}(f)-t\chi_{\nu}\geq\frac{1}{N}\log2>0,~~\forall
t\leq 0.
\end{equation}

\medskip

{\bf 2.} Put a measure $\mu\in\cM(f,J(f))$ be an equilibrium state
of the new potential $G$. It is sufficient to consider ergodic
$\mu$, the general case will be followed by ergodic decomposition.
We distinguish two cases.
\begin{itemize}
  \item $\mu$ is atomic. Then the topological exactness of $f$ and
ergodicity of $\mu$ yield that $\mu$ must be supported on a periodic
orbit. On the other hand, note that every periodic orbit are
hyperbolic repelling. So we have $\chi_{\mu}>0.$
  \item $\mu$ is non-atomic. Then following from \eqref{equ_cohomologyintegral} and
  \eqref{equ_positvehiddenpressure},
we have
\begin{align*}
P(f,-tG)=&h_{\mu}-\int_{J(f)} tGd\mu\\
\geq&\widetilde{P}(f,-tG)=\widetilde{P}(t)>0,~~~\forall t<0.
\end{align*}
So either $h_{\mu}>0$ or $\int_{J(f)} Gd\mu>0$. If $h_{\mu}>0$, then
by Ruelle's inequality, we have $\chi_{\mu}>0$; else if $\int_{J(f)}
Gd\mu>0$, then $\chi_{\mu}=\int_{J(f)} Gd\mu>0$.
\end{itemize}
In both cases, we have $\chi_{\mu}>0$, as wanted.
\end{proof}

\medskip

\begin{lemm}\cite[Prop 0.4]{Zh15}\label{lem_tree pressure}
Let $f:J(f)\to J(f)$ be an interval map in $\sA$, and
$u:J(f)\to\R\cup\{-\infty\}$ be a upper semi-continuous potential in
$\cU$ with $\Lambda(u)$ the resulting singular set. If $u$ is
hyperbolic and non-exceptional for $f$, then for every periodic
point $x\in J(f)$, or every non-periodic point $x\in
J(f)\backslash\bigcup_{i=-\infty}^{\infty}f^{i}(\Lambda(u))$, we
have
\begin{equation}\label{equ_tree pressure}
P(f,u)=\lim_{n\to\infty}\frac{1}{n}\log\sum_{y=f^{-n}(x)}\exp(S_{n}(u)(y)).
\end{equation}
\end{lemm}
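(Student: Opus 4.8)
The plan is to establish the two estimates $\limsup_{n\to\infty}\frac1n\log L_n(x)\le P(f,u)$ and $\liminf_{n\to\infty}\frac1n\log L_n(x)\ge P(f,u)$, where I abbreviate $L_n(x):=\sum_{y\in f^{-n}(x)}\exp(S_n(u)(y))$ with the convention that a term equal to $\exp(-\infty)$ counts as $0$; together these force the limit in \eqref{equ_tree pressure} to exist and to equal $P(f,u)$. Note first that for the admissible $x$ --- periodic, or non-periodic with grand orbit avoiding $\Lambda(u)$ --- one has $x\notin\Lambda(u)$, and in the non-periodic case the whole forward orbit of $x$ avoids $\Lambda(u)$; this is precisely what is needed for the inverse-branch constructions below to be based at $x$ with finite Birkhoff sums. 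For the upper bound I would invoke the easy half of the variational principle applied to the preimage measures $\mu_n:=\frac1{L_n(x)}\sum_{y\in f^{-n}(x)}\exp(S_n(u)(y))\cdot\frac1n\sum_{j=0}^{n-1}\delta_{f^j(y)}$: any weak$^*$ limit point is $f$-invariant, its free energy dominates $\limsup_n\frac1n\log L_n(x)$, and hence this $\limsup$ is at most $P(f,u)$. The one subtlety is that $u$ is unbounded below, but since near each $c\in\Lambda(u)$ one has $u(z)=b_u(c)\log|z-c|+O(1)$ with $b_u(c)>0$, the function $\exp(u)$ is bounded on $J(f)$, so preimages close to $\Lambda(u)$ carry only a little $\mu_n$-mass and the standard argument goes through; alternatively one may use an $\exp(P(f,u)+u)$-conformal measure, which exists by the Patterson-Sullivan construction since $u$ is hyperbolic. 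This direction uses neither hyperbolicity nor non-exceptionality in an essential way.

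The lower bound is the substantive half, and here both hypotheses enter. First I would produce an ergodic equilibrium state with positive entropy and positive Lyapunov exponent: hyperbolicity and \eqref{equ_hyperbolicityequilivalent} give $\sup_\nu\int u\,d\nu<P(f,u)$, and since $f\in\sA$ the entropy map is upper semi-continuous \cite[Lem2.3]{BK98}, so an equilibrium state exists; passing to an ergodic component yields an ergodic $\mu$ with $h_\mu(f)=P(f,u)-\int u\,d\mu>0$, whence $\chi_\mu\ge h_\mu(f)>0$ by Ruelle's inequality. Next, fixing $\eta>0$, I would run a Katok horseshoe construction built on the Pesin theory of Lemma \ref{lem_dobbs} to obtain an integer $N$ and a compact $f^N$-invariant set $\Gamma$ lying in a neighbourhood on which $\frac1N S_N(u)$ is continuous and bounded --- in particular $\Gamma$ is disjoint from the finite set $\Lambda(u)$ --- such that $f^N|_\Gamma$ is uniformly expanding with bounded distortion, is topologically conjugate to a full shift on at least $e^{N(h_\mu(f)-\eta)}$ symbols, and satisfies $|\frac1N S_N(u)(z)-\int u\,d\mu|<\eta$ for every $z\in\Gamma$. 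Consequently $f^{Nn}|_\Gamma$ has at least $e^{Nn(h_\mu(f)-\eta)}$ periodic points $p$ of period $Nn$, all with orbit in $\Gamma$.

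To convert these periodic points into preimages of $x$, I would use that $x$ --- itself, when $x$ is periodic, or some bounded forward iterate of it otherwise --- is $\Lambda(u)$-normal and outside $\Lambda(u)$ by Proposition \ref{lem_postercritical}, which applies precisely because non-exceptionality of $u$ is hypothesis \eqref{equ_nonexceptional} for $\Lambda=\Lambda(u)$. Combining normality with the topological exactness of $f$ and Lemmas \ref{lem_oneside}, \ref{lem_boundary} and \ref{lem_pullback} --- the very bookkeeping that produces the IMFS in the proof of Proposition \ref{prop_2new} --- one then finds a bounded integer $k$ and, for each periodic point $p$ above, a point $y_p\in f^{-(Nn+k)}(x)$ whose length-$(Nn+k)$ orbit segment shadows that of $p$ and stays off $\Lambda(u)$; the role of Proposition \ref{lem_postercritical} is exactly to route these inverse branches around $\Lambda(u)$ with a uniformly bounded detour. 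Bounded distortion together with the boundedness of $S_N(u)$ near $\Gamma$ then yields a constant $C>0$, independent of $n$ and $p$, with $S_{Nn+k}(u)(y_p)\ge Nn(\int u\,d\mu-\eta)-C$; distinct $p$ give distinct $y_p$ since the shadowing preserves their separation, so $L_{Nn+k}(x)\ge e^{Nn(h_\mu(f)-\eta)}\,e^{Nn(\int u\,d\mu-\eta)-C}=e^{Nn(P(f,u)-2\eta)-C}$. Appending one further bounded block of inverse branches, which remains off $\Lambda(u)$, fills the indices between consecutive multiples of $N$, so $\liminf_m\frac1m\log L_m(x)\ge P(f,u)-2\eta$, and letting $\eta\downarrow0$ closes the lower bound and finishes the proof.

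The step I expect to be the main obstacle is the horseshoe construction: one must produce $\Gamma$ that simultaneously (i) has topological entropy within $\eta$ of $h_\mu(f)$, (ii) is uniformly expanding with bounded distortion so that $S_n(u)$ can be controlled along its inverse branches, and (iii) stays a definite distance from the finitely many poles of $u$. This is a Katok-type argument for the non-uniformly hyperbolic interval maps of $\sA$, and the delicate part is the interplay between the Pesin blocks furnished by Lemma \ref{lem_dobbs} and the singular set $\Lambda(u)$ --- including the need to keep the whole $f$-orbit $\{z,f(z),\dots,f^{N-1}(z)\}$ of points $z\in\Gamma$ away from $\Lambda(u)$. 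The secondary difficulty, routing the inverse branches from $x$ around $\Lambda(u)$ with a uniformly bounded detour, is essentially Proposition \ref{lem_postercritical} combined with the topological-exactness lemmas already available, exactly as in the proof of Proposition \ref{prop_2new}.
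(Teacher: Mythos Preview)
The paper does not give its own proof of this lemma: it is quoted verbatim as Proposition~0.4 of \cite{Zh15} and no argument appears here, so there is nothing in the present paper to compare your attempt against directly.

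Your strategy is sound. The upper bound is indeed the routine direction and needs neither hypothesis. For the lower bound, the Katok--horseshoe route will work, and your identification of Proposition~\ref{lem_postercritical} as the device for routing inverse branches from $x$ around $\Lambda(u)$ with bounded cost is exactly right; this is the same bookkeeping as in Proposition~\ref{prop_2new} and Lemma~\ref{lem_period}. The point you flag as the main obstacle---building $\Gamma$ so that the \emph{full} $f$-orbit $\{z,f(z),\dots,f^{N-1}(z)\}$ of every $z\in\Gamma$ stays a definite distance from the finite set $\Lambda(u)$---is genuine but manageable: since $\mu$ is non-atomic, $\mu\bigl(\bigcup_{j=0}^{N-1}f^{-j}(\Lambda(u))\bigr)=0$, so the Pesin blocks of Lemma~\ref{lem_dobbs} can be chosen inside the complement of a neighbourhood of this finite set before running the Katok construction.

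That said, judging from Lemma~\ref{lem_interation} (also cited from \cite{Zh15} and clearly a companion result), the argument in \cite{Zh15} almost certainly bypasses the horseshoe and goes through the transfer operator instead: for $u\in\cU$ the weight $\exp(u)$ is H\"older, hyperbolicity reads $\sup_{J(f)}\exp(S_N(u))<\exp(NP(f,u))$, and this is precisely the contraction hypothesis for the Keller-space spectral gap (Corollary~\ref{Lem_spectral gap}); then $L_n(x)=\cL_{\exp(u)}^n\mathbf 1(x)$ and $\tfrac1n\log L_n(x)\to P(f,u)$ follows once one checks the eigenfunction is positive at the admissible $x$, which is where non-exceptionality and the restriction on $x$ enter. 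Your approach buys a self-contained ergodic-theoretic proof independent of the spectral machinery; the transfer-operator route is shorter and meshes directly with what the paper already sets up in the appendix.
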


\medskip

The following lemma is a consequence of a general method on
construction of a conformal measure, which is usually known as the
``Patterson-Sullivan method''. This method is introduced
respectively by Sullivan \cite{Sul83} in the setting of complex
rational maps, and by Denker and Urbanski \cite{DU91b} in the setting
of real interval maps.
\begin{lemm}\cite[Prop 3.2]{LiRiv13b}\label{lem_conformal measure}
Let $f:I\to I$ be a continuous interval map, let $X$ be a compact
subset of $I$ that contains at least 2 points and satisfies
$f^{-1}(X)\subset X$, and let $u:X\to\R\cup\{-\infty\}$ be a upper
semi-continuous function in $\cU$. Assume that $f$ has no periodic
critical point in $X$, and that there is a point of $X$ and an
integer $N>1$ such that the number
$$
P_{x_{0}}:=\limsup_{n\to+\infty}\frac{1}{n}\log\sum_{y\in
f^{-n}(x_{0})}\exp(S_{n}(u)(y))
$$
satisfies $P_{x_{0}}>\sup_{X}\frac{1}{N}S_{N}(u)$. Then there is an
atom-free $\exp(P_{x_{0}}-u)-$conformal measure for $f$. If addition
$f$ is topologically exact on $X$, then the support of this
conformal measure is equal to $X$.
\end{lemm}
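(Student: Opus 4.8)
The plan is to build the measure by the Patterson--Sullivan construction, in the form used by Denker--Urba\'nski, and then to extract atom-freeness and full support from the strict inequality $P_{x_0}>\sup_X\frac1N S_N(u)$. Write $c_n:=\sum_{y\in f^{-n}(x_0)}\exp(S_n(u)(y))$ and $R:=\exp(-P_{x_0})$, so that the Poincar\'e-type series $\sum_{n\ge0}c_ns^n$ has radius of convergence $R$. On $X$ introduce the finite Borel measures $\nu_n:=\sum_{y\in f^{-n}(x_0)}\exp(S_n(u)(y))\,\delta_y$ (so $\nu_n(X)=c_n$ and $\nu_0=\delta_{x_0}$) and, for $0<s<R$, the finite measure $\widetilde\mu_s:=\sum_{n\ge0}s^n\nu_n$, with total mass $\Xi(s):=\sum_{n\ge0}c_ns^n$. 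Let $\cL_u$ denote the transfer operator $\cL_u\phi(x):=\sum_{y\in f^{-1}(x)}e^{u(y)}\phi(y)$. The one-step pull-back identities $f^{-(n+1)}(x_0)=f^{-1}(f^{-n}(x_0))$ and $S_{n+1}(u)(z)=u(z)+S_n(u)(f(z))$ give the key recursion $\cL_u^*\nu_n=\nu_{n+1}$, hence (before the modification below) $\cL_u^*\widetilde\mu_s=s^{-1}\bigl(\widetilde\mu_s-\delta_{x_0}\bigr)$.

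Next comes the standard dichotomy. If $\Xi(s)\to+\infty$ as $s\uparrow R$, proceed directly. Otherwise apply Patterson's lemma to produce weights $a_n>0$ with $a_{n+1}/a_n\to1$ such that $\sum_n a_nc_ns^n$ still has radius of convergence $R$ but diverges as $s\uparrow R$, and replace $\nu_n$ by $a_n\nu_n$ everywhere; the slow variation $a_{n+1}/a_n\to1$ is exactly what makes the recursion persist as $\cL_u^*\widetilde\mu_s=s^{-1}(\widetilde\mu_s-a_0\delta_{x_0})+E_s$ with an error $E_s$ that becomes negligible after normalisation. Set $\mu_s:=\widetilde\mu_s/\Xi(s)\in\cM(X)$. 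By weak$^*$ compactness of $\cM(X)$ pick $s_k\uparrow R$ with $\mu_{s_k}\to\mu$ weak$^*$; since each $\mu_s$ is supported on the backward orbit $\bigcup_{n\ge0}f^{-n}(x_0)$, we get $\operatorname{supp}\mu\subseteq\overline{\bigcup_n f^{-n}(x_0)}\subseteq X$.

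The main obstacle is verifying that $\mu$ is $\exp(P_{x_0}-u)$-conformal, i.e.\ $\mu(f(A))=\int_A\exp(P_{x_0}-u)\,d\mu$ whenever $f|_A$ is injective. Passing to the limit in the recursion, the $\delta_{x_0}$-term dies because $\Xi(s)\to\infty$, the error $E_s$ dies because $a_{n+1}/a_n\to1$, and $1/s\to\exp(P_{x_0})$, so formally $\cL_u^*\mu=\exp(P_{x_0})\mu$, which is equivalent to conformality. The delicate point is that interval maps are not open, so $\cL_u$ does not preserve $C(X)$ and the weak$^*$ passage to the limit cannot be read off by duality against continuous test functions; this is handled exactly as in \cite[Prop.~3.2]{LiRiv13b}, working directly with measures of sets rather than with the dual operator: one first checks the conformality identity for Borel sets $A$ whose boundary avoids the countable set $\bigcup_{n\ge0}f^n(\mbox{Crit}(f))$ of forward images of critical points --- off that set $\cL_u$ applied to an indicator is continuous --- and then extends to all Borel sets by regularity, once $\mu$ is known to be atom-free. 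The finite singular set $\Lambda(u)$, where $e^{-u}=+\infty$, causes no trouble: conformality forces $\mu(f(A))<\infty$, so $\mu$ carries no mass that would make $\int_A e^{-u}\,d\mu$ diverge; in particular $\mu(\Lambda(u))=0$.

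Finally, atom-freeness and support. Suppose $\mu(\{p\})>0$. Then $p\notin\Lambda(u)$, for otherwise conformality gives $\mu(\{f(p)\})=\int_{\{p\}}e^{P_{x_0}-u}\,d\mu=+\infty$. If $p$ is periodic of period $q$, iterating conformality around the cycle yields $\mu(\{p\})=\exp\bigl(qP_{x_0}-S_q(u)(p)\bigr)\mu(\{p\})$; but subadditivity gives $\tfrac1q S_q(u)(p)=\tfrac1{qN}S_{qN}(u)(p)\le\sup_X\tfrac1{qN}S_{qN}(u)\le\sup_X\tfrac1N S_N(u)<P_{x_0}$, so the factor exceeds $1$ and $\mu(\{p\})=0$ --- this is precisely where the hypothesis $P_{x_0}>\sup_X\frac1N S_N(u)$ is used. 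If $p$ is not periodic, its forward orbit is infinite and, by the argument above, avoids $\Lambda(u)$, so $\mu(\{f^k(p)\})=\exp\bigl(kP_{x_0}-S_k(u)(p)\bigr)\mu(\{p\})$ for all $k$; since $\limsup_k\tfrac1k S_k(u)(p)\le\lim_k\sup_X\tfrac1k S_k(u)<P_{x_0}$, these masses grow geometrically in $k$, contradicting $\mu(X)=1$. Hence $\mu$ is atom-free. For the support statement, assume $f$ is topologically exact and let $V:=X\setminus\operatorname{supp}\mu$; if $V\neq\emptyset$, topological exactness gives $n$ with $X\subseteq f^n(V)$, and partitioning $V$ into finitely many half-open pieces $J_i$ on which $f^n$ is injective and whose (finitely many) endpoints include the set $\bigcup_{0\le j<n}f^{-j}(\Lambda(u))$, the iterated conformality identity gives $1=\mu(X)\le\sum_i\mu(f^n(J_i))=\sum_i\int_{J_i}e^{nP_{x_0}-S_n(u)}\,d\mu=0$, since $e^{-S_n(u)}$ is bounded on each $J_i$ while $\mu(J_i)=0$. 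This contradiction forces $\operatorname{supp}\mu=X$, completing the proof.
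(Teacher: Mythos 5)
The paper gives no proof of this lemma: it is quoted from \cite[Prop 3.2]{LiRiv13b}, with only a remark that the Patterson--Sullivan argument given there for H\"older potentials extends unchanged to $u\in\cU$, and your proposal is precisely that Patterson--Sullivan construction, correctly executed (in particular the use of $P_{x_{0}}>\sup_{X}\frac{1}{N}S_{N}(u)$ to exclude atoms along periodic and non-periodic orbits, and of $\Lambda(u)$-poles to exclude atoms there). The one step you do not carry out in full --- justifying the conformality $\cL_{u}^{*}\mu=e^{P_{x_{0}}}\mu$ of the weak$^{*}$ limit when $f$ is not an open map --- is exactly the point that you, like the paper's accompanying remark, refer back to \cite{LiRiv13b}, so the two treatments coincide.
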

\begin{rema}
Actually, Lemma \ref{lem_conformal measure} is proved in
\cite{LiRiv13b} by assuming the potential $u$ to be H\"{o}lder
continuous, however, its proof works without change for $u$ being in
$\cU$. In fact, the hypothesis $u\in\cU$ implies the following property:
\begin{align*}
&\forall\mu\in\cM,~~\forall A\subset J(f),~\mbox{with}~\mu(\partial
A)=\mu(\partial f(A)),~\mbox{and}~\overline{A}\subset
I\backslash\{x|~f~\mbox{not open at}~x\}\\
\Rightarrow&~~\mu(f(A))=\int_{A}\exp(P(f,u)-u)d\mu.
\end{align*}
This property is fundamental for the validity of the
Patterson-Sullivan method.
\end{rema}

\medskip

We also need a simple lemma below, and include its short proof for
completeness.
\begin{lemm}\label{lem_uppersemi}
For each interval map $f:J(f)\to J(f)$ in $\sA$, and every upper
semi-continuous function $\phi: J(f)\to\R\cup\{-\infty\}$, we have
\begin{equation}\label{equ_topandprob}
    \limsup_{n\to\infty}\sup_{J(f)}\frac{1}{n}S_{n}(\phi)=
    \sup_{\nu\in\cM(J(f),f)}\int\phi d\nu.
\end{equation}
\end{lemm}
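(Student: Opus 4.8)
The plan is to prove the two inequalities between the numbers $L:=\limsup_{n\to\infty}\sup_{J(f)}\frac1n S_n(\phi)$ and $R:=\sup_{\nu\in\cM(f,J(f))}\int\phi\,d\nu$ separately. For $L\ge R$ I would argue directly: fix $\nu\in\cM(f,J(f))$; because $\nu$ is $f$-invariant, $\int\phi\,d\nu=\int\frac1n S_n(\phi)\,d\nu\le\sup_{J(f)}\frac1n S_n(\phi)$ for every $n\ge 1$, so letting $n\to\infty$ gives $\int\phi\,d\nu\le L$, and taking the supremum over $\nu$ gives $R\le L$. The only point to check here is that $\int\phi\,d\nu\in[-\infty,+\infty)$ is well defined, which holds since $\sup_{J(f)}\phi<+\infty$.

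For the reverse inequality $L\le R$ I would use empirical measures. If $L=-\infty$ there is nothing to prove, so assume $L>-\infty$. Since $f$ is continuous and $\phi$ is upper semi-continuous, each $\phi\circ f^j$, and hence $S_n(\phi)$, is upper semi-continuous, so $S_n(\phi)$ attains its supremum on the compact set $J(f)$; choose $x_n\in J(f)$ with $\frac1n S_n(\phi)(x_n)=\sup_{J(f)}\frac1n S_n(\phi)=:s_n$ and set $\mu_n:=\frac1n\sum_{j=0}^{n-1}\delta_{f^j(x_n)}$. Using compactness of $\cM(J(f))$ for the weak$^*$ topology, I would pass to a subsequence $(n_k)$ along which $s_{n_k}\to L$ and $\mu_{n_k}\to\mu$ weak$^*$; the standard Krylov--Bogolyubov estimate $\mu_n-f_*\mu_n=\frac1n(\delta_{x_n}-\delta_{f^n(x_n)})\to 0$ then shows $\mu\in\cM(f,J(f))$.

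To conclude, I would relate $\int\phi\,d\mu$ to $L$. By construction $\int\phi\,d\mu_{n_k}=\frac1{n_k}S_{n_k}(\phi)(x_{n_k})=s_{n_k}$, and the functional $\nu\mapsto\int\phi\,d\nu$ is weak$^*$ upper semi-continuous whenever $\phi$ is upper semi-continuous and bounded above: writing $\phi$ as the decreasing limit of the Lipschitz functions $\phi_m(x):=\sup_{y\in J(f)}(\phi(y)-m\,d(x,y))$ exhibits $\nu\mapsto\int\phi\,d\nu=\inf_m\int\phi_m\,d\nu$ as an infimum of weak$^*$ continuous functionals. Hence $\int\phi\,d\mu\ge\limsup_k\int\phi\,d\mu_{n_k}=L$, so $R\ge\int\phi\,d\mu\ge L$, which completes the plan.

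I expect the only genuine subtlety, and the step I would check most carefully, to be the bookkeeping around the value $-\infty$ of $\phi$: one must make sure that the integrals $\int\phi\,d\mu_{n_k}$ are meaningful and that the monotone-approximation argument for weak$^*$ upper semi-continuity goes through. This is clean once $L$ is assumed finite, because then $s_{n_k}$ is finite and bounded below along the subsequence, which forces $\phi$ to be finite on the whole orbit of each $x_{n_k}$, so all the quantities above stay in $[-\infty,+\infty)$ and monotone convergence applies without change.
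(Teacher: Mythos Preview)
Your proposal is correct and follows essentially the same route as the paper: the inequality $R\le L$ is obtained from invariance via $\int\phi\,d\nu=\int\frac1n S_n(\phi)\,d\nu\le\sup_{J(f)}\frac1n S_n(\phi)$, and the reverse inequality is obtained by choosing maximizers $x_n$ of the upper semi-continuous Birkhoff averages, forming the empirical measures, extracting a weak$^*$ limit in $\cM(f,J(f))$, and using weak$^*$ upper semi-continuity of $\nu\mapsto\int\phi\,d\nu$. Your write-up is in fact more careful than the paper's---you make explicit the Krylov--Bogolyubov step, the Lipschitz approximation yielding weak$^*$ upper semi-continuity, and the treatment of the case $L=-\infty$---but the underlying argument is identical.
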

\begin{proof}
On one hand, for each $\nu\in\cM(f,J(f))$, and each $n>0$, we have
$$
\int S_{n}(\phi)d\nu=\sum_{k=0}^{n-1}\int\phi\circ
f^{k}d\nu=n\int\phi d\nu.
$$
So
$$
\sup_{\nu\in\cM(f,J(f))}\int\phi d\nu\leq
\limsup_{n\to\infty}sup_{J(f)}\frac{1}{n}S_{n}(\phi).
$$
On the other hand,  for each $n$, the upper semi-continuality of
$\frac{1}{n}S_{n}(\phi)$ and the compactness of $J(f)$ yields that
there is an $x_{n}\in X$, such that
$\frac{1}{n}S_{n}(\phi)(x_{n})=\sup_{J(f)}\frac{1}{n}S_{n}(\phi).$

Put $\nu_{n}:=\frac{1}{n}\sum_{j=0}^{n-1}\delta_{f^{j}(x_{0})}$, so
$\nu_{n}(\phi)=\frac{1}{n}S_{n}(\phi)=\sup_{J(f)}\frac{1}{n}S_{n}(\phi)$.
Choose a subsequence $(n_{m})_{m=1}^{+\infty}$ of positive integers
such that
$$
\lim_{n\to\infty}\frac{1}{n_{m}}S_{n_{m}}\phi(x_{n_{m}})=\limsup_{n\to\infty}\sup_{J(f)}\frac{1}{n}S_{n}(\phi).
$$
Taking a subsequence if necessary, we can further assume that
$(\nu_{n_{m}})$ converges in the $weak^{*}$ topology to a measure
$\nu\in\cM(J(f),f)$. Then we have
\begin{alignat*}{2}
\limsup_{n\to\infty}\sup_{J(f)}\frac{1}{n}S_{n}(\phi)
&=\lim_{m\to\infty}\frac{1}{n_{m}}S_{n_{m}}(\phi(x_{n_{m}}))\\
&=\lim_{m\to\infty}\int \phi d\nu_{n_m}\\
&\leq\int\phi d\nu ~~~~~&\text{(using the upper semi-continuity of
$\phi$)}\\
&\leq \sup_{\nu\in\cM(J(f),f)}\int\phi d\nu.
\end{alignat*}
Therefore, we obtain the desired assertion \eqref{equ_topandprob},
and complete the proof of this lemma.
\end{proof}

\medskip

We are ready to prove the Proposition \ref{prop_hyperbolic and
conformal measure}.

\begin{proof}[Proof of Proposition \ref{prop_hyperbolic and conformal measure}.]
{\bf 1.} This part deals with the proof of Statement $(a)$ on the
hyperbolicity of $G$. We first prove the following Claim.

{\bf Claim:} If $\mu\in\cM(J(f),f)$ is an equilibrium state, then
the measure-theoretic entropy $h_{\mu}$ is strictly positive.

We prove the Claim by contradiction. Suppose there is an equilibrium
state $\mu$ with $h_{\mu}=0$, then
\begin{equation}\label{equ_equilibrium state}
    \int G d\mu=P(f,G).
\end{equation}
Replacing by an ergodic component if possible, we can further assume
$\mu$ satisfying \eqref{equ_equilibrium state} is ergodic. Note also
from Lemma \ref{lem_positive on the Lyap} that $\chi_{\mu}>0$. We
distinguish two cases.
\begin{itemize}
  \item If $\mu$ is atomic, then $\mu$ supports on a periodic orbit
  $\cO_{\mu}$. Let $x$ be a periodic point on $\cO_{\mu}$;
  \item If $\mu$ is non-atomic, then $\mu$ supports on $J(f)$. Recall $X$ the $\mu-$full measure set in Statement (b) in Key
  Lemma, and let $x$ be a point inside the set $\left(J(f)\backslash\bigcup_{i=-\infty}^{+\infty}f^{i}(\Lambda(G))\right)\cap
  X$. Such $x$ exists, since the intersection is a $\mu-$full
  measure set.
\end{itemize}
In both cases, we have for each $t<0$,
\begin{alignat*}{2}
P(f,-tG)&=\limsup_{n\to\infty}\frac{1}{n}\log\sum_{y=f^{-n}(x)}\exp(S_{n}(-tG)(y))&~~~~~~~~~~~~~~~~~~~~~\text{(Using
Lemma \ref{lem_tree pressure})}\\
&>\int -tG d\mu&~~~~~~~~\text{(Using $\chi_{\mu}>0$ and Key Lemma)}\\
&=P(f,-tG)&~~~~~~~~~~\text{(By \eqref{equ_equilibrium state})}.
\end{alignat*}
This is evidently impossible, so we obtain the Claim.

Hence,
\begin{alignat*}{2}
P(f,-tG)&>\sup_{\nu\in\cM(J(f),f)}\int -tG d\nu
&~~~~~~~~~~~~~~~~~\enskip&~~~~~~~~~~~~~~~~~~~~~~~~~~~~~~~\text{(Using the claim in particular $h_{\mu}>0$)}\\
&=
\limsup_{n\to\infty}\sup_{J(f)}\frac{1}{n}S_{n}(-tG)&\enskip&\text{(Using
Lemma \ref{lem_uppersemi})}.
\end{alignat*}
In other word, there exists an integer $n\geq0$ such that
$P(f,-tG)>\sup_{J(f)}\frac{1}{n}S_{n}(-tG)$, which means the
potential $-tG$ is hyperbolic.

\medskip

{\bf 2.} This part deals with Statement $(b)$ on the existence of a
conformal measure. Note that for every point $x_{0}\in
J(f)\backslash\bigcup_{i=-\infty}^{+\infty}f^{i}(\Lambda(G))$, we
have
\begin{alignat*}{2}
P_{x_{0}}&=P(f,-tG)&\medskip~~~~\text{(Using Lemma \ref{lem_tree pressure})}\\
&>\sup_{J(f)}\frac{1}{N}S_{N}(-tG)&\medskip~~~~~~~~~~~~~~~~~\text{(Using
hyperbolicity)}.
\end{alignat*}
By applying Lemma \ref{lem_conformal measure} with $X=J(f)$, there
is a non-atomic $\exp(P(f,-tG)+tG)$ conformal measure. Note also $f$
is topologically exact on $J(f)$, so the support of this conformal
measure is equal to $J(f)$. This proves the Statement $(b)$, and
thus completes the proof of Proposition \ref{prop_hyperbolic and
conformal measure}.
\end{proof}

\bigskip


\bigskip

\section{Makarov-Smirnov's formalism for interval
maps}\label{sec_makarov-smirnovformalism}In this section, we finish
the proof of the interval version of Makarov-Smirnov's formalism.
The proof relies on showing the spectral gap for the corresponding
transfer operator in a certain Keller's space. A good survey of this
methodology comes from Keller's original paper \cite{Kel85}, and
also from Rivera-Leterlier's lecture note \cite{RL15}.

The following lemma is given by \cite{Zh15}.
\begin{lemm}\cite[Lemm 0.6 and Prop 0.4]{Zh15}\label{lem_interation}
Let $f:J(f)\to J(f)$ be an interval map in $\sA$, and
$u:J(f)\to\R\cup\{-\infty\}$ be a upper semi-continuous potential in
$\cU$. For each $t<0,$ suppose $-tu$ is non-exceptional and is
hyperbolic with $N:=N(t)$ being the integer such that
$\sup_{J(f)}\frac{1}{N}S_{N}(-tu)<P(f,-tu)$. Let
$\widetilde{u}:=\frac{1}{N}S_{N}(u)$, the following
properties hold.
\begin{itemize}
  \item $\sup_{J(f)}-t\widetilde{u}<P(f,-t\widetilde{u})$,~~$\exp(-tu)$ and $\exp(-t\widetilde{u})$ are H\"{o}lder
  continuous with the same H\"{o}lder exponent;
  \item  $P(f,-tu)=P(f,-t\widetilde{u})$, $-tu$ and $-t\widetilde{u}$ share the
  same equilibrium states;
  \item $-t\widetilde{u}$ is non-exceptional;
  \item There is a non-atomic
  $\exp(P(f,-t\widetilde{u})+t\widetilde{u})$-conformal measure
  supported on $J(f)$.
\end{itemize}
\end{lemm}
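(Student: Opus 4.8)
The plan is to deduce everything from two elementary facts: that $\widetilde u=\tfrac1N S_N(u)$ is a Birkhoff average of translates of $u$, and the cohomology identity
\[
\widetilde u \;=\; u + H\circ f - H,\qquad
H:=\frac1N\sum_{i=0}^{N-1}(N-1-i)\,u\circ f^{i},
\]
which follows from $\widetilde u=\tfrac1N S_N(u)$ by a one--line computation ($H$ is a finite combination of translates of $u$ with non-negative coefficients, upper semi-continuous, and finite off the finite set $\bigcup_{i=0}^{N-2}f^{-i}(\Lambda(u))$). Since $\int\widetilde u\,d\nu=\tfrac1N\sum_{j=0}^{N-1}\int u\circ f^{j}\,d\nu=\int u\,d\nu$ for every $\nu\in\cM(f,J(f))$ (both sides being $-\infty$ simultaneously when $u\notin L^1(\nu)$), we get $\cF_\nu(f,-t\widetilde u)=\cF_\nu(f,-tu)$ for all such $\nu$; taking suprema gives $P(f,-t\widetilde u)=P(f,-tu)$ together with the equality of the two sets of equilibrium states, which is the second bullet. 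The first bullet is then immediate, since $\sup_{J(f)}(-t\widetilde u)=\sup_{J(f)}\tfrac1N S_N(-tu)<P(f,-tu)=P(f,-t\widetilde u)$, the middle inequality being precisely the hyperbolicity hypothesis with $N=N(t)$.

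For the Hölder regularity, write $\exp(-t\widetilde u)=\prod_{j=0}^{N-1}\bigl(\exp(\tfrac{-t}{N}u)\circ f^{j}\bigr)$. Since $u\in\cU$ and $-t/N>0$, the function $\exp(\tfrac{-t}{N}u)$ is a Hölder function times a finite product of factors $|x-c|^{\beta_c}$ with $\beta_c\ge0$, hence bounded and Hölder continuous; as $Df$ is Hölder, $f$ (and every $f^{j}$) is Lipschitz, so each factor $\exp(\tfrac{-t}{N}u)\circ f^{j}$ has the same Hölder exponent, and a finite product of bounded Hölder functions is Hölder. Running the same argument for $\exp(-tu)$ and replacing both exponents by their minimum yields a common Hölder exponent.

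The substantive point is the third bullet. First one identifies the singular set: by non-flatness of the critical points, near any $c$ one has $|f^{j}(x)-f^{j}(c)|\asymp|x-c|^{m_j(c)}$ with $m_j(c)=\prod_{i=0}^{j-1}\ell(f^{i}(c))\ge1$, so the translate $u\circ f^{j}$ acquires a logarithmic pole at $c$ exactly when $f^{j}(c)\in\Lambda(u)$; hence $\Lambda(\widetilde u)\subseteq\bigcup_{j=0}^{N-1}f^{-j}(\Lambda(u))$, a finite set, and all residues of $\widetilde u$ are non-negative. Suppose, toward a contradiction, that $-t\widetilde u$ were exceptional, witnessed by a nonempty forward invariant finite $\Sigma$ with $\varnothing\ne f^{-1}(\Sigma)\setminus\Sigma\subseteq\Lambda(\widetilde u)$, and set
\[
\widehat\Sigma:=\{x\in\Sigma:\ f^{m}(x)\notin\Lambda(u)\ \text{for all}\ m\ge0\}.
\]
One checks: $\widehat\Sigma$ is forward invariant; it is nonempty, since $\Sigma$ contains a periodic orbit and every periodic orbit avoids $\Lambda(u)\subseteq\mbox{Crit}'(f)$ (there being no periodic critical point in $J(f)$); and $f^{-1}(\widehat\Sigma)\setminus\widehat\Sigma\subseteq\Lambda(u)$, because any $z$ in this set has $f(z)\in\widehat\Sigma$, so $f^{m}(z)\notin\Lambda(u)$ for $m\ge1$, while $z\notin\widehat\Sigma$ forces $z\in\Lambda(u)$ in either remaining case ($z\in\Sigma$ with its orbit meeting $\Lambda(u)$, or $z\in f^{-1}(\Sigma)\setminus\Sigma\subseteq\bigcup_{j<N}f^{-j}(\Lambda(u))$). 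Finally $f^{-1}(\widehat\Sigma)\setminus\widehat\Sigma\ne\varnothing$, for otherwise $\widehat\Sigma$ would be a finite completely invariant set, making the backward orbit of any of its points finite and so contradicting topological exactness, which forces backward orbits to be dense in $J(f)$ (a set with no isolated points, hence infinite). Thus $\widehat\Sigma$ is a $\Lambda(u)$-exceptional set, contradicting the non-exceptionality of $-tu$. Pinning down $\Lambda(\widetilde u)$ correctly and engineering this ``purging'' of $\widehat\Sigma$ is the step I expect to be the main obstacle.

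For the last bullet, one runs the Patterson--Sullivan construction for $-t\widetilde u$. By the first and third bullets $-t\widetilde u$ is hyperbolic and non-exceptional, and it still has only logarithmic singularities with non-negative residues, so the proofs of Lemma \ref{lem_tree pressure} and Lemma \ref{lem_conformal measure} apply to it; thus for every $x_0$ outside the (measure-zero) grand orbit of $\Lambda(\widetilde u)$ one has $P_{x_0}=P(f,-t\widetilde u)>\sup_{J(f)}\tfrac1{N'}S_{N'}(-t\widetilde u)$ for a suitable $N'>1$, and Lemma \ref{lem_conformal measure} produces a non-atomic $\exp(P(f,-t\widetilde u)+t\widetilde u)$-conformal measure whose support equals $J(f)$ by topological exactness. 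Alternatively, one may simply twist a $\exp(P(f,-tu)+tu)$-conformal measure by the density $e^{tH}$ furnished by the cohomology identity above.
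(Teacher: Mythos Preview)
The paper does not prove this lemma; it is quoted from \cite{Zh15}, so there is no in--paper argument to compare against. Your proof is essentially correct and self--contained. The cohomology identity $\widetilde u=u+H\circ f-H$ immediately yields the second bullet (equal integrals against invariant measures, hence equal pressures and equilibrium states), and the first bullet is then the hyperbolicity hypothesis rewritten. Your H\"older argument is fine under the natural reading that one only needs \emph{some} common exponent (this is all that is used downstream in Corollary~\ref{Lem_spectral gap}).

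Two small remarks. First, $\widetilde u$ need not lie in $\cU$ as defined in \eqref{equ_uppersemicontinuous1}, since its logarithmic poles sit at $\bigcup_{j=0}^{N-1}f^{-j}(\Lambda(u))$, not only at critical points; your identification of $\Lambda(\widetilde u)$ via non--flatness is correct, and both the definition of ``exceptional'' and the proofs of Lemmas~\ref{lem_tree pressure} and~\ref{lem_conformal measure} go through verbatim once one allows finitely many log poles with non-negative residues at arbitrary points of $J(f)$. It is worth stating this extension explicitly. Second, your ``alternative'' for the last bullet---twisting an $\exp(P(f,-tu)+tu)$-conformal measure by $e^{tH}$---is formally correct but needs $e^{tH}\in L^{1}(m)$; since $t<0$ and $H$ has logarithmic $-\infty$ poles, $e^{tH}$ blows up like $|x-p|^{tC}$ with $tC<0$ near each pole, and integrability against $m$ is not automatic. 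Your primary route through Lemma~\ref{lem_conformal measure} avoids this issue and should be the one you keep.

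The core step---purging $\Sigma$ to $\widehat\Sigma=\{x\in\Sigma:f^m(x)\notin\Lambda(u)\ \forall m\ge0\}$ and checking $f^{-1}(\widehat\Sigma)\setminus\widehat\Sigma\subseteq\Lambda(u)$---is exactly right: the case split (either $z\in\Sigma$ with orbit meeting $\Lambda(u)$, or $z\in f^{-1}(\Sigma)\setminus\Sigma\subseteq\Lambda(\widetilde u)\subseteq\bigcup_{j<N}f^{-j}(\Lambda(u))$), together with $f^m(z)\notin\Lambda(u)$ for $m\ge1$, forces $z\in\Lambda(u)$ in both cases. Nonemptiness of $\widehat\Sigma$ via a periodic orbit and nonemptiness of $f^{-1}(\widehat\Sigma)\setminus\widehat\Sigma$ via topological exactness are handled correctly.
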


\medskip

Before the proof the interval version of Makarov-Smirnov's
formalism, it is important to obtain the following.
\begin{prop}\label{prop_mar-smi_nonexceptional}
Let $f:J(f)\to J(f)$ be an interval map in $\sA$. Let $u:J(f)\to
\R\cup\{-\infty\}$ be an upper semi-continuous potential in $\cU$.
If $u$ is hyperbolic and non-exceptional, then for each $t<0$, the
following holds:
\begin{enumerate}
  \item there is a unique non-atomic equilibrium state $\mu$ of $f$
  for the potential $-tu$. Moreover, $\mu$ is fully supported on
  $J(f)$, and has strictly positive entropy. In addition, $\mu$ is
  exponentially mixing;
  \item the pressure function $P(f,-tu)$ is equal to $\widetilde{P}(f,-tu)$, and it is real analytic on
  $(-\infty,0)$.
\end{enumerate}
\end{prop}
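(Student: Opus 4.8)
The plan is to reduce, via Lemma~\ref{lem_interation}, to a H\"{o}lder potential for which the transfer operator has a spectral gap on a Keller space, and then to extract every assertion of the Proposition from that gap and from analytic perturbation theory. Fix $t<0$, put $\psi:=-tu$, and let $N=N(t)$ be the integer coming from the hyperbolicity of $\psi$. By Lemma~\ref{lem_interation} the potential $\widetilde\psi:=-t\widetilde u=\tfrac1N S_N(\psi)$ is non-exceptional, satisfies $\sup_{J(f)}\widetilde\psi<P(f,\widetilde\psi)$, has $\exp(\widetilde\psi)$ H\"{o}lder continuous, obeys $P(f,\widetilde\psi)=P(f,\psi)$ with the same equilibrium states, and admits a non-atomic $\exp(P(f,\widetilde\psi)-\widetilde\psi)$-conformal measure $\nu$ with $\mathrm{supp}\,\nu=J(f)$. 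Since passing from $u$ to $\widetilde u$ changes neither $P$, nor the family of non-atomic invariant measures (hence not $\widetilde P$), nor the equilibrium states, it suffices to prove the Proposition for $\widetilde\psi$.

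Next I would study the transfer operator $\cL g(x)=\sum_{y\in f^{-1}(x)}\exp(\widetilde\psi(y))g(y)$, normalised by its leading eigenvalue $\exp(P(f,\widetilde\psi))$, acting on the Keller ($BV$-type) space of the appendix. The bounded range condition $\sup_{J(f)}\widetilde\psi<P(f,\widetilde\psi)$ together with the H\"{o}lder continuity of $\exp\widetilde\psi$ yields a Lasota--Yorke inequality, so the normalised $\cL$ is bounded and quasi-compact with $1$ a simple isolated leading eigenvalue --- a spectral gap. This produces a unique positive eigenfunction $\rho\in BV$, hence a unique equilibrium state $\mu=\rho\,\nu$ of $f$ for $\widetilde\psi$, and therefore for $-tu$; since $\rho$ is bounded and $\nu$ is non-atomic and fully supported, $\mu$ is non-atomic and fully supported on $J(f)$, and the gap gives exponential decay of correlations for H\"{o}lder observables. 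Being the unique equilibrium state, $\mu$ is ergodic; if $h_\mu=0$ then $\int(-tu)\,d\mu=P(f,-tu)$, and hyperbolicity of $-tu$ forces $\chi_\mu>0$ (by \cite[Theo~1]{Li14}, or because an atomic ergodic measure sits on a repelling cycle). Then, exactly as in the Claim inside the proof of Proposition~\ref{prop_hyperbolic and conformal measure}, choosing $x\in\mathrm{supp}\,\mu$ outside $\bigcup_i f^i(\Lambda(u))$ and combining the tree-pressure formula (Lemma~\ref{lem_tree pressure}) with the strict inequality of the Key Lemma gives $P(f,-tu)=\lim_n\tfrac1n\log\sum_{y\in f^{-n}(x)}\exp(S_n(-tu)(y))>\int(-tu)\,d\mu=P(f,-tu)$, which is absurd; hence $h_\mu>0$. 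Finally, since this unique $\mu$ is non-atomic it is admissible for $\widetilde P$, so $\widetilde P(f,-tu)\ge\cF_\mu(f,-tu)=P(f,-tu)\ge\widetilde P(f,-tu)$, i.e.\ $P(f,-tu)=\widetilde P(f,-tu)$; this settles (a) and the first half of (b).

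For real analyticity I would argue locally. Fix $t_0<0$, put $N=N(t_0)$ and $\widetilde u=\tfrac1N S_N(u)$. Continuity of $t\mapsto\sup_{J(f)}(-t\widetilde u)$ and of the convex function $t\mapsto P(f,-t\widetilde u)$ preserve the strict inequality $\sup_{J(f)}(-t\widetilde u)<P(f,-t\widetilde u)$ on a neighbourhood $\cN$ of $t_0$; for $t\in\cN$ the weights $y\mapsto\exp(-t\widetilde u(y))$ depend real-analytically on $t$, so the operators $\cL_t g(x)=\sum_{y\in f^{-1}(x)}\exp(-t\widetilde u(y))g(y)$ form a real-analytic family of bounded operators on the fixed Keller space, each with a spectral gap and leading eigenvalue $\lambda(t)=\exp(P(f,-t\widetilde u))=\exp(P(f,-tu))$. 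By Kato's analytic perturbation theory for a simple isolated eigenvalue, $\lambda(t)$, and hence $P(f,-tu)=\log\lambda(t)$, is real-analytic near $t_0$; as $t_0\in(-\infty,0)$ is arbitrary, $P(f,-tu)$ --- which equals $\widetilde P(f,-tu)$ by the previous paragraph --- is real-analytic on $(-\infty,0)$.

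I expect the main obstacle to be establishing the spectral gap in the interval (non-open) setting: verifying that the transfer operator on the appropriate Keller space satisfies a Lasota--Yorke inequality. This is precisely why the hypotheses $u\in\cU$ and non-exceptionality, which by Lemma~\ref{lem_interation} descend to the H\"{o}lder potential $\widetilde\psi$, and the functional-analytic framework of the appendix are indispensable; once the gap is available, uniqueness, non-atomicity, full support, mixing, the identity $P=\widetilde P$, and analyticity all follow by comparatively soft arguments.
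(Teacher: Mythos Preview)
Your approach is essentially the paper's: reduce via Lemma~\ref{lem_interation} to a potential with the bounded range condition, obtain a spectral gap on a Keller space, and deduce uniqueness, mixing, $P=\widetilde P$, and real analyticity by analytic perturbation of the leading eigenvalue. The analyticity argument (fixing $N=N(t_0)$ and using continuity of pressure to keep the same $N$ on a neighbourhood) matches the paper almost verbatim.

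One point deserves correction. Your argument for $h_\mu>0$ is both circuitous and rests on a shaky step: you invoke \cite[Theo~1]{Li14} to get $\chi_\mu>0$ from hyperbolicity, but that result is stated for H\"older potentials, and $-tu$ need not be H\"older; you then appeal to the Key Lemma, which in the paper is proved only for the specific potential $G$. None of this is needed. Since you have already passed to $\widetilde\psi$ with $\sup_{J(f)}\widetilde\psi<P(f,\widetilde\psi)$, any equilibrium state $\mu$ satisfies
\[
h_\mu(f)=P(f,\widetilde\psi)-\int\widetilde\psi\,d\mu\ \ge\ P(f,\widetilde\psi)-\sup_{J(f)}\widetilde\psi\ >\ 0,
\]
which is exactly the paper's argument. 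With this replacement your proof is correct and coincides with the paper's; the paper additionally cites \cite[Theo~6]{Dob13b} for uniqueness (using $\chi_\mu>0$ from Ruelle's inequality), whereas you read it off the simplicity of the leading eigenvalue in Corollary~\ref{Lem_spectral gap} --- either route works.
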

\begin{proof}
Fix a negative value $t\in(-\infty,0)$, and let $N:=N(t)$ be the
integer such that $\sup_{J(f)}\frac{1}{N}S_{N}(-tu)<P(f,-tu)$. By
the hypothesis on $u\in \cU$, the function $\exp(-tu)$ is
$\alpha$-H\"{o}lder continuous. By the hypothesis of the
hyperbolicity and non-exceptionality, Proposition
\ref{prop_hyperbolic and conformal measure} gives that there exists
a $\exp(P(f,-tu)+tu)$-conformal measure $m$.

With this convention, put
$$
g_{t}:=\exp(-tu-P(f,-tu)).
$$
It is clear that $g_{t}$ is $\alpha$-H\"{o}lder continuous, so
$g_{t}$ is of bounded $1/\alpha$ variation. On the other hand, put
$g_{t,N}$ as in \eqref{equ_iternationss}, then
$\sup_{J(f)}g_{t,N}<1$. Therefore, for each
$\widetilde{\alpha}\in(0,\alpha]$, the assertions of Corollary
\ref{Lem_spectral gap} hold, with $p=1/\widetilde{\alpha}$. Let $A$
be the constant given by Corollary \ref{Lem_spectral gap}, and
consider the Keller's space $H^{\widetilde{\alpha},1}(m)$. In the
view of Corollary \ref{Lem_spectral gap}, it is sufficient to verify
the uniqueness of the equilibrium state in Statement $(a)$, and the
Statement $(b)$.

\medskip

We first proceed the proof on the uniqueness of the equilibrium
state. Put $\widetilde{u}:=\frac{1}{N}S_{N}(u)$, then equilibrium
state $\nu$ of $f$ for the potential $-t\widetilde{u}$ has
\begin{alignat*}{2}
h_{\nu}(f)&=P(f,-t\widetilde{u})+\int_{J(f)}t\widetilde{u}d\nu&\\
&\geq
P(f,-t\widetilde{u})-\sup_{J(f)}(-t\widetilde{u})>0&~~\text{(Using
Lemma \ref{lem_interation})}.
\end{alignat*}
By Ruelle's inequity, taking an ergodic exponent if necessarily, we
have $\chi_{\nu}>0$. Note also $f$ is topologically exact on $J(f)$,
then \cite[Theo 6]{Dob13b} yields that the equilibrium state $\nu$
of $f$ for the potential $-t\widetilde{u}$ is unique. Applying Lemma
\ref{lem_interation} again, we get the uniqueness of the equilibrium
state of $f$ for the potential $-tu$.

\medskip

In the rest of the proof, we will prove Statement $(b)$. In other
words, we will show the integer $N$ is actually independent of $t$
in a neighborhood of $t$. 
Fix a negative value $t$ and for each $\epsilon\in\R$, put
$$
u_{\epsilon}:=(-t+\epsilon)u,~~\mbox{and}~~g_{t+\epsilon}:=\exp(u_{\epsilon}-P(f,u_{\epsilon})).
$$
It is clear that when $\epsilon$ is sufficiently small, then
$u_{\epsilon}$ is in $\cU$, and notice the pressure is continuous,
so there is an $\epsilon_{0}>0$, such that
$$
\sup_{J(f)}\frac{1}{N}S_{N}(\epsilon
u)<P(f,u_{\epsilon})-\sup_{J(f)}\frac{1}{N}S_{N}(-tu),~~\forall
\epsilon\in (-\epsilon_{0},\epsilon_{0}).
$$
Therefore,
$$
\sup_{J(f)}\frac{1}{N}S_{N}(u_{\epsilon})\leq
\sup_{J(f)}\frac{1}{N}S_{N}(-tu)+\sup_{J(f)}\frac{1}{N}(\epsilon
u)<P(f,u_{\epsilon}).
$$
So, from Part $(1)$ of Corollary \ref{Lem_spectral gap}, we have
$\exp(P(f,u_{\epsilon}))$ is equal to the spectral radius
$\cL_{g_{t+\epsilon}}$. Moreover, from Part (3) of Corollary
\ref{Lem_spectral gap}, the function $\epsilon\to
\exp(P(f,u_{\epsilon}))$ is real analytic on
$(-\epsilon_{0},\epsilon_{0})$. The proof of this lemma is thus
completed.
\end{proof}

\bigskip

We are finally ready to proof the interval version of
Makarov-Smirnov's formalism.
\begin{proof}[Proof of Corollary \ref{coro_geometric}]
For each interval map $f:J(f)\to J(f)$ in $\sA,$ it directly follows
from the definitions of $P,\widetilde{P}$ that
$$
P(t)\geq\max\{\widetilde{P}(t),-t\chi_{\max}\},~~\forall t<0.
$$
On the other hand, Key Lemma implies that there exists a new
potential $G,-h\in\cU$ such that $-tG=-t\log|Df|-t(h\circ f)+th$ and
$h$ only has poles in $\Sigma_{\max}$, and such that
\begin{equation}\label{equ_final}
\widetilde{P}(t)=\widetilde{P}(f,-tG),~~\forall t<0.
\end{equation}
So
$$
P(t)=\max\{\widetilde{P}(t),-t\chi_{\max}\},~~\forall t<0.
$$
Note also from Proposition \ref{prop_hyperbolic and conformal
measure}, we have $-tG$ is hyperbolic and non-exceptional for $f$.
Applying Proposition \ref{prop_mar-smi_nonexceptional} with $u=G$,
we have
$$
\widetilde{P}(t)=\widetilde{P}(f,-tG)=P(f,-tG),~~\forall t<0,
$$
and $P(f,-tG)$ is real analytic at $(-\infty,0)$. Therefore we
obtain Statement $(a).$

\medskip

A phase transition occurs if and only if
$\chi_{*}>\widetilde{P}'(-\infty)$. Since we obtain the spectral gap
result in Keller's space for the new potential $-tG$ for every $t<0$, followed by the same arguments in \cite[Rem
3.7]{MakSmi00}, we have
\begin{alignat*}{2}
\widetilde{P}'(-\infty)&=\widetilde{P'}(f,\infty\cdot G)\\
&=P'(f,\infty\cdot G)\\
&=\sup\big\{\int_{J(f)}Gd\nu:~~\nu\in\cM(f,J(f))\big\}\\
&=\sup\big\{\int_{J(f)}Gd\nu:~~\nu\in\cM(f,J(f)),~~\nu(\Sigma_{\max})=0\big\}\\
&=\sup\{\chi_{\nu}:~\nu\in\cM(f,J(f)),~~\nu(\Sigma_{\max})=0\}.
\end{alignat*}
So we obtain Statement $(b)$.
\end{proof}

\bigskip

\section{Revisit Ruelle's result and complex Makarov-Smirnov's formalism}\label{sec_further researches}
The new ideas in our results yield also some progresses in the
complex setting. In this section, we will revisit Ruelle's result
and the complex version Makarov-Smirnov's formalism, and their
relations under the views of our Key Lemma \footnote{We highlight
that our Key Lemma also works well for all rational maps under an
even simpler proof.}. To simplify the notation, let $f$ be a
rational map of degree at least two on the Riemann sphere, then the
Julia set $J(f)$ is a perfect set (i.e., closed set without isolated
points) that is equal to the closure of repelling periodic points.
Moreover, $J(f)$ is completely invariant and $f$ is topological
exact on $J(f)$. Denote by $\cU$ an analogous subclass (but in the
complex setting) of upper semi-continuous potentials from
$J(f)\to\hat{\C}$ as in \eqref{equ_uppersemicontinuous1}, and denote
by $BV_{2}$ the Banach space of functions from $\C$ to itself, for
which the second derivatives are complex measures. Analogous to the
discussion in the real setting, we also restrict the action of $f$
on its Julia set throughout this section. By abusing the notation,
we also say $u:J(f)\to \C$ in $BV_{2}$ if it is a restriction of a
function $u'|_{J(f)}$, where $u'\in BV_{2}$.

\medskip

Recall that the proof on the Makarov-Smirnov's formalism for complex
rational maps is closely related to the following fact (stated
below) by Ruelle \cite{Rue92}.

\begin{fact}\cite[Coro 6.3]{Rue92}\label{fac_Ruelle's result}
If $\log|u|: J(f)\to\hat{\C}$ is a hyperbolic upper semi-continuous
potential with $u:J(f)\to\C$ in the functional space $BV_{2}$, and
satisfying a certain \emph{integrability condition} stated in
\cite[Coro 6.3]{Rue92} (This condition implies that $u$ vanishes at
all the critical points of $f$). Then the corresponding transfer
operator $\cL_{|u|}$ is bounded and has a spectrum gap under the
space of $BV_{2}$.
\end{fact}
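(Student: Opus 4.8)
The plan is to prove the spectral gap for $\cL_{|u|}$ on $BV_2$ along the classical Ionescu-Tulcea--Marinescu / Hennion route: establish a Doeblin--Fortet (Lasota--Yorke) inequality for the iterates of $\cL_{|u|}$, combine it with the compactness of a suitable embedding to get quasi-compactness, and then run a Perron--Frobenius argument to rule out peripheral spectrum. Throughout, $\cL_{|u|}$ is the transfer operator for the potential $\log|u|$, namely $(\cL_{|u|}g)(z)=\sum_{f(w)=z}|u(w)|\,g(w)$, whose spectral radius on any reasonable space is $e^{P}$ with $P:=P(f,\log|u|)$.

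First I would show that $\cL_{|u|}$ maps $BV_2$ boundedly into itself. Away from the (finitely many) critical values of $f$, the fibre $f^{-1}(z)$ is a union of holomorphic inverse branches $w_{j}$, and one has to bound the complex-measure norm of $\partial^{2}\big(|u\circ w_{j}|\,(g\circ w_{j})\big)$. By the Leibniz rule the hazardous contributions are those carrying the second derivatives of the branches $w_{j}$ (equivalently $Df$ and Schwarzian-type quantities evaluated along $w_{j}$), which degenerate as $z$ tends to a critical value. The \emph{integrability condition} of \cite[Coro 6.3]{Rue92}, which forces $u$ to vanish at every critical point of $f$ to sufficiently high order, is exactly what renders these terms integrable against the second-derivative measure; a separate, purely local check shows that at a critical value the colliding branches glue together so that $\sum_{j}|u\circ w_{j}|(g\circ w_{j})$ extends across it as a $BV_2$ function.

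Next I would iterate. The leading part of $\|\cL_{|u|}^{n}g\|_{BV_2}$ is dominated by $\exp(\sup_{J(f)}S_{n}(\log|u|))\,\|g\|_{BV_2}$, the remaining terms being controlled by a weaker norm ($L^{1}$ with respect to the $e^{P-\log|u|}$-conformal measure, say). Hyperbolicity of $\log|u|$ means $\sup_{J(f)}S_{N}(\log|u|)<N\,P$ for some $N$; passing to the $N$-th Birkhoff average of the potential I may assume $\theta:=\exp(\sup_{J(f)}\log|u|)<e^{P}$, and obtain a Lasota--Yorke inequality
\[
\|\cL_{|u|}^{n}g\|_{BV_2}\le C\theta^{n}\|g\|_{BV_2}+C_{n}\|g\|_{L^{1}}.
\]
Since the embedding $BV_2\hookrightarrow L^{1}$ is compact (a two-dimensional Helly/Rellich selection argument), Hennion's theorem gives that $\cL_{|u|}$ is quasi-compact on $BV_2$ with essential spectral radius at most $\theta<e^{P}$.

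Finally, to upgrade quasi-compactness to an honest spectral gap, I would produce a strictly positive eigenfunction $\psi\in BV_2$ for the eigenvalue $e^{P}$ --- either by a fixed-point argument on a cone of nonnegative $BV_2$ densities, or by setting $\psi=d\mu/dm$ for the $e^{P-\log|u|}$-conformal measure $m$ and the corresponding equilibrium state $\mu$ --- and then normalise to $\widehat{\cL}:=e^{-P}\psi^{-1}\cL_{|u|}(\psi\,\cdot\,)$. Topological exactness of $f$ on $J(f)$ makes $\widehat{\cL}$ topologically mixing, which together with quasi-compactness forces $1$ to be a simple eigenvalue of $\widehat{\cL}$ with no other eigenvalue of modulus $1$; translating back, $\sigma(\cL_{|u|})\subset\{e^{P}\}\cup\{|z|\le r\}$ for some $r<e^{P}$. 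The main obstacle is the boundedness/Lasota--Yorke step: obtaining a clean second-order estimate in the two-dimensional space $BV_2$ in the presence of critical points, where one must use the vanishing of $u$ at the critical set both to tame the second-derivative terms and to control the merging of inverse branches over critical values.
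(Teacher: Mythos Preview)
The paper does not prove this statement at all: it is recorded as a \emph{Fact} quoted verbatim from \cite[Coro~6.3]{Rue92} and used as a black box in the proof of Corollary~\ref{coro_complex}. There is therefore no ``paper's own proof'' to compare your attempt against.

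That said, your outline is a faithful sketch of the route Ruelle himself takes in \cite{Rue92} (see also Smirnov's notes \cite{Smi99}): boundedness of $\cL_{|u|}$ on $BV_{2}$ via the integrability condition, a Lasota--Yorke type inequality whose contraction factor comes from $\sup S_{N}(\log|u|)<NP$, quasi-compactness via Hennion's theorem, and then a Perron--Frobenius argument for simplicity of the leading eigenvalue. The one place where your sketch is genuinely delicate is the step you flag yourself: the second-order $BV_{2}$ estimate across critical values. In Ruelle's argument this is handled not by a pointwise Leibniz bound but by working with distributional second derivatives and showing that the integrability condition forces the singular contributions from colliding branches to define a finite complex measure; getting this right is the real content of \cite[Coro~6.3]{Rue92}, and your description (``a separate, purely local check'') understates the work involved. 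Apart from that, your plan is sound and matches the literature.
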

Based our Key Lemma and the above Fact \ref{fac_Ruelle's result}, we
can reprove the complex Makarov-Smirnov's result under an addition
assumption of non-exceptionality.
\begin{coro}\label{coro_complex}
If $\log|Df|$ is non-exceptional, then the transfer operator $\cL$
associated by the weight $|Df|^{-t}$ admits a spectral gap under
$BV_{2}$ for each $t<0$. Moreover, $P(t)$ equals to
$\widetilde{P}(t)$, and is real analytic on $(-\infty,0)$, and
admits a unique non-atomic equilibrium state for each $t<0$.
\end{coro}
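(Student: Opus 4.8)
The plan is to reduce Corollary~\ref{coro_complex} to Ruelle's Fact~\ref{fac_Ruelle's result}, using the Key Lemma as the bridge; throughout, recall that (as remarked after the Key Lemma) every statement we invoke is valid verbatim for rational maps of degree at least two acting on their Julia set. Since $\log|Df|$ is non-exceptional we have $\Sigma_{\max}=\emptyset$, so in the notation of the Key Lemma the co-homology correction is trivial: $h\equiv 0$ and $G=\log|Df|$. The Key Lemma therefore supplies, for every $t<0$ and every ergodic $\mu\in\cM(f,J(f))$ with $\chi_{\mu}>0$, a set $X$ of full $\mu$-measure with
\[
\limsup_{n\to\infty}\tfrac1n\log\sum_{y\in f^{-n}(x_{0})}\exp\bigl(S_{n}(-t\log|Df|)(y)\bigr)>\int_{J(f)}-t\log|Df|\,d\mu,\qquad x_{0}\in X .
\]
First I would show that $-t\log|Df|$ is hyperbolic for every $t<0$, exactly as in Part~1 of the proof of Proposition~\ref{prop_hyperbolic and conformal measure}: positivity of the topological entropy of $f|_{J(f)}$ gives $P(t)>0$, hence (Ruelle's inequality, as in Lemma~\ref{lem_positive on the Lyap}) every equilibrium state of $-t\log|Df|$ has strictly positive Lyapunov exponent; plugging an ergodic component of such a state into the displayed inequality, and using the tree-pressure identity (Lemma~\ref{lem_tree pressure}) together with Lemma~\ref{lem_uppersemi}, forces $P(t)>\sup_{J(f)}\tfrac1nS_{n}(-t\log|Df|)$ for some $n\ge1$, which is hyperbolicity.

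Next I would verify the hypotheses of Ruelle's Fact~\ref{fac_Ruelle's result} for the weight $|Df|^{-t}$ with $t<0$. Write $|Df|^{-t}=|u|$ with $u:=(f')^{-t}$ defined (with an arbitrary continuous branch) on a neighbourhood of $J(f)$, only the modulus mattering. Since $-t>0$ and each critical point of $f$ has finite order, the second distributional derivatives of $|Df|^{-t}$ are locally finite complex measures near the critical points (and $|Df|^{-t}$ is manifestly $C^{1}$, indeed real analytic, away from them), so $|Df|^{-t}\in BV_{2}$; moreover $|Df|^{-t}$ vanishes at every critical point of $f$ to a positive power, which is precisely the integrability condition of \cite[Coro 6.3]{Rue92}. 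Combined with the hyperbolicity established above, Fact~\ref{fac_Ruelle's result} yields that the transfer operator $\cL$ with weight $|Df|^{-t}$ is bounded on $BV_{2}$ and has a spectral gap there.

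From the spectral gap the dynamical conclusions follow by the standard Ruelle--Perron--Frobenius argument, exactly as in the proof of Proposition~\ref{prop_mar-smi_nonexceptional}: the leading eigenvalue of $\cL$ is simple and equal to $\exp(P(t))$, with a strictly positive eigenfunction and a conformal eigenmeasure, producing a unique equilibrium state $\mu_{t}$, fully supported on $J(f)$ and exponentially mixing. This $\mu_{t}$ is non-atomic, for otherwise it would be carried by a repelling cycle and the strict inequality in the displayed formula above (taken with $\mu=\mu_{t}$) would contradict the tree-pressure identity at a point of that cycle; consequently $P(t)=\cF_{\mu_{t}}(f,-t\log|Df|)\le\widetilde P(t)\le P(t)$, so $P(t)=\widetilde P(t)$. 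Real analyticity of $P$ on $(-\infty,0)$ then comes from analytic perturbation theory: $\epsilon\mapsto|Df|^{-(t+\epsilon)}$ is real analytic into $BV_{2}$, and both hyperbolicity and the integrability condition persist on a neighbourhood of any fixed $t<0$, so the leading eigenvalue --- hence $P$ --- varies real-analytically. The only genuinely delicate steps are the two verifications fed into Fact~\ref{fac_Ruelle's result}: that $|Df|^{-t}$ meets the $BV_{2}$ and integrability requirements near the critical points (where the finite critical order is essential), and that non-exceptionality upgrades, via the Key Lemma, to hyperbolicity of $-t\log|Df|$; everything downstream of Fact~\ref{fac_Ruelle's result} is routine.
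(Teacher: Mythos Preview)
Your proposal is correct and follows essentially the same route as the paper's own proof: establish hyperbolicity of $-t\log|Df|$ via the Key Lemma and Proposition~\ref{prop_hyperbolic and conformal measure} (with $G=\log|Df|$ since non-exceptionality forces $\Sigma_{\max}=\emptyset$ and hence $h\equiv 0$), check that the weight $|Df|^{-t}$ meets Ruelle's $BV_{2}$ and integrability hypotheses, and then invoke Fact~\ref{fac_Ruelle's result}. The paper's version is considerably terser---it simply cites these ingredients and defers the integrability verification to \cite[\S1]{MakSmi00}---whereas you have spelled out the downstream spectral-gap consequences (uniqueness, non-atomicity, $P=\widetilde P$, real analyticity) in more detail than the paper does.
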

\begin{proof}
Using Key Lemma and Proposition \ref{prop_hyperbolic and conformal
measure}, the non-exceptionality hypothesis implies that
$-t\log|Df|$ is hyperbolic for every $t<0$. On the other side,
the weight
$|Df|^{-t}$ actually satisfies the integrability condition, see for example \cite[\S 1]{MakSmi00}.
Therefore, Fact \ref{fac_Ruelle's result} directly yields the
statement of Corollary \ref{coro_complex}.
\end{proof}
\begin{rema}
Compared to the proof of \cite[Theo 3.1]{MakSmi00}, this is a
simpler proof in particular on the validity of the uniqueness of the
equilibrium state. In fact, due to the hyerbolicity, the uniqueness
is a directly consequence of the standard arguments from
\cite{HR92,HK82,Prz90}.
\end{rema}

\medskip

However, the methods in the proof of Corollary \ref{coro_complex}
seem not sufficiently strong to deal with exceptional rational maps.
Let us explain the obstacles more precisely as follows. Given a
rational map $f$ on the Riemann space, denote by $k(c)$ the
multiplicity of its critical point at $c$. Following from the
discussion in \cite[\S4]{MakSmi00} (or \S\ref{sec_exceptional}),
there exist a lower semi-continuous function $h$, which only has log
poles in $\Sigma_{\max}$, and a number $\widetilde{\kappa}>0$ with
$$
\frac{\widetilde{\kappa}}{1-\widetilde{\kappa}}:=\min\{k(c):c\in
f^{-1}(a)\cap\mbox{Crit}(f),~a\in Per(f)\cap\Sigma_{\max}\},
$$
so that for each $t<0$, the weight $|Df|^{-t}$ can be replaced with
a homologous weight
$$
G_{\widetilde{\kappa},t}:=|Df|^{-t}\left(\frac{h\circ
f}{h}\right)^{\widetilde{\kappa} t},
$$
and meanwhile $\log G_{\widetilde{\kappa},t}$ belongs to $\cU$ and
is non-exceptional. Therefore, our Key Lemma yields that $\log
G_{\widetilde{\kappa},t}$ is also hyperbolic.

Unfortunately, the potential $\log G_{\widetilde{\kappa},t}$ is
beyond the hypothesis of Fact \ref{fac_Ruelle's result}. We state
the reasons as follows.  Following from the construction, the new
potential $G_{\widetilde{\kappa},t}$ will not vanish at least one
critical point. This means the integrability condition in Fact
\ref{fac_Ruelle's result} is never satisfied, thus one cannot apply
Fact \ref{fac_Ruelle's result} to $G_{\widetilde{\kappa},t}$ to get
a spectral gap.

As a comparison, recall that we use Lemma \ref{Lem_spectral gap} on
the Keller's space to create spectral gap for the interval maps, and
the hypothesis of Lemma \ref{Lem_spectral gap} don't require an
analogous integrability condition. So we don't have this obstacles
in the real setting. This is the fundamental difference between real
and complex setting, which illustrates the power of Keller's spaces.

The above discussions naturally lead to the following problem.

\begin{prob}\label{prob_2}
Given a rational map $f$ on the Riemann sphere, and a hyperbolic
upper semi-continuous potential $\log |u|$ in $\cU$, find a Banach
space on which the transfer operator $\cL_{|u|}$ acts with a
spectral gap.
\end{prob}

\medskip

The difficulty will be of course in the situation where $u$ does not
vanish at every critical point of $f$, and it is plausible to expect
such Banach spaces have analogous properties to the Keller's spaces
for complex situation. This space seems to be a generalization of
$BV_{2}$, and contains some Sobolev spaces $W_{1,p}(\hat{\C})$ with
the number $p>2$, and sufficiently closed to 2. For example, it
might be the space of complex functions on the Riemann sphere of
$p$-bounded variation.

\medskip

Positive outcomes of Problem \ref{prob_2} will yield a complex
version of Theorem \ref{theo_pressurefunction} and Theorem
\ref{theo_phasetransition}. What is more important, it will provide
a new (and perhaps simpler) proof of the orginal Makarov-Smirnov's
formalism for rational maps. To be more precise, the method deduced
from Problem \ref{prob_2} will directly prove that the transfer
operator $\cL_{G_{\widetilde{\kappa},t}}$ has a spectral gap. In
comparison, Makarov and Smirnov in \cite{MakSmi00} use the spectrums
of a sequence of transfer operators $\{\cL_{G_{\kappa,t}}\}$ with
$\kappa<\widetilde{\kappa}$ to approximate the spectrum of the
transfer operator $\cL_{G_{\widetilde{\kappa},t}}$. Actually, these
authors need to consider $\cL_{G_{\kappa,t}}$ acting on the Sobolev
spaces instead of applying Fact \ref{fac_Ruelle's result}, since
every potential $\log G_{\kappa,t}$ is no longer hyperbolic
(although every $G_{\kappa,t}$ satisfies the integrability
condition).

%
%

%
%


\section{Appendix A. Keller's space and spectral gaps}\label{app_keller's space}
This appendix provides some basic ideas/background from \cite{Kel85}
and \cite{LiRiv13b} on the Keller's space which are required in the
proof of Proposition \ref{prop_mar-smi_nonexceptional}. 

%
%
%
%

\subsection{Keller's space} Let $X$ be a
compact subset of $\R$ and $m$ be a Borel non-atomic probability
measure on $X$. We consider the quotient space on the space of
complex valued functions taking values on $X$, defined by agreement
on a set of full measure with respect to $m$.

Denote by $d$ the
pseudo-distance on $X$ defined by
$$
d(x,y):=m(\{z\in X:~x\leq z\leq y~\mbox{or}~y\leq z\leq x\}).
$$
Note for every $x\in X$ and every $\epsilon>0$, the set of ball
$$
B(x,\epsilon):=\{y\in X, d(x,y)<\epsilon\}
$$
has positive measure with respect to $m$.

Given a measurable function $h:X\to\C$ and $\epsilon>0$, for each $x\in X$, let
$$
\mbox{osc}(h,\epsilon,x):=\mbox{ess-sup}\{|h(y')-h(y)|:y,y'\in
B(x,\epsilon)\}
$$
and
$$
\mbox{osc}_{1}(h,\epsilon):=\int_{X}\mbox{osc}(h,x,\epsilon)dm(x).
$$
Given $A>0$, and for each $\alpha\in(0,1]$ and each $h:X\to \C$, put
\begin{equation}\label{equ_norm of keller's space}
\mbox{var}_{\alpha,1}(h):=\sup\limits_{\epsilon\in(0,A]}\frac{\mbox{osc}_{1}(h,\epsilon)}{\epsilon^{\alpha}}~~\mbox{and}~~
||h||_{\alpha,1}:=||h||_{1}+\mbox{var}_{\alpha,1}(h).
\end{equation}
Let
\begin{equation}\label{equ_keller's space}
H^{\alpha,1}(m):=\{\mbox{m-equivalence class of functions}~h:X\to\C,
||h||_{\alpha,1}<+\infty\}.
\end{equation}
We remark here $\mbox{var}_{\alpha,1}(h)$ and $||h||_{\alpha,1}$
only depend on the equivalence class of $h$, and
$(H^{\alpha,1},||\cdot||_{\alpha,1})$ is a Banach space.

\medskip

\subsection{Transfer operator}
Fix $p\geq1$. A function $h:X\to \C$ is of \emph{bounded
p-variation}, if
\begin{equation*}\label{equ_p-variation}
    \sup\left\{\left\{\sum_{i=1}^{k}|h(x_{i})-h(x_{i-1})|^{p}\right\}^{\frac{1}{p}}:k\leq1,~~x_{0}<\cdots<x_{k}\in
    X\right\}<+\infty.
\end{equation*}

Given $g:X\to [0,+\infty)$ to be a function of bounded
$p-$variation, let the \emph{transfer operator} $\cL_{g}$ be the
operator acting on the space
$$
\mbox{Eb}(X):=\{h:X\to \C,~|h|<+\infty\},
$$
according to the formula
\begin{equation}\label{equ_transfer operator}
    \cL_{g}(h)(x):=\sum_{y\in f^{-1}(x)}g(y)h(y).
\end{equation}
Such $g$ is also called the \emph{weight function}, or simply
\emph{weight}.

\medskip

\subsection{Spectral gap theorem}
Given an interval map $f\in\sA$, if the potential $\log g$ in $\cU$
is hyperbolic, then Keller's spaces are appropriate for the
corresponding transfer operation $\cL_{g}$ on which it admits a
quasi-compactness property. We state below more explicitly.

\medskip

\begin{lemm}\cite[Coro4.4]{LiRiv13b}\label{Lem_spectral gap}
Let $f:J(f)\to J(f)$ be an interval map in $\sA$, $g:J(f)\to
[0,+\infty)$ be a weight function of bounded $p$-variation, and
$\cL_{g}$ be the transfer operator defined in \eqref{equ_transfer
operator}. Suppose that there is an integer $n\geq1$ such that the
function
\begin{equation}\label{equ_iternationss}
g_{n}(x):=g(x)\cdots\cdots g(T^{n-1}(x))
\end{equation}
satisfies $\sup_{J(f)}g_{n}<1$. Suppose also $f$ admits a $g^{-1}$-
conformal measure $m$. Then we have the follow properties:
\begin{enumerate}
  \item The number 1 is an eigenvalue of $\cL_{g}$ of algebraic
  multiplicity 1. Moreover, there are constant $A>0$ and $\rho\in (0,1)$, such that the
  spectrum of $\cL_{g}|_{H^{1/p,1}(m)}$ is contained in
  $B(0,\rho)\cup\{1\}$;
  \item There exists a unique equilibrium state $\mu$ of $f$ for the
  potential $\log g$, and moreover $\mu\ll m$.
  \item There is a constant $C>0$, such that for every bounded
  measurable function $\varphi:X\to\C$, and every function $\psi\in
  H^{1/p,1}(m)$, the equilibrium state $\mu$ has the decay of correlation
  $$
C_{n}(\varphi,\psi)\leq
C||\varphi||_{\infty}||\psi||_{1/p,1}\rho^{n},~~\forall n\geq1;
  $$
  \item Given $\psi\in H^{1/p,1}(m)$, for each $\tau\in\C$ the
  operator $\cL_{\tau}$ defined by
  $$
\cL_{\tau}(h):=\cL_{g}(\exp(\tau\psi)\cdot h)
  $$
  is invariant under $H^{1/p,1}(m)$, and the restriction
  $\cL_{\tau}|_{H^{1/p,1}}(m)$ is bounded. Moreover, the map
  $\tau\mapsto\cL_{\tau}|_{H^{1/p,1}(m)}$ is real analytic in the
  sense of Kato on $\C$, and the spectral radius of
  $\L_{\tau}|_{H^{1/p,1}(m)}$ depends on a real analytic way on
  $\tau$ on a neighborhood of $\tau=0.$
\end{enumerate}
\end{lemm}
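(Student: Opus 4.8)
The plan is to follow the classical Lasota--Yorke / Ionescu-Tulcea--Marinescu scheme adapted to Keller's oscillation spaces, in the spirit of \cite{Kel85} (this is the route of \cite{LiRiv13b}); the whole argument is driven by a single \emph{Doeblin--Fortet inequality} together with a positivity/conformality fact.

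First I would check that $\cL_g$ maps $H^{1/p,1}(m)$ boundedly into itself and establish a Lasota--Yorke inequality of the form $\mathrm{var}_{1/p,1}(\cL_g^n h)\le \theta^n\,\mathrm{var}_{1/p,1}(h)+C_n\|h\|_{1}$ with $\theta<1$. Writing $\cL_g^n h(x)=\sum_{y\in f^{-n}(x)}g_n(y)h(y)$, one decomposes $\mathrm{osc}(\cL_g^n h,\epsilon,x)$ into three contributions: (i) a main term, the oscillation of $h$ transported along the inverse branches and weighted by $g_n$, which is contracted by the factor $\sup_{J(f)}g_n<1$ (after, if necessary, passing to a further iterate and using the bounded $p$-variation of $g$ to control the distortion of $g_n$ on pull-backs); (ii) the oscillation of the weight $g_n$ itself over $B(x,\epsilon)$, weighted by $|h|$; and (iii) a \emph{boundary term} accounting for inverse branches that are born or die as the base point ranges over $B(x,\epsilon)$ --- the genuinely one-dimensional difficulty, caused by interval maps not being open. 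Integrating in $x$ against $m$ and using that $m$ is $g^{-1}$-conformal turns (ii) and (iii) into $O(\epsilon^{1/p}\mathrm{var}_{1/p,1}(h))+O(\|h\|_1)$, where the shrinking estimates for pull-backs (Lemma \ref{lem_pullback}) bound the width of the boundary pieces.

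Next I would extract the spectral picture. Since $m$ is $g^{-1}$-conformal, $\cL_g^{*}m=m$ and, by positivity, $\|\cL_g^n\|_{L^1(m)}\le 1$; together with the Lasota--Yorke bound and the compact embedding $H^{1/p,1}(m)\hookrightarrow L^1(m)$, the Ionescu-Tulcea--Marinescu / Hennion criterion gives quasi-compactness of $\cL_g$ on $H^{1/p,1}(m)$ with spectral radius $1$ and essential spectral radius strictly smaller. The peripheral spectrum is then pinned down by the standard mixing argument: a modulus-one eigenfunction $h$ satisfies $|h|\le\cL_g|h|$ with $L^1$-equality, hence $\cL_g|h|=|h|$, and topological exactness of $f$ on $J(f)$ forces this fixed function to be bounded away from $0$ and $\infty$ and the only peripheral eigenvalue to be $1$, algebraically simple. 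This yields (1); taking $\mu$ equal to the fixed density times $m$ and verifying $f$-invariance and the variational identity for $\log g$ yields the unique equilibrium state of (2), with $\mu\ll m$. Statement (3) is then formal: the gap gives $\cL_g^n=\Pi+R^n$ with $\Pi$ the rank-one projection onto the eigenline and $\|R^n\|_{1/p,1}\le C\rho^n$, and rewriting $C_n(\varphi,\psi)$ through iterates of $\cL_g$ gives the exponential bound. For (4), multiplication by $e^{\tau\psi}$ preserves $H^{1/p,1}(m)$ and depends analytically on $\tau$, so $\tau\mapsto\cL_{\tau}$ is analytic into the bounded operators on $H^{1/p,1}(m)$, and Kato's perturbation theory makes the isolated simple eigenvalue near $1$, hence the spectral radius, depend analytically on $\tau$ near $0$.

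I expect the real obstacle to be the first step, and within it the boundary term (iii): controlling the extra oscillation produced by inverse branches not defined on all of $B(x,\epsilon)$ is exactly where non-openness of interval maps bites, and one must combine the shrinking of pull-backs, the $p$-variation of $g$ and the conformality of $m$ to keep that contribution inside $\epsilon^{1/p}\,\mathrm{var}_{1/p,1}(h)+C\|h\|_1$ rather than letting it blow up. Once the Lasota--Yorke inequality is in hand, everything downstream --- quasi-compactness, identification of the peripheral spectrum, decay of correlations, and the Kato perturbation --- is routine.
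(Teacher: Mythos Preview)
The paper does not supply its own proof of this lemma: it is quoted verbatim as \cite[Coro~4.4]{LiRiv13b} and treated as background in the appendix, so there is no ``paper's proof'' to compare against. Your outline is the standard Keller/Lasota--Yorke route and matches the approach of \cite{Kel85,LiRiv13b} that the paper is citing, so in that sense your proposal is aligned with the intended source.
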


\medskip

\begin{small}

    \noindent \textsc{Yiwei Zhang} \quad \email{\href{mailto:yzhang@mat.puc.cl}{yzhang@mat.puc.cl}}

    \smallskip

    \noindent \textsc{Facultad de Matem\'aticas, Pontificia Universidad Cat\'olica de Chile. Avenida Vicu\~na Mackenna 4860, Santiago, Chile}

\end{small}


\begin{thebibliography}{LRL13b}



\bibitem[Bow75]{Bow75}
R.~Bowen.
\newblock {Equilibrium states and the ergodic theory of Anosov
diffeomorphisms}.
\newblock{Lecture notes in Mathematics (volume 470)}. Springer, Berlin. 1975.

\bibitem[BK98]{BK98}
H.~Bruin and G.~Keller.
\newblock{Equlibrium states for $S$-unimodal maps}.
\newblock{\em Ergodic Theory Dynam. Systems} 18(4):765--789, 1998.


\bibitem[dMvS93]{dMvS93}
W. de Melo and S. van Strien.
\newblock{One dimensional dynamics}.
\newblock{Springer-Verlag, Berlin}, 1993.


\bibitem[DPU96]{DPU96}
M.~Denker, F.~Przytycki and M.~Urba\'nski.
\newblock{On the transfer operator for rational functions on the Riemann
sphere}.
\newblock{Ergodic Theory Dynam. Systems}, 16(2):255--266,1996.

\bibitem[DU91a]{DU91a}
M.~Denker and M.~Urba\'nski.
\newblock{On the existence of conformal measures}.
\newblock{\em Trans. Amer. Math. Soc.}, 328(2):563--587, 1991.


\bibitem[DU91b]{DU91b}
M.~Denker and M.~Unba\'nski.
\newblock{Ergodic theory of equilibrium states for rational maps}.
\newblock{\em Nonlinearity}, 4, 103--134, 1991.

\bibitem[DH93]{DouHub93}
A.~Douady and J.~H. Hubbard.
\newblock {A proof of {T}hurston's topological characterization of rational
functions}.
\newblock {\em Acta Math.}, 171:263--297, 1993.

\bibitem[Dob13a]{Dob13a}
N.~Dobbs.
\newblock {On cusps and flat tops}.
\newblock {\em Annales de l'institut Fourier} 64(2):571--605, 2014. 


\bibitem[Dob13b]{Dob13b}
N.~Dobbs.
\newblock{Pesin Theory and equilibrium states on the interval.}
\newblock{arxiv.org/pdf/1304.3305.}


\bibitem[FLM83]{FLM83}
A.~Freire, A.~Lopes and R.~Ma\~{n}e.
\newblock{An invariant measure for rational maps}.
\newblock{Bol. Soc. Brazil. Mat.,} 14, 45--62,1983.


\bibitem[HR92]{HR92}
N.~Haydn and D.~Ruelle.
\newblock{Equivalence of Gibbs and equilibrium states for homeomorphisms satisfying expansiveness and
specification}.
\newblock{\em Commun. Math. Phys.} 148(1):155--167, 1992.

\bibitem[HK82]{HK82}
F.~Hofbauer and G.~Keller.
\newblock{Equilibirum states for piecewise monotonic
transformations}.
\newblock{\em Ergodic Thoery Dynam. Systems}, 180(1):119--140, 1982.

\bibitem[Hay99]{Hay99}
N. Haydn
\newblock{Convergence of the transfer operator for rational maps}.
\newblock{\em Ergodic Thoery Dynam. Systems}, 19(3):657--669, 1999.

\bibitem[IT10]{IT10}
G.~Iommi and M.~Todd.
\newblock{Natural equlibrium states for multimodal maps}
\newblock{\em Commun. Math. Phys}. 300(1):65--94, 2010.

\bibitem[IT13]{IT13}
G.~Iommi and M.~Todd.
\newblock{Thermodynamic formalism for interval maps:induceing
schemes}.
\newblock{\em Dyn. Sys.}28(3):354--380, 2013.


\bibitem[IRRL12]{InoRiv12}
I.~Inoquio-Renteria and J.~Rivera-Letelier.
\newblock {A characterization of hyperbolic potentials of rational maps}.
\newblock {\em Bull. Braz. Math. Soc. (N.S.)}, 43(1):99--127, 2012.

\bibitem[GPRRL13]{GPRRL13}
K.~Gelfert, F.~Przytycki, M.~Rams and J.~Rivera-Leterier.
\newblock{\em Lyapunov spectrum for exceptional rational maps.}
\newblock{\em Ann. Acad. Sci. Fenn. Math}. 38:631--656, 2013


\MRbibitem[GPR14]{3114331}{GPR14} K.~Gelfert, F.~Przytycki and
M.~Rams. \newblock{\em Lyapunov spectrum for multimodal maps.}
To appears in Erg. Th. and Dyn. Sys.

\bibitem[Kel85]{Kel85}
G.~Keller.
\newblock{Generalized bounded variation and applications to piecewise monotonic
transfomrations}.
\newblock{\em Z. Wahrsch. Verw. Gebiete},69(3):461--478, 1985.


\MRbibitem[Kel98]{3114331}{Kel98} G.~Keller.
\newblock{\em Equilibrium states in ergodic theory.}
Cambridge University Press. Cambridge. 1998.


\bibitem[LRL13a]{LiRiv13a}
H.~Li and J.~Rivera-Letelier.
\newblock {Equilibrium states of weakly hyperbolic one-dimensional maps for
  Holder potentials}.
\newblock{\em Comm. Math. Phys.},  328(1):397--419, 2013.


\bibitem[LRL13b]{LiRiv13b}
H.~Li and J.~Rivera-Letelier.
\newblock {Equilibrium states of interval maps for hyperbolic potentials}.
\newblock {\em Nonlinearity}, 27(8):1779--1804, 2013.



\bibitem[Li14]{Li14}
H.~Li
\newblock{Equivalent characterizations of hyperbolic H\"older potential for interval
maps}.
\newblock{Proc. Amer. Math. Soc.}, 143(1):2129--2141, 2015.

\bibitem[LY73]{LY73}
A.~Lasota and J.~Y.~Yorke.
\newblock{On the existence of invariant measures for piecewise
monotonic transformations}.
\newblock{Tran. Amer. Math. Soc.}, 186:481--488, 1973.

\bibitem[Man83]{Man83}
R.~Ma\~{n}e.
\newblock{On the uniqueness of the maximizing measure for rational
maps}.
\newblock{Bol. Soc.  Brazil. Mat.,} 14, 27--43, 1983.



\bibitem[MS96]{MakSmi96}
N.~Makarov and S.~Smirnov.
\newblock {Phase transition in subhyperbolic {J}ulia sets}.
\newblock {\em Ergodic Theory Dynam. Systems}, 16(1):125--157, 1996.

\bibitem[MS00]{MakSmi00}
N.~Makarov and S.~Smirnov.
\newblock {On ``thermodynamics'' of rational maps. {I}. {N}egative spectrum}.
\newblock {\em Comm. Math. Phys.}, 211(3):705--743, 2000.

\bibitem[PRL11]{PRL11}
F.~Przytycki and J.~Rivera-Letelier.
\newblock{Nice inducing schemes and the thermodynamics of rational maps.}
\newblock{\em Comm. Math. Phys.}, 301(3):661--707, 2011.

\bibitem[PRL13]{PRL13}
F.~Przytycki and J.~Rivera-Leterlier.
\newblock{Geometric pressure for multimodal maps of interval}.
\newblock{\em arxiv:1405.2443}, 2013.

\bibitem[PRLS04]{PRLS04}
F.~Przytycki and J.~Rivera-Leterlier and S.~Smirnov.
\newblock{Equality of pressure for rational functions.}
\newblock{\em Ergodic Theory Dynam. Systems}, 24(3):891--914, 2004.

\bibitem[Prz90]{Prz90}
F.~Przytycki.
\newblock{On the Perron-Frobenius-Ruelle operator for rational maps on the Riemann sphere and for H\"{o}lder continuous
functions}.
\newblock{\em Bol. Soc. Brzail. Mat.}, 20(2):95--125, 1990.


\MRbibitem[PU11]{3114331}{PU11} F.~Przytycki and M.~Urba\'{n}ski.
\newblock{Conformal fractals: ergodic theory methods.}
\newblock{London Mathematical Society Lecture Note Series}.
Cambridge University Press. Cambridge. 2011.

\bibitem[RL13a]{Riv13a}
J.~Rivera-Letelier.
\newblock Asymptotic expansion of smooth interval maps.
\newblock {\em arXiv:1204.3071}, 2013.

\bibitem[RL13b]{Riv13b}
J.~Rivera-Letelier.
\newblock {On the asymptotic expansion of maps with disconnected julia set}.
\newblock {\em arXiv:1206.2376}, 2013.

\bibitem[RL15]{RL15}
J.~Rivera-Leterlier.
\newblock{Statistical properties of real and complex maps}.
\newblock{Lecture notes}

\bibitem[Rue76]{Rue76}
D.~Ruelle.
\newblock{A measure associated with axiom-A attractors}.
\newblock{\em Amer. J. Math.}, 98(3):619--654, 1976.

\bibitem[Rue92]{Rue92}
D.~Ruelle
\newblock{Spectral properties of a class of operators associated with conformal maps in two
dimensions}.
\newblock{\em Commun. Math. Phys}. 144:537--556, 1992.

\bibitem[Smi99]{Smi99}
S. Smirnov.
\newblock{Notes on Ruelle's theorem}.
\newblock{\em http://www.unige.ch/~smirnov/papers/.}

\bibitem[Sul83]{Sul83}
D.~Sullivan.
\newblock{Conformal dynamical systems}
\newblock{\em Geometric dynamics, Lecture Notes in Mathematics 1007}, 725--752, Springer, Berlin, 1983.


\bibitem[Sin72]{Sin72}
Ja. G. Sinai
\newblock{Gibbs measures in ergodic theory}.
\newblock{\em Uspehi Mat. Nauk.}, 27(4):21--64, 1972.

\bibitem[SUZ11]{SUZ11}
M.~Szostakiewicz, M.~Ura\'nski and A.~Zdunik.
\newblock{Fine inducing and equilibrium measures for rational maps of Riemann sphere.}
\newblock{Preprint}, 2011.


\bibitem[Zha15]{Zh15}
Y.~Zhang.
\newblock {Tree pressure for hyperbolic and non-exceptional upper
  semi-continuous potentials}.
\newblock {\em Arxiv.1502.00840}, 2015.

\end{thebibliography}
\end{document}